\title{Eigenvalues of the Neumann Laplacian in symmetric regions}
\author{ Marcus A. M. Marrocos\footnote{Partially supported by CAPES-Brazil and Fapeam-Brazil} 
\quad Ant\^onio Luiz Pereira\footnote{Partially supported by FAPESP-Brazil 2008/55516-3} }
\newlength{\hchng}
\newlength{\vchng}
\newcommand{\front}[1]{\partial \Omega_{#1}}
\newcommand{\derivada}[3]{\dfrac{\partial^{#1}{#2}}{\partial{#3}^{#1}}}
\newcommand{\nablafront}[1]{\nabla_{\partial\Omega_{#1}}}
\newcommand{\divfront}[1]{div_{\partial{\Omega_{#1}}}}
\newcommand{\umpt}[1]{\stackrel{\cdot}{{#1}}}
\newcommand{\dpt}[1]{\stackrel{\cdot\cdot}{{#1}}}
\newcommand{\R}{\mathbb{R}}
\newtheorem{teo}{Theorem}
\newtheorem{lema}{Lemma}
\newtheorem{coro}{Corollary}
\newtheorem{defi}{Definition}
\newtheorem{conjec}{Conjecture}
\newtheorem{obs}{Remark}
\newtheorem{prop}{Proposition}
\numberwithin{equation}{section}
\newcommand{\intav}[1]{\mathchoice {\mathop{\vrule width 6pt height 3 pt depth  -2.5pt
\kern -8pt \intop}\nolimits_{\kern -6pt#1}} {\mathop{\vrule width
5pt height 3  pt depth -2.6pt \kern -6pt \intop}\nolimits_{#1}}
{\mathop{\vrule width 5pt height 3 pt depth -2.6pt \kern -6pt
\intop}\nolimits_{#1}} {\mathop{\vrule width 5pt height 3 pt depth
-2.6pt \kern -6pt \intop}\nolimits_{#1}}}
\begin{document}
\maketitle

\begin{abstract}
 In this work we are concerned with the multiplicity of the eigenvalues of the Neumann Laplacian in regions of $\mathbb{R}^n$  which are invariant under the natural action of a compact subgroup $G$ of $O(n)$. We   give a partial positive answer (in the Neumann case)  to a conjecture of V. Arnold  \cite{arnold} on the transversality of the transformation given by the Dirichlet integral to  the stratification in the space of quadratic forms according to the multiplicities of the eigenvalues.
 We show, for some classes of subgroups  of $O(N)$ that, generically in the  set of  $G-$invariant,  $\mathcal{C}^2$-regions, the action is irreducible  in each eigenspace   $Ker(\Delta+\lambda)$. These classes include finite subgroups with irreducible representations of dimension not greater than 2 and, in the case  $n=2$,  any compact subgroup of $O(2)$.   We also obtain some partial results for general compact subgroups of $O(n)$.  

\medskip

\noindent \textbf{Keywords:} Laplacian , Neumann boundary condition, symmetric regions, perturbation of the boundary.

\medskip

%\noindent \textbf{AMS Subject Classifications:} 35J60, 35J75,

\end{abstract}

\tableofcontents

\section{Introduction}

 Perturbation of the boundary in boundary value problems have been considered by many authors, from various points of view since the classical works of  J. Hadamard \cite{hadamard} and  J.W.S. Rayleigh \cite{rayleigh}.  We also mention the more recent works \cite{hp,tl,rousselet,simon,sokolowski}. In particular, generic properties for the solutions of boundary value problems have been proved  in    \cite{micheletti-eliptico,micheletti-laplace,teman}.

In \cite{hp}, D. Henry developed a kind of differential calculus where the independent variable is the domain of definition of the differential equation.  In this way, he was able to use standard analytic  tools  such as the Implicit Function Theorem and the Lyapunov-Schimdt method. In the same work, he proved a generalized version of the Transversality Theorem of Thom and Abraham and applied it to obtain generic properties  for the solution of elliptic equations with various boundary conditions.  

 Generic properties for the eigenvalues and eigenfunctions of elliptic problems have  also been investigated by many authors,  among which  we mention   \cite{zp,micheletti-laplace,as,ta,msb,z3,uhlenbeck}. The generic situation for the eigenvalues of elliptic problems in symmetric regions has been specifically considered in  \cite{zp,as,ta,msb,z3}. 

 One can find at least two approaches in the literature to deal with the problem of simplicity of the eigenvalues for elliptic problems: using the expression of the derivatives of the eigenvalues as functions of the domain or the Transversality Theorem. The first method is used, for instance, in   \cite{zp,hp,micheletti-laplace,z3}. A combinations of the two methods is  used in  \cite{as,ta,msb,uhlenbeck}.

If $G$ is a compact subgroup of $O(n)$, we say that a region $\Omega \subset 
 \R^n$ is  \textit{$G$-symmetric} if it is invariant under the natural action of
 $G$. 
 In \cite{arnold}, V. Arnold conjectured  that the transversality  of the transformation given by the Dirichlet integral to  the stratification in the space of quadratic forms according to the multiplicities of the eigenvalues should  be the generic situation for  the eigenvalues of the Dirichlet Laplacian in symmetric regions.
 Equivalently, in the generic situation, the representation  $\Gamma:G\rightarrow L^2(\Omega)$ given by  $\Gamma_gu=u\circ g^{-1}$  should be irreducible   in the set of regular  bounded $G$-symmetric regions, when restricted to  the eigenspaces  of the Neumann Laplacian. 

 The first partial answer to Arnold's conjecture was given 
 in  \cite{z3}, for  $\mathbb{Z}_3$ symmetric regions.
 In this particular case, there are only two possibilities for the eigenfunctions, they are either symmetric:  $u\circ g^{-1}=u$, or ``anti-symmetric'': $u+u\circ g^{-1}+u\circ (g^2)^{-1}=0$, where  $g\in O(n)$  is a generator of  $\mathbb{Z}_3$.
 Theorem  1.1 of  \cite{z3} states that, generically in the set of 
  $\mathbb{Z}_3$ symmetric regions, the symmetric eigenvalues (that is, whose associated  eigenfunctions are all  symmetric) of the Dirichlet Laplacian are
 all simple, and the ``anti-symmetric'' eigenvalues are all double. However, the author does not take into account the possibility of the existence of eigenvalues with both symmetric and ``anti-symmetric'' eigenfunctions.   

 The complete answer to the question of the  genericity of the eigenvalues of the Dirichlet Laplacian in planar  $\mathbb{Z}_3$-symmetric regions was given  in
 \cite{zp}.  In the same work, the author also considered planar regions with
 $\mathbb{Z}_p$ symmetry for  $p=2,3,4$.

  A detailed investigation of   the generic situation of the eigenvalues of the Dirichlet Laplacian in symmetric regions is done in  \cite{ta} or  \cite{as}. In particular,  conditions  for the existence of multiple eigenvalues on  $G$-symmetric  region  are established for arbitrary compact subgroups of $O(n)$. More precisely, it is shown there  that, if   $G< O(n)$ 
 is compact and  $\Omega$ has a free point under the action  $G$,
 then there always exist multiple eigenvalues, except in the exceptional case 
  $G = \mathbb{Z}_2\oplus...\oplus\mathbb{Z}_2$, (see corollary \ref{cam}).
 The presence of a free point under the action  $G$ guarantees the existence of 
 irreducible sub-representations of  $\Gamma$ for each possible class. As a consequence,  it follows that for each irreducible representation of  $\Gamma$
 there exists an eigenvalue with multiplicity at least equal to the dimension of the sub-representation  (see theorem \ref{tam}).
 Therefore, the best we can hope for is for  the sub-representation 
  $\Gamma|_{ker(\Delta+\lambda)}$ to be 
 irreducible for any eigenvalue  $\lambda$ in
 a generic set of bounded regular $G$-symmetric regions of  $\mathbb{R}^n$.
  
 Indeed, it is shown in \cite{as} that this is true for some classes of finite groups, namely commutative groups and non commutative groups whose irreducible representations have at most dimension 2 (see theorem 7.1 of \cite{as}).
 Though not explicitly stated in \cite{as},  the genericity property follows then for planar regions and arbitrary subgroups of $O(2)$ (see  remark
\ref{planarsimple}). 

 In \cite{as}, \cite{msb} 
  the theory developed by Henry in  \cite{hp}  is also used to prove some generic properties for the eigenvalues of the Dirichlet Laplacian and Bilaplacian on
 symmetric domains, using Henry's Transversality theorem as the main tool. 

 Here, we obtain some partial answers to the Arnold's conjecture for
 the Neumann Laplacian on symmetric regions. More precisely, we consider the problem

  \begin{equation}\label{eln}
                 \left\{
                       \begin{array}{lccc}
                             (\Delta+ \lambda)u=0,&   \ in \ \  \Omega;\\
                              \frac{\partial u}{\partial N}=0, & \ \ on\ \ \partial \Omega.\\
                              
                       \end{array}
                            \right.
\end{equation}                
 Following the formulation of \cite{as}, we call an eigenvalue 
\textit{ $G$-simple} if  the action  $\Gamma|_{ker(\Delta+\lambda)}$ is irreducible and investigate the validity of the following

\begin{conjec}\label{conject1}
Let  $G$ be a compact subgroup of  $O(n)$. Then, in a residual set of bounded, regular $G$-symmetric  regions  of  $\mathbb{R}^n$  the eigenvalues of the Neumann Laplacian are all  $G$-simple. 
\end{conjec}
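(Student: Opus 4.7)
The plan is to reformulate the conjecture as a transversality statement in Henry's boundary-perturbation framework and then apply a suitable version of the Transversality Theorem. Fix a base $G$-symmetric region $\Omega_0$ of class $\mathcal{C}^2$. The space of $G$-symmetric perturbations will be modeled by the open set of $\mathcal{C}^2$ diffeomorphisms $h\colon \Omega_0 \to \R^n$ intertwining the action of $G$, i.e.\ $h\circ g = g\circ h$ for all $g\in G$; setting $\Omega_h = h(\Omega_0)$ gives a Banach manifold of $G$-symmetric regions near $\Omega_0$. Pulling back the Neumann problem \eqref{eln} by $h$ produces a smooth family of self-adjoint operators $A(h)$ on the fixed Hilbert space $L^2(\Omega_0)$, each commuting with the unitary representation $\Gamma$ of $G$.

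Near a given Neumann eigenvalue $\lambda_0$ with $G$-invariant eigenspace $E_0 = \ker(\Delta+\lambda_0)$, analytic (Lyapunov-Schmidt) perturbation yields a smooth map $Q\colon h\mapsto Q(h)\in \mathrm{Sym}^G(E_0)$ into the finite-dimensional real vector space of $G$-equivariant symmetric endomorphisms of $E_0$, whose eigenvalues are exactly the perturbed eigenvalues of $A(h)$ near $\lambda_0$. By Schur's lemma $\mathrm{Sym}^G(E_0)$ decomposes along the isotypic components of $E_0$, and the non-$G$-simple configurations form a proper real-algebraic subvariety $\Sigma\subset \mathrm{Sym}^G(E_0)$, naturally stratified by the possible reducible splittings. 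By Henry's generalized Transversality Theorem, it will suffice to show that for every $h$ realizing a non-$G$-simple eigenspace the differential $dQ(h)$ is surjective onto $\mathrm{Sym}^G(E_0)$; summing over the countably many eigenvalues and finitely many strata then produces a residual set of $\mathcal{C}^2$ $G$-symmetric regions on which every Neumann eigenvalue is $G$-simple.

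The derivative $dQ(h)$ will be computed from the Hadamard-type formula for Neumann boundary perturbations: for $u,v\in E_0$ and a $G$-equivariant vector field $V$ along $\partial\Omega_0$,
\begin{equation*}
\bigl(dQ(h_0)\cdot V\bigr)(u,v) \;=\; \int_{\partial\Omega_0}\langle V,N\rangle\bigl(\nabla u\cdot\nabla v - \lambda_0\, uv\bigr)\, d\sigma,
\end{equation*}
where $N$ is the outward unit normal. Only the $G$-invariant normal component $\varphi=\langle V,N\rangle \in \mathcal{C}^2(\partial\Omega_0)^G$ intervenes, so the transversality problem collapses to the purely analytic question of whether the bilinear pairing
$\varphi\mapsto \bigl((u,v)\mapsto \int_{\partial\Omega_0}\varphi\,(\nabla u\cdot\nabla v-\lambda_0\, uv)\,d\sigma\bigr)$
is surjective from $\mathcal{C}^2(\partial\Omega_0)^G$ onto $\mathrm{Sym}^G(E_0)$.

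The main obstacle is verifying this surjectivity for an arbitrary compact $G$. By duality, failure would give a nonzero $G$-equivariant $B=(B_{ij})$ such that the $G$-averaged boundary function $\sum_{i,j}B_{ij}\bigl(\nabla u_i\cdot\nabla u_j-\lambda_0\, u_iu_j\bigr)$ vanishes identically on $\partial\Omega_0$; combined with boundary unique continuation for the Neumann problem and the isotypic decomposition of $E_0$, this should be excluded on a sufficiently rich class of base regions $\Omega_0$. When every irreducible subrepresentation of $G$ on $E_0$ has dimension at most $2$, the space $\mathrm{Sym}^G$ on each isotypic component is low-dimensional and equivariance rigidifies $B$ enough to close the argument; this recovers the classes of subgroups announced in the abstract, and for $n=2$ reduces to the known classification of closed subgroups of $O(2)$. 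The genuine difficulty, and the reason the full conjecture is likely to remain open, is that when $G$ admits irreducible representations of dimension $\geq 3$ the space $\mathrm{Sym}^G$ on a single isotypic component is no longer one-dimensional, so the boundary pairing above may fail to reach every equivariant form and additional tools beyond the transversality scheme sketched here would be required.
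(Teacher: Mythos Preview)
The statement you are addressing is labeled as a \emph{conjecture} in the paper, and the paper does not claim a proof in full generality; it establishes only the partial cases (finite $G$ with $d_\sigma\le 2$, and hence all compact $G<O(2)$). So a complete proof of Conjecture~\ref{conject1} is not the target, and the relevant comparison is whether your scheme recovers the partial results the paper actually proves.

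Here there is a genuine gap. Your entire argument hinges on the surjectivity of the first variation
\[
\varphi\;\longmapsto\;\Bigl((u,v)\mapsto\int_{\partial\Omega_0}\varphi\,(\nabla u\cdot\nabla v-\lambda_0\,uv)\,d\sigma\Bigr)
\]
from $G$-invariant boundary functions onto $\mathrm{Sym}^G(E_0)$, and you propose to rule out a nontrivial annihilator $B$ by ``boundary unique continuation''. The paper shows explicitly that this step \emph{fails} for the Neumann problem. After exploiting the $G$-invariant combinations one obtains exactly the relation
\[
\sum_{j=1}^{d_\sigma}\nabla_{\partial\Omega}\phi_j^i\cdot\nabla_{\partial\Omega}\phi_j^l-\lambda_0\,\phi_j^i\phi_j^l=0\quad\text{on }\partial\Omega,
\]
see \eqref{cpd}, and the authors state plainly that ``even with this new information about the eigenfunctions in the boundary, we could not obtain a contradiction''. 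In other words, the annihilator you say ``should be excluded'' is not excluded by first-order data: Lemma~\ref{boundexp} handles only the very special case $G\cong\mathbb{Z}_2^m$, and no unique-continuation argument from $u=\partial u/\partial N=0$ is available because Neumann eigenfunctions do \emph{not} vanish on the boundary.

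What the paper actually does to close the argument, even in the $d_\sigma\le 2$ cases, lies entirely outside your sketch: it computes the \emph{second} derivative $\stackrel{\circ\circ}{M}$ of the eigenvalue curves (Corollary~\ref{ed}), interprets the resulting identity as saying that a certain boundary operator $\Xi$ (resp.\ $\Phi$) built from the linearized eigenfunctions $\dot\phi_j^i$ has finite rank, and then applies the method of rapidly oscillating test functions (Section~\ref{OF}, Theorems~\ref{tofln1} and \ref{tofln3}) to force $\sum_j(\partial_\tau\phi_j^1)(\partial_\tau\phi_j^2)=0$ for every tangential direction $\tau$. Only after this second-order step does one get enough boundary information to invoke Cauchy uniqueness. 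Your transversality scheme, which stops at $dQ$, misses this mechanism entirely; this is precisely the point at which the Neumann case diverges from the Dirichlet analysis in \cite{as,ta}, as the paper emphasizes at the start of Section~\ref{secfiniteg}.
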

 
 The representation $\Gamma$, which will be called here the \textit{quasi-regular representation} of  $G$  in  $L^2(\Omega)$, induces an orthogonal decomposition in the space   $L^2(\Omega)$ (see theorem  \ref{td}), that is  
$$
L^2(\Omega)=\displaystyle\bigoplus_{\sigma\in\hat{G}}M_{\sigma},
$$
where each subspace  $M_{\sigma}$ is invariant by the Laplacian operator (see proposition \ref{pi}). These spaces will be called \textit{symmetry spaces}.

%%%%%%%%%%%%%%%%%%%%%%%%%%%%%%%%%%
%%%%%%%%%%%%%%%%%%%%%%%%%%%%%%%%%%

%Supondo que $\Omega$ possua um ponto livre, A.L.Pereira \cite{as}, \cite{ta} mostrou que em regiões $\Omega$ $G$-simétricas sempre existem autovalores múltiplos para o Laplaciano, exceto para subgrupos $G$ isomorfos a $\mathbb{Z}_2\oplus...\oplus\mathbb{Z}_2$ ($m$ vezes). Mais precisamente, os autovalores possuem multiplicidade tão grande quanto forem as dimensões das representações irredutíveis de $G$.

%%%%%%%%%%%%%%%%%%%%%%%%%%%%%%%%%%
%%%%%%%%%%%%%%%%%%%%%%%%%%%%%%%%%%

 The conjecture \ref{conject1}  can be split in two sub-conjectures: 

\begin{description}
	\item{(I)}  In a residual set of  $G$-symmetric regions of
 $\mathbb{R}^n$, the representation $\Gamma$ of  $G$ in  $Ker(\Delta+\lambda)\cap M_{\sigma}$  is irreducible, for each  $\sigma\in \hat{G}$.
	\item{(II)}  In a residual set of  $G$-symmetric regions of
 $\mathbb{R}^n$, there are no eigenvalues with eigenfunctions belonging to two different  symmetry spaces.  
\end{description}

 In fact  we analyze here the validity of conjecture \ref{conject1} only for
\textit{finite groups}. The case of  infinite groups presents  additional technical difficulties and will be consider in a forthcoming paper.

 In what follows, we will say that an eigenvalue  $\lambda$ of the Laplacian restricted   to   $M_{\sigma}$  is  $G_{\sigma}$-simple if the quasi-regular representation $\Gamma$ of $G$ in  $Ker(\Delta+\lambda)\cap M_{\sigma}$ is irreducible.

 Theorem 1 of \cite{z3} proves then that, generically in the set of bounded   $\mathbb{Z}_3$-symmetric regions of  $\mathbb{R}^n$ the eigenvalues of the Dirichlet Laplacian are all  $G_{\sigma}$-simple. 
 
 We   show the validity of sub-conjecture I, for any finite subgroup of   $O(n)$ (see corollary \ref{cp1})
 that is all eigenvalues of the Neumann Laplacian are  $G_{\sigma}$-simple.
 % This result represents a major advancement in an attempt to get the proof of the conjecture considering %any compact groups.
%  We also considered infinite subgroups but in that case an extra technical condition ( $dim G<n-1$) (see \ref{cp1i}).
 %This result is an important first step to the proof of the whole conjecture \ref{conject1} for infinite subgroups.
 The main result of this work is  that \ref{conject1} is true for finite subgroups with irreducible representations of dimension at most 2. As a corollary, we obtain a proof of the conjecture for arbitrary subgroups of   $O(2)$  in planar regions.

\section{Preliminaries}\label{Pre}

%%%%%%%%%%%%%%%%%%%%%%%%%%%%%%%%%%%%%%%%%%%%%%%%%%%
%%%%%%%%%%%%%%%%%%%%%%%%%%%%%%%%%%%%%%%%%%%%%%%%%%%

  In this section we present some results  on boundary perturbations that will be needed in the sequel. More details and proofs can be found in \cite{hp}.
 
\subsection{Definitions and preliminary results}
 
% We first fix some notation and give some definitions to be used along the work.

We represent a point  $x\in\mathbb{R}^n$ as a  $n$-uple of real numbers   $x=(x_1,...,x_n)$ and use the multi-index notation for the partial derivatives.
$$
\partial_x^{\alpha}=\left(\frac{\partial}{\partial x}\right)^{\alpha}=\frac{\partial^{\alpha_1}}{\partial x_1^{\alpha_1}}\frac{\partial^{\alpha_2}}{\partial x_2^{\alpha_2}}...\frac{\partial^{\alpha_n}}{\partial x_n^{\alpha_n}}
$$
where $\alpha=(\alpha_1,...\alpha_n)\in\mathbb{N}$, $|\alpha|=\alpha_1+\alpha_2+...+\alpha_n$.  Partial derivatives will also be denoted by 
$$
D_i=\frac{\partial}{\partial x_i}\,\, e \,\, D^{\alpha}=D^{\alpha_1}_1...D^{\alpha_n}_n
$$

 If  $f:\mathbb{R}^n\rightarrow\mathbb{R}$ is   $m$-times differentiable at a point  $x$, its  $m$-th derivative may be considered
 %  either as   the collection of the partial derivatives of order $m$
% $$
% D^mf(x)=\left\{\left(\frac{\partial}{\partial x}\right)^{\alpha}; |\alpha|=m \right\}
% $$
% or 
as  a $m$-linear symmetric form in  $\mathbb{R}^n$
$$
h\mapsto D^mf(x)h^m
$$
with norm
$$
|D^mf(x)|=\displaystyle\max_{|h|\leq1}|D^mf(x)h^m|.
$$

 We denote the boundary of an open subset $\Omega$ of  $\mathbb{R}^n$  by $\partial\Omega$ and its closure by  $\overline{\Omega}$. Given a normed vector space  $E$ we denote by  $\mathcal{C}^m(\Omega,E)$ the space of   $m$-times continuously and bounded differentiable functions  $f:\Omega\rightarrow E$ whose derivatives extend continuously to $\overline{\Omega}$, with norm
$$
||f||_{\mathcal{C}^m(\Omega,E)}= \displaystyle\max_{0\leq j\leq m}\sup_{x\in\Omega}|D^jf(x)|.
$$
If $E=\mathbb{R}$, we denote  $\mathcal{C}^m(\Omega,E)$ simply by  $\mathcal{C}^m(\Omega)$. We also define the subspaces
\begin{itemize}
\item $\mathcal{C}^m_0(\Omega,E)$,  the subspace  of $m$-th continuously differentiable functions with compact support in $\Omega$.
\item $\mathcal{C}^m_{unif}(\Omega,E)$ is the closed subspace of functions in  $\mathcal{C}^m(\Omega,E)$ with $m$-th derivative
 uniformly continuous. 
\item $\mathcal{C}^{m,\alpha}(\Omega,E)$  is the closed subspace of functions in  $\mathcal{C}^m(\Omega,E)$ with  H\"{o}lder continuous $m$-th derivative
 and norm
$$
||f||_{\mathcal{C}^{m,\alpha}(\Omega,E)}=\max\left\{||f||_{\mathcal{C}^m(\Omega,E)}, H^{\Omega}_{\alpha}(D^mf)\right\}
$$
where 
$$
 H^{\Omega}_{\alpha}(f)=\sup\left\{\frac{|f(x)-f(y)|}{|x-y|^{\alpha}}; x\neq y\in\Omega\right\}.
$$
\end{itemize}

We say that an open set   $\Omega\subset\mathbb{R}^n$  is  $\mathcal{C}^m$-regular  or has  $\mathcal{C}^m$-regular boundary if there exists  $\phi\in \mathcal{C}^m(\mathbb{R}^n,\mathbb{R})$, $m\geq 2$  or at least $\mathcal{C}^1_{unif}$, such that 
$$
\Omega=\{x;\phi(x)>0\}
$$
and  $\phi(x)=0$ implies $|\nabla\phi(x)|\geq1$. 

 It is proved in \cite{hp} that, for bounded open sets, the above definition is equivalent to the ones in
  \cite{adn} and  \cite{eb}. 

Besides these spaces of smooth functions, we will frequently work on Sobolev spaces, of which we present some basic definitions below.

Let  $m$ be a non negative integer,  $1\leq p<\infty$ and $\Omega \subset \R^n$ an open bounded set.
 If  $u\in \mathcal{C}^m(\Omega)$ we define the norm
$$
||u||=\left(\int_{\Omega}\sum_{|\alpha|\leq m}|D^{\alpha}u|^{p}dx\right)^{\frac{1}{p}}.
$$
The completion of  $\mathcal{C}^m(\Omega)$ with respect to this norm  is denoted by  $H^{m,p}(\Omega)$.
We also consider  $W^{m,p}(\Omega)$, the space of functions  $m$-th weakly differentiable, whose
 weak derivatives up to order  $m$ belong to  $L^p(\Omega)$. It can be proved that  $W^{p,m}(\Omega)=\mathcal{C}^{p,m}(\Omega)$ when $\Omega$ is  $\mathcal{C}^m$-regular. 
If  $p=2$, we use the notation $H^{m,p}(\Omega)=H^m(\Omega)$.

We also define  $H_0^{m,p}(\Omega)$ as the completion of  $\mathcal{C}^m_0(\Omega)$ and $W_0^{m,p}(\Omega)$ the space of functions in $W^{m,p}(\Omega)$ satisfying $D^{\alpha}u=0$ on $\partial\Omega$ for  $|\alpha|\leq\frac{m}{2}$.

For functions  $\phi$ defined in $\partial \Omega$, we can introduce the class of functions
  $W^{m-\frac{1}{p},p}(\partial\Omega)$  in  such a way that   
  $\phi \in W^{m-\frac{1}{p},p}(\partial\Omega)$  if and only if it is the boundary value of 
 functions in   $v\in W^m(\Omega)$ with norm 
$$
||\phi||=\inf||v||_{W^{m,p}(\Omega)}
$$
where the infimum is taken over all
 $v\in W^m(\Omega)$ such that $v_{|_{\partial\Omega}}=\phi$, where $v_{|\partial\Omega}$ is the trace of $v$ on $\partial\Omega$  (see \cite{lions}).  
%(DEF. ESTRANHA, SERIA PRECISO PROVAR QUE ESTA BEM DEFINIDA. FUNÇÕES EM Lp EM GERAL SÓ ESTÃO DEFINIDAS Q.S.- DAR REFERÊNCIA?). 

We also frequently encounter differential operators on hypersurfaces of
 $\mathbb{R}^n$. 
 
Let  $S$ be a $\mathcal{C}^1$ hypersurface in  $\mathbb{R}^n$ and  $\phi: S\rightarrow\mathbb{R}$ a $\mathcal{C}^1$ functions. The \textit{tangential gradient }
of $\phi$ is the tangent vector field in $S$ such that, for any
  (sufficiently smooth) curve  $x(t)$ in  $S$, we have
$$
\frac{d}{dt}\phi(x(t))=\nabla_{S}\phi(x(t))\cdot\dot{x}(t).
$$

 If  $S$ is of class  $\mathcal{C}^2$ and  $\stackrel{\rightarrow}{a}$   is a  $\mathcal{C}^1$ vector field on  $S$, we define its \textit{tangential divergent} $div_S\stackrel{\rightarrow}{a}:S\rightarrow\mathbb{R}$
 as the unique continuous function in $S$ such that, for any   $\phi :S\rightarrow\mathbb{R}$ of  $\mathcal{C}^1$ with compact support in  $S$
$$
\int_{S}\phi \, div_{S}\stackrel{\rightarrow}{a}=-\int_{S}\stackrel{\rightarrow}{a}\cdot\nabla\phi.
$$

If   $u:S\rightarrow\mathbb{R}$ is  of class $\mathcal{C}^2$ then its
 \textit{tangential Laplacian} is defined by $\Delta_{S}u=div_{S}\nabla_Su$. %or, equivalent, as the continuous  function in $S$ such that, for any %$\phi:S\rightarrow\mathbb{R}$ 
%$$
%\int_{S}\phi\Delta_Su=-\int_{S}\nabla_S\phi\cdot\nabla_Su .
%$$

\begin{teo}\label{tods} .\\
\begin{enumerate}
\item If  $S$ is a  $\mathcal{C}^1$ hypersurface  in $\mathbb{R}^n$ and  $\phi: \mathbb{R}^n\rightarrow\mathbb{R}$ is $\mathcal{C}^1$ in a neighborhood of  $S$, then $\nabla_S\phi(x)$ is the  component  $\nabla\phi$ tangent to $S$ at the point  $x$, that is  
$$
\nabla_{S}\phi=\nabla\phi-N\frac{\partial\phi}{\partial N},
$$ 
where  $N$ is an unit normal field on $S$.

\item If  $S$ is a  $\mathcal{C}^2$ hypersurface, $\stackrel{\rightarrow}{a}:\mathbb{R}^n\rightarrow\mathbb{R}^n$ is  $\mathcal{C}^1$ in a neighborhood of $S$, $N:\mathbb{R}^n\rightarrow\mathbb{R}^n $ is a  $\mathcal{C}^1$ unit-vector field on a neighborhood of $S$, which is a normal field at points of 
 $S$ near  $x_0 \in S$, and  $H =div N$ is the mean curvature of
  $S$ (near $x_0$), then
$$
div_{S}\stackrel{\rightarrow}{a}= div\stackrel{\rightarrow}{a}-H(x)\stackrel{\rightarrow}{a}\cdot N-\frac{\partial}{\partial N}(\stackrel{\rightarrow}{a}\cdot N) 
$$
 $S$ (near $x_0$).

\item  If  $S$ is $\mathcal{C}^2$ hypersurface $u:\mathbb{R}^n\rightarrow\mathbb{R}$ is  $\mathcal{C}^2$ on a neighborhood of $S$, and $N$ is as in 2) above, then
$$
\Delta_{S}u=\Delta u -divN\frac{\partial u}{\partial N}-\frac{\partial^2u}{\partial N^2}+\nabla_Su\cdot\frac{\partial N}{\partial N}.
$$
on $S$ near $x_0$.  We may choose $N$ so that $\frac{\partial N}{\partial N}=0 $ and then the final term is omitted. $\Delta_{S}u$ depends only on the values of $u$ on $S$.
\end{enumerate}
\end{teo}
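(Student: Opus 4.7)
The plan is to prove the three items in sequence, with each item building on the previous.

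For item (1), use the defining property of $\nabla_S\phi$ via $\mathcal{C}^1$ curves on $S$. Fix $x\in S$ and any $\mathcal{C}^1$ curve $\gamma$ in $S$ with $\gamma(0)=x$; the ambient chain rule gives $(\phi\circ\gamma)'(0)=\nabla\phi(x)\cdot\dot\gamma(0)$. Splitting $\nabla\phi=(\nabla\phi-N\,\partial\phi/\partial N)+N\,\partial\phi/\partial N$ and noting that the normal piece is orthogonal to the tangent vector $\dot\gamma(0)$, one obtains the characterizing identity for $\nabla_S\phi$; since $|N|=1$ implies $\nabla\phi-N\,\partial\phi/\partial N$ is itself tangent to $S$ at $x$ (its inner product with $N$ equals $(1-|N|^2)\partial\phi/\partial N=0$), the uniqueness clause in the definition forces the identification.

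For item (2), use the integration-by-parts definition combined with the splitting $\vec a=\vec a_T+(\vec a\cdot N)N$. For the purely normal piece $fN$, the left-hand side of the identity vanishes since $\nabla_S\phi\perp N$, while the product rule $\text{div}(fN)=fH+\partial f/\partial N$ shows the right-hand side also vanishes, so the formula holds trivially. For the tangential piece, choose $N$ as the gradient of the signed distance to $S$ (so $|N|\equiv 1$ and $\partial N/\partial N\equiv 0$ in a tubular neighborhood of $S$) and extend $\vec a_T$ so it remains tangent to all parallel hypersurfaces $\{d(\,\cdot\,,S)=t\}$; then $\vec a_T\cdot N\equiv 0$ in the tube, the correction terms drop out, and the identity $\text{div}_S\vec a_T=\text{div}\,\vec a_T\big|_S$ is verified by applying the classical divergence theorem to $\phi\vec a_T$ on the slab $\{|d(\,\cdot\,,S)|<\varepsilon\}$ and comparing with the definition as $\varepsilon\to 0$.

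For item (3), apply part (2) (with the signed-distance extension $N_0$) to $\vec a=\nabla_S u$. Since $|N_0|=1$ forces $\nabla_S u\cdot N_0\equiv 0$ in the neighborhood, the correction terms in (2) vanish and $\Delta_S u=\text{div}(\nabla_S u)$. Expanding via part (1) and the product rule,
\[
\text{div}(N_0\,\partial u/\partial N_0)=(\partial u/\partial N_0)\,\text{div}\,N_0+\partial^2 u/\partial N_0^2,
\]
so $\Delta_S u=\Delta u-H(\partial u/\partial N)-\partial^2 u/\partial N_0^2$. To rewrite this using a general $|N|=1$ extension of $N$ (for which $\partial N/\partial N$ may be nonzero), note that $\text{div}\,N|_S$ and $\partial u/\partial N|_S$ are extension-independent, while the iterated second normal derivative satisfies $\partial^2 u/\partial N^2|_S=\partial^2 u/\partial N_0^2|_S+\nabla_S u\cdot\partial N/\partial N$ (using $|N|^2=1\Rightarrow N\cdot\partial N/\partial N=0$, which reduces $\nabla u\cdot\partial N/\partial N$ to its tangential part $\nabla_S u\cdot\partial N/\partial N$). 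Substituting yields the stated formula, and the final assertion that $\Delta_S u$ depends only on $u|_S$ is immediate from the intrinsic definition.

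The principal obstacle is item (2): producing the signed-distance extension of $N$ and the parallel-tangent extension of $\vec a_T$ requires the full $\mathcal{C}^2$-regularity of $S$ (so the signed distance is $\mathcal{C}^2$ on a tube), and passing to the $\varepsilon\to 0$ limit in the Gauss--Green identity demands careful control of the lateral-boundary contributions. An intrinsic proof via local parametrizations of $S$ with the induced Riemannian metric is an available alternative but produces bookkeeping with Christoffel symbols that obscures the geometric content.
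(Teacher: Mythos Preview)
The paper does not prove this theorem: it is stated in the preliminaries section with the remark that ``more details and proofs can be found in \cite{hp}'' (Henry's book). So there is no in-paper argument to compare against, and your proposal must be judged on its own merits.

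Your argument is essentially correct and follows a natural route. Item (1) is fine. In item (3), the computation relating $\partial^2 u/\partial N^2$ for a general unit extension $N$ to the signed-distance extension $N_0$ is exactly right: expanding $N\cdot\nabla(N\cdot\nabla u)$ gives $N^T(\mathrm{Hess}\,u)N+(\partial N/\partial N)\cdot\nabla u$, the first term is extension-independent on $S$, and $|N|=1$ forces $\partial N/\partial N\perp N$ so the second term reduces to $\nabla_S u\cdot\partial N/\partial N$.

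The only point that needs tightening is in item (2). You replace the given ambient extension of $\vec a_T$ by one tangent to all parallel hypersurfaces, but the theorem is stated for a \emph{fixed} $\mathcal{C}^1$ extension of $\vec a$. To make the substitution legitimate you must first check that the right-hand side $\mathrm{div}\,\vec a-H(\vec a\cdot N)-\partial(\vec a\cdot N)/\partial N$ is independent of the extension of $\vec a$ off $S$. This is a short computation: if $\vec a-\vec a'$ vanishes on $S$ then each $\nabla(\vec a_i-\vec a_i')$ is normal on $S$, so $\mathrm{div}(\vec a-\vec a')|_S=\partial((\vec a-\vec a')\cdot N)/\partial N|_S$, which exactly cancels the last term. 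Once this is recorded, your slab/coarea argument (with $\phi$ extended constant along normals so there is no lateral boundary contribution) cleanly gives $\mathrm{div}_S\vec a_T=\mathrm{div}\,\vec a_T|_S$ for the convenient extension, and hence for the original one. The last paragraph of your proposal already anticipates this issue; just promote the extension-independence from a remark to an explicit step.
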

%\begin{proof} See \cite{hp}.
 %\end{proof}

\begin{teo}
Let $\Omega\subset\mathbb{R}^n$ be a $\mathcal{C}^2$-regular  domain  $h(t,.)$ a family of diffeomorphisms such that  $\frac{\partial }{\partial t}h(t,x)=V(t,h(t,x))$, $\frac{\partial^2h}{\partial x^2}, \frac{\partial^2h}{\partial t\partial x}$ are continuous and  $V\in \mathcal{C}^2(\mathbb{R}\times\mathbb{R}^n,\mathbb{R}^n)$. If $f:\mathbb{R}\times\mathbb{R}^n\rightarrow\mathbb{R}$ is $\mathcal{C}^1$ then, for small $t$ 
 $t\mapsto\int_{\partial\Omega(t)}f(t,x)dA_x$ is $\mathcal{C}^1$ and
$$
\frac{d}{dt}\int_{\partial\Omega(t)}f(t,x)dA_x=\int_{\partial\Omega(t)}\left(\frac{\partial f}{\partial t}+ V\cdot N\frac{\partial f}{\partial N}+HV\cdot Nf\right)dA_x,
$$
where $N$ is the unit outward  normal $\partial\Omega(t)$ and  $H=div N$.
\end{teo}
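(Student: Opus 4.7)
The plan is to reduce the problem to differentiation on a fixed domain by pulling back the integral via $h(t,\cdot)$. Fix $t_0$; by composing with $h(t_0,\cdot)^{-1}$ and relabelling we may assume $t_0=0$ and $h(0,\cdot)=\mathrm{id}$, so that $\partial\Omega(0)=\partial\Omega$. Writing $J(t,x)$ for the surface-area Jacobian of $h(t,\cdot)\big|_{\partial\Omega}$, the area change of variables gives
\begin{equation*}
\int_{\partial\Omega(t)} f(t,y)\,dA_y \;=\; \int_{\partial\Omega} f(t,h(t,x))\,J(t,x)\,dA_x.
\end{equation*}
The right-hand side is over a fixed hypersurface, and the hypotheses on $h$, $V$ and $f$ force the integrand to be $\mathcal{C}^1$ in $t$ uniformly in $x\in\partial\Omega$, which yields the claimed $\mathcal{C}^1$-regularity of $t\mapsto\int_{\partial\Omega(t)}f\,dA$ and justifies differentiation under the integral sign.

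Differentiating at $t=0$ and using $\partial_t h(0,x)=V(0,x)$ produces
\begin{equation*}
\int_{\partial\Omega}\Bigl[\partial_t f(0,x) + \nabla f(0,x)\cdot V(0,x)\Bigr]dA_x \;+\; \int_{\partial\Omega} f(0,x)\,\partial_t J(0,x)\,dA_x.
\end{equation*}
The key geometric identity to establish is $\partial_t J(0,x) = \mathrm{div}_{\partial\Omega}V(0,x)$, where $\mathrm{div}_{\partial\Omega}$ is the tangential divergence. This follows from the representation $J(t,x)=|\det Dh(t,x)|\cdot\|(Dh(t,x))^{-T}N_0(x)\|$: expanding to first order in $t$, the volume-Jacobian contributes $\mathrm{div}\,V$ while the normal-distortion factor contributes $-\partial_N(V\cdot N)-HV\cdot N+\cdots$, and combining them via part~(2) of Theorem~\ref{tods} collapses exactly to $\mathrm{div}_{\partial\Omega}V$. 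This is the main technical obstacle: being careful with how the unit normal evolves is what makes the bookkeeping delicate.

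To finish, decompose $V=V_T+(V\cdot N)N$ and $\nabla f = \nabla_{\partial\Omega}f+(\partial f/\partial N)N$; then
\begin{equation*}
\nabla f\cdot V = \nabla_{\partial\Omega}f\cdot V_T + \tfrac{\partial f}{\partial N}(V\cdot N),
\qquad
\mathrm{div}_{\partial\Omega}V = \mathrm{div}_{\partial\Omega}V_T + H\,(V\cdot N),
\end{equation*}
the second identity following from $\mathrm{div}_{\partial\Omega}(\phi N)=\phi H$. Hence the tangential contributions combine into $\nabla_{\partial\Omega}f\cdot V_T + f\,\mathrm{div}_{\partial\Omega}V_T = \mathrm{div}_{\partial\Omega}(fV_T)$, whose integral over the closed hypersurface $\partial\Omega$ vanishes by the tangential divergence theorem (the proof of which uses exactly the defining property of $\mathrm{div}_S$ given before Theorem~\ref{tods}, after a partition-of-unity argument since $fV_T$ need not have compact support in a single chart). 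What remains is precisely
\begin{equation*}
\int_{\partial\Omega}\Bigl(\partial_t f + (V\cdot N)\tfrac{\partial f}{\partial N} + H\,(V\cdot N)\,f\Bigr)dA,
\end{equation*}
and since the reduction to $t_0=0$ was arbitrary, this is the desired formula for all small $t$.
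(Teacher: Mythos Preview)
The paper does not prove this theorem: it is stated in the preliminaries section with the blanket remark that ``more details and proofs can be found in \cite{hp}''. So there is no in-paper argument to compare against.

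Your pull-back-and-differentiate approach is the standard one and is correct. One point deserves more care: your intermediate claim that the normal-distortion factor contributes ``$-\partial_N(V\cdot N)-HV\cdot N+\cdots$'' and that Theorem~\ref{tods}(2) then yields $\partial_t J(0,\cdot)=\mathrm{div}_{\partial\Omega}V$ mixes two different conventions for the surface divergence. A direct computation from $J=|\det Dh|\,\|(Dh)^{-T}N_0\|$ gives
\[
\partial_t J(0,x)=\mathrm{div}\,V - N\cdot\tfrac{\partial V}{\partial N},
\]
which is the \emph{trace-of-projected-gradient} surface divergence; the formula in Theorem~\ref{tods}(2), coming from the integration-by-parts definition used in this paper, only sees the tangential part of $V$ and yields $\mathrm{div}_{\partial\Omega}V=\mathrm{div}_{\partial\Omega}V_T$ (in particular $\mathrm{div}_{\partial\Omega}(\phi N)=0$, not $\phi H$, under that convention). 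Fortunately your subsequent decomposition is exactly what one needs regardless of convention: writing $\sigma=V\cdot N$, one has
\[
\partial_t J(0,\cdot)=\mathrm{div}_{\partial\Omega}V_T+H\sigma,
\]
and then $\nabla f\cdot V+f\,\partial_tJ = \mathrm{div}_{\partial\Omega}(fV_T)+\sigma\,\partial_Nf+H\sigma f$, the first term integrating to zero over the closed hypersurface. So the argument is sound; just be explicit about which surface-divergence you mean and avoid invoking Theorem~\ref{tods}(2) for the Jacobian step.
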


The following uniqueness result   will be frequently needed. 

%\begin{teo}\label{tuclbl}
%Let  $\Omega\subset\mathbb{R}^n$ be an open, connected, bounded  $\mathcal{C}^4$-regular region, and  $B$  an open ball in  $\mathbb{R}^n$ such that  $B\cap\partial\Omega$  $\mathcal{C}^4$ hypersurface.
 %Suppose that $u\in H^4(\Omega)$ satisfies 
%$$
%|\Delta^2u|\leq C(|\Delta u |+|\nabla u|+|u|)\,\, q.t.p \,\,{\mathrm{em }}\,\,\Omeg%a 
%$$
%for some positive constant  $C$ and 
%$$
%u=\frac{\partial u}{\partial N}=\Delta u=\frac{\partial(\Delta u)}{\partial N}=0 \,%\,{\mathrm{em}}\,\,B\cap\partial\Omega.
%$$
%Then  $u$  vanishes in  $\Omega$.
%\end{teo}

\begin{teo}\label{tuc}\textit{Uniqueness in the Cauchy Problem}
Let  $\Omega\subset\mathbb{R}^n$ be an open, connected, bounded  $\mathcal{C}^2$-regular region, and  $B$  an open ball in  $\mathbb{R}^n$ such that  $B\cap\partial\Omega$  $\mathcal{C}^2$ hypersurface.
 Suppose that $u\in H^2(\Omega)$ satisfies 
$$
|\Delta u|\leq C(|\nabla u|+|u|)\,\, a.e \,\,{\mathrm{in}}\,\,\Omega 
$$
for some positive constant  $C$ and 
$$
u=\frac{\partial u}{\partial N}=0 \,\,{\mathrm{on}}\,\,B\cap\partial\Omega.
$$
Then  $u$  vanishes in  $\Omega$.

\end{teo}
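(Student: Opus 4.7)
The proof I have in mind follows the classical extension-and-propagation strategy for unique continuation across a boundary. The idea is to promote the boundary Cauchy data to an interior vanishing condition on a larger domain and then invoke Aronszajn's (or Cordes') interior unique continuation theorem for second-order elliptic operators satisfying a differential inequality of the form $|\Delta u|\le C(|\nabla u|+|u|)$.

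First, I would extend $u$ by zero across $B\cap\partial\Omega$. Concretely, set $U=\Omega\cup B$ (an open set since $B$ is open), and define
\[
\tilde u(x) = \begin{cases} u(x), & x\in\Omega,\\ 0, & x\in B\setminus\overline{\Omega}. \end{cases}
\]
Because $B\cap\partial\Omega$ is of class $\mathcal{C}^2$ and $u\in H^2(\Omega)$ has vanishing Dirichlet \emph{and} Neumann traces on $B\cap\partial\Omega$, a standard boundary-flattening argument (using a $\mathcal{C}^2$ chart provided by regularity of $\partial\Omega$) shows that $\tilde u\in H^2(U)$ and that all weak derivatives of $\tilde u$ up to order two agree with the zero extension of the corresponding derivatives of $u$. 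In particular, $\Delta\tilde u$ is the zero extension of $\Delta u$, so the a.e.\ pointwise inequality
\[
|\Delta \tilde u(x)| \le C\bigl(|\nabla \tilde u(x)|+|\tilde u(x)|\bigr)
\]
holds throughout $U$ (it is trivial on $B\setminus\overline{\Omega}$ and inherited from $u$ on $\Omega$).

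Next, since $B\cap\partial\Omega$ is a genuine hypersurface, $B\setminus\overline{\Omega}$ is a nonempty open subset of $U$ on which $\tilde u\equiv 0$. Aronszajn's unique continuation theorem for the perturbed Laplacian (in the form stated for $H^2$ solutions satisfying the above differential inequality, with $C$ bounded) then implies that $\tilde u$ vanishes identically on the connected component of $U$ containing $B\setminus\overline{\Omega}$. That component contains $B$ (since $B$ is connected and meets $B\setminus\overline{\Omega}$), hence meets $\Omega$; and because $\Omega$ is connected and $\Omega\cap B\ne\emptyset$, the whole of $\Omega$ lies in that component. Therefore $u=\tilde u|_\Omega\equiv 0$ in $\Omega$.

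The main obstacle is the first step: justifying that the zero extension across $B\cap\partial\Omega$ belongs to $H^2(U)$ and that its distributional Laplacian equals the zero extension of $\Delta u$. This requires the $\mathcal{C}^2$ regularity of the boundary patch and the precise interpretation of ``$u=\partial u/\partial N=0$ on $B\cap\partial\Omega$'' as equality of traces in $H^{3/2}$ and $H^{1/2}$, respectively; once this is set up via local straightening of the boundary, the rest is a routine application of Aronszajn's theorem and connectedness of $\Omega$.
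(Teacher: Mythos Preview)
Your argument is correct and follows the classical route: extend by zero across the boundary piece where both Cauchy data vanish, verify the extension lies in $H^2$ of the enlarged domain (using the $\mathcal{C}^2$ regularity of $B\cap\partial\Omega$ to flatten locally and the vanishing of both traces), and then invoke Aronszajn's strong unique continuation theorem for the differential inequality $|\Delta \tilde u|\le C(|\nabla\tilde u|+|\tilde u|)$ together with the connectedness of $\Omega$.

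Note, however, that the paper does \emph{not} supply its own proof of this theorem: it is stated in Section~\ref{Pre} as a preliminary result ``frequently needed'' in the sequel, with the implicit understanding that it is classical (it appears, for instance, in Henry \cite{hp} and ultimately rests on the Aronszajn--Cordes unique continuation results). So there is no ``paper's proof'' to compare against; your proposal simply fills in the standard argument the authors are taking for granted.
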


\subsection{Perturbation of domains } \label{secperturb}

Given an open, bounded, ${C}^m$ region  $\Omega_{0}  
 \subset \R^{n}$, consider the following open subset of  
 ${C}^m(\Omega, \R^n)$   
$$ 
 \mathrm{Diff}^m(\Omega) = \{ h \in {C}^m(\Omega, \R^n) \; | \; h \textrm{ 
   is injective  and  } \frac{1}{| det h'(x) |} \textrm{ is bounded in  } 
 \Omega  \}.  
$$  
and the collection of regions  
$\{ h(\Omega_0) \; | \; h \in  \mathrm{Diff}^m(\Omega_0) \}$. 
We introduce a topology in  this set  by defining a (sub-basis of) the 
neighborhoods of a given $\Omega$ by   
$$ 
\{ h(\Omega) ; \|h -i_{\Omega}\|_{{C}^m(\Omega,\R^n)} < \varepsilon, 
\varepsilon >0 \textrm{ sufficiently small}  \},  
$$  
where $i_{\Omega}:\Omega \mapsto \R^n$ is the inclusion.
When $ \|h -i_{\Omega}\|_{{C}^m(\Omega,\R^n)} $ is small, $h$ is 
a  $ {C}^m$ embedding of $\Omega $ in $\R^n$, a  
${C}^m$ diffeomorphism to its image $h(\Omega)$. 
Micheletti \cite{micheletti-metrica} shows this topology is metrizable, and the 
set of regions $ {C}^m$-diffeomorphic to $\Omega$ may be 
considered a complete and separable metric space which we denote by 
$\mathcal{M}_{m}(\Omega)=\mathcal{M}_{m}$.    
We say that a function $F$ defined in the space $\mathcal{M}_{m}$ 
with values in a Banach space is ${C}^m$ or analytic  
if  $ h \mapsto F(h(\Omega)) 
$ is ${C}^m$ or analytic as a map of Banach spaces  
($h$ near $i_{\Omega}$ in  ${C}^m(\Omega, \R^n)$).  In this sense, 
we may express problems of perturbation of the boundary of a boundary value 
problem as problems of differential calculus in Banach spaces.

Consider the formal linear differential operator 

$$
Lu(x)=\left(u(x),\frac{\partial u}{\partial x_1}(x),...,\frac{\partial u}{\partial x_n}(x),\frac{\partial^2 u}{\partial x^2_1}(x),\frac{\partial^2 u}{\partial x_2\partial x_1}(x),...\right),\,\,x\in\mathbb{R}^n,
$$
$Lu(x)\in\mathbb{R}^p$. Given a function  $f:O\subset\mathbb{R}^n\times\mathbb{R}^p$, where $O$ is open, writing
  
$$
v(x)=f(x,Lu(x)),
$$
 one can define, for any open set $\Omega \in \R^n$, the  nonlinear differential operator  
 $F_{\Omega}$ by 
$$
F_{\Omega}=f(x,Lu(x)), \,\,x\in\Omega,
$$
 for sufficiently smooth functions defined in $\Omega$, with $(x,Lu(x))\in O$, for any  $x\in$ $\stackrel{-}{\Omega}$. If  $f$ is continuous, $\Omega$ is bounded and the differential operator  $L$ is of order less or equal than  $m$, the domain of $F_{\Omega} $ is a non empty open subset of $\mathcal{C}^m(\Omega)$ with values in  $\mathcal{C}^0(\Omega)$, that is 
\begin{equation*}
\begin{array}{llr}
           &   & F_{\Omega}: D_{F_{\Omega}}\subset \mathcal{C}^m(\Omega)\rightarrow \mathcal{C}^0(\Omega)\\
           &   & u\mapsto f(x,Lu(x)).
\end{array}
\end{equation*}

Let  $h:\Omega\rightarrow\mathbb{R}^n$ be a $\mathcal{C}^m$ embedding. If  $u$ is defined in  $h(\Omega)$, we define the composition or "pull-back" map by 
%$$
%h^*u(x)=(u\circ h)(x)= u(h(x)),\,\,x\in\Omega.
%$$
 %The map 
 $$
\begin{array}{ccc}
     &   & h^*: \mathcal{C}^m(h(\Omega))\rightarrow \mathcal{C}^m(\Omega)\\
     &   &   u\mapsto u\circ h
\end{array}
$$ 
which is then an isomorphism with inverse 
$h^{*-1}=(h^{-1})^*$.
We use the same notation for the pull-back in other function spaces.
%\begin{prop}
%A aplicação
%$$
%\begin{array}{ccc}
 %    &   & h^*: \mathcal{C}^m(h(\Omega))\rightarrow %\mathcal{C}^m(\Omega)\\
%     &   &   u\mapsto u\circ h 
%\end{array}
%$$
%é um isomorfismo com inversa $h^{*-1}=(h^{-1})^*$.
%\end{prop}
%\begin{proof}
%Se $h$ é um difeomorfismo de classe $\mathcal{C}^m$ temos, pela regra da cadeia, que $h^*$ está bem definido e é linear, injetiva e sobrejetiva. De fato, é também limitada, pois $||h^*u||_{\mathcal{C}^m(\Omega)}=||u\circ h||_{\mathcal{C}^m(\Omega)}\leq c||u||_{\mathcal{C}^m(h(\Omega))}$ para algum $c>0$.
 
%\end{proof}
 
 If $h$ is such an embedding we can consider the differential operator 
 acting on the perturbed region $h(\Omega)$

$$
F_{h(\Omega)}: D_{F_{h(\Omega)}}\subset \mathcal{C}^m(h(\Omega))\rightarrow \mathcal{C}^0(h(\Omega)).
$$
which is termed \textit{the Eulerian form} of the formal nonlinear differential operator 
$v \mapsto f( \cdot ,Lv(\cdot)), \,\,x$   on $h(\Omega)$,
while

$$
h^*F_{h(\Omega)}h^{*-1}: h^*D_{F_{h(\Omega)}}\subset \mathcal{C}^m(\Omega)\rightarrow \mathcal{C}^0(\Omega)
$$
is called its \textit{Lagrangean} form.

We also treat boundary conditions in the same way. The Neumann problem  requires $N_{\Omega(t)}(y)\cdot\nabla u=0 $ on $\partial\Omega(t)$ in this case  the particular extension of $N_{\Omega(t)}$ away from the boundary is irrelevant. We choose some extension of $N_{\Omega}$ in the reference region and then define $N_{\Omega(t)}=N_{h(t,\Omega)}$ by 
\begin{equation}\label{Nf}
h^*N_{h(\Omega)}(x)=N_{h(\Omega)}(h(x))= ^Th_x^{-1}N_{\Omega}(x)\frac{1}{ ||^Th_x^{-1}N_{\Omega}(x)||},
\end{equation}

for $x\in\partial\Omega$, where $^Th_x^{-1}$ is the inverse-transpose of the 
Jacobian matrix $h_x=[\frac{\partial h_i}{\partial x_j}]_{i,j=1}^{n}$ and $||.||$ is the Euclidean norm. 
%$$
%\xymatrix{
%\mathcal{C}^m(h(\Omega))\ar[r]^{F_{h(\Omega)}}\ar[d]_{h^*} & \mathcal{C}^0(h(\Omega))\ar[d]^{h^*} \\
%\mathcal{C}^m(\Omega)\ar[r]^{h^*F_{h(\Omega)}h^{*-1}} & \mathcal{C}^0(\Omega)
%} 
%$$

The Eulerian form is more natural and, usually, more convenient for computations (see, for example,  Corollary  \ref{ed}) while the Lagrangean form is more appropriate to prove results (see section \ref{DCAA}).

 The advantage of the Lagrangean form is to act in spaces which don't depend on  $h$, which facilitates (for example) the use of the
  Implicit Function Theorem. However, we then need to know the smoothness of  
\begin{equation}\label{aod}
(u,h)\mapsto h^*F_{h(\Omega)}h^{*-1},
\end{equation}
 and we need to be able to compute derivatives with respect to 
 $h$.
% To this end, it is necessary to explicit the domain of the parameter $h$.
% If $\Omega\subset\mathbb{R}^n$ is a $ \mathcal{C}^m$-regular bounded open domain then 
%$$
%Diff^m(\Omega)=\{h\in \mathcal{C}
%^m(\Omega,\mathbb{R}^n)/ h{\mathrm{ \,\, \acute{e} \,\, injective \,\,and ,\,}} |deth_x(x)|^{-1}\,\, {\mathrm{\acute{e} \,\,limitado\,\, em \,\,}}\Omega\}.
%$$
 It is shown by Henry in \cite{hp} that the  map  (\ref{aod}),
 from  $ Diff^m(\Omega) \times \mathcal{C}^m(\Omega)$ into  $  \mathcal{C}^0$
  is as regular as the  function $f$ (other function spaces can also be used,
 with similar results).

The next result is used throughout the paper.

\begin{lema}\label{fdn}
Let $\Omega$ a $C^2$-regular region, $N_{\Omega}(.)$ a $C^1$ unit-vector field defined on a neighborhood of $\partial\Omega$ which is the outward normal on $\partial\Omega$, and for $C^2$ embeddings $h:\Omega\rightarrow\mathbb{R}^n$ define $N_{h(\Omega)}$ on a neighborhood of $h(\partial\Omega)=\partial h(\Omega)$ by (\ref{Nf}) above. Suppose $h(t,.)$ is an embedding for each $t$, defined by 
$$
\frac{\partial}{\partial t}h(t,x)= V(t,h(t,x))\,\, \,\,x\in\Omega,\, h(0,x)=x,
$$
$(t,x) \rightarrow V(t,x)$ is $C^2$ and $\Omega(t)=h(t,\Omega)$, $N_{\Omega(t)}=N_{h(t,\Omega)}$. Then for $x$ near $\partial\Omega$, $y=h(t,x)$ near $\partial\Omega(t)$,
\begin{eqnarray*}
\left(\frac{\partial }{\partial t}\right)N_{\Omega(t)}(y)= -(\nabla_{\partial\Omega(t)}\sigma + \sigma \frac{\partial N_{\Omega(t)} }{\partial N_{\Omega(t)}}),
\end{eqnarray*}
$\sigma=V\cdot N_{\Omega(t)}$ is the normal velocity and $\nabla_{\partial\Omega(t)}=\nabla - N_{\Omega(t)}\frac{\partial \sigma}{\partial N_{\Omega(t)}}$ is the component of  the gradient tangent to $\partial\Omega(t)$.
\end{lema}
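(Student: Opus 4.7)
The plan is to start from the defining formula~\eqref{Nf} for $N_{\Omega(t)}(h(t,x))$, differentiate at $t=0$ to obtain the material (Lagrangean) time derivative, and then convert to the Eulerian derivative at fixed $y$ via the chain rule. Set $\tilde N(t,x) := N_{\Omega(t)}(h(t,x))$ and $G(t,y) := N_{\Omega(t)}(y)$, so that $\tilde N(t,x) = G(t,h(t,x))$.

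For the material derivative, note that $(h_0)_x=I$ and the flow equation gives $\frac{d}{dt}(h_t)_x|_{t=0} = DV(0,x)$, hence $\frac{d}{dt}(h_t)_x^{-T}|_{t=0}=-DV(0,x)^T$. Writing \eqref{Nf} as $\tilde N = A/\|A\|$ with $A(t)=(h_t)_x^{-T}N_\Omega(x)$ and $\|A(0)\|=1$, the derivative of the normalization is $A'(0)-(A'(0)\cdot N)N$, i.e.\ the projection of $A'(0) = -DV^T N$ onto $N^\perp$. Writing $W^{\perp N} := W-(W\cdot N)N$, this gives
\[
\partial_t \tilde N|_{t=0} \;=\; -\bigl(DV^T N\bigr)^{\perp N}.
\]
The chain rule $\partial_t \tilde N = \partial_t G + DG\cdot V$ then yields
\[
\partial_t G|_{0,y} \;=\; -(DV^T N)^{\perp N} - DN\cdot V.
\]

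To match this with $-\nabla_{\partial\Omega(t)}\sigma-\sigma\,\partial_N N$, I would evaluate at $y\in\partial\Omega$ and decompose $V=\sigma N+V_T$ where $\sigma=V\cdot N$ and $V_T$ is tangent to $\partial\Omega$. The identity $\|N\|^2\equiv 1$ gives $(DN)^T N=0$, so $DN\cdot V = \sigma\,\partial_N N + DN\cdot V_T$ and $\nabla\sigma = (DV)^T N + (DN)^T V_T$. Taking the $N^\perp$-component of $\nabla\sigma$ gives $\nabla_{\partial\Omega}\sigma = ((DV)^T N)^{\perp N} + ((DN)^T V_T)^{\perp N}$, and comparing terms shows that the entire lemma reduces to the identity
\[
DN\cdot V_T \;=\; \bigl((DN)^T V_T\bigr)^{\perp N}\qquad\text{at }y\in\partial\Omega(t).
\]
Both sides are tangential to $\partial\Omega(t)$ at $y$, and pairing with an arbitrary tangent $W$ reduces the identity to $W\cdot DN\,V_T = V_T\cdot DN\,W$ for tangent $V_T, W$, which is precisely the symmetry of the second fundamental form of $\partial\Omega(t)$ at $y$.

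The main obstacle is this final identity: separating the algebraic manipulation from the geometric content, and recognizing that the discrepancy between $DN\cdot V_T$ and $(DN)^T V_T$ is exactly absorbed by the symmetry of the second fundamental form. In coordinates with $N(y)=e_n$, this reduces to $\partial_i N_j=\partial_j N_i$ for $i,j<n$, which follows from writing $\partial\Omega$ locally as a graph $x_n=\phi(x_1,\dots,x_{n-1})$ and using the symmetry of mixed partials of $\phi$. Once this is in hand, the rest is routine chain-rule bookkeeping together with the product rule on $\sigma=V\cdot N$.
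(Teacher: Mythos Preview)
Your argument is correct. The paper itself does not prove this lemma: it is quoted in Section~\ref{Pre} as a preliminary result from Henry~\cite{hp}, so there is no ``paper's proof'' to compare against. Your route---differentiate the defining formula~\eqref{Nf} to get the material derivative, pass to the Eulerian derivative by the chain rule, then reduce the remaining discrepancy to the symmetry of the Weingarten map---is the standard one and is carried out cleanly. The only point worth flagging is that you compute at $t=0$, whereas the lemma is stated for general $t$; this is harmless since one can take $\Omega(t_0)$ as the new reference region and reapply the same computation.
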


\section{Continuity and analiticity of curves of eigenvalues} \label{DCAA}

 In this section,we present some results on the  continuity of the  eigenvalues of the Neumann Laplacian
   with respect to $\mathcal{C}^2$ perturbations of the domain and  in the case of parametrized families of 
    $\mathcal{C}^{2}$ domains   we prove the existence of analytic curves of eigenvalues and eigenfunctions. Although these results could probably be obtained adapting results in \cite{kato}, we found it easier to follow the   approach of Henry (see examples 4.1 and 4.4 of \cite{hp}) which relies on a careful use of the Lyapunov-Schimdt method.

 We also obtain expressions  for the first and second derivatives of the eigenvalues in this case.

\subsection{Continuity}
  
We consider here the slightly more general case of the
  Laplace problem with Robin  boundary conditions in a  regular  bounded open region    $\Omega\subset\mathbb{R}^n$. 
           \begin{equation}\label{plr}
                 \left\{
                       \begin{array}{lccc}
                             (L +\lambda)u=0,& in \ \ \Omega;\\
                              (\frac{\partial }{\partial N} +
 \beta(x)) u=0, & \ \ on \ \ \partial \Omega;
                       \end{array}
                            \right.
            \end{equation}                
            where
 $L= \Delta+c(x)$ and  $c$ and   $\beta$ are  of class  $\mathcal{C}^2$. 

 The associated  Lagrangean form is then
           \begin{equation}\label{plrp}
                 \left\{
                       \begin{array}{lccc}
                             h^*(L+ \lambda)h^{*-1}u=0,& in \ \ \Omega;\\
                              h^*(\frac{\partial }{\partial N_{h}}+\beta(x))h^{*-1} u=0, & \ \ on\ \ \partial \Omega;
                       \end{array}
                            \right.
            \end{equation}                
 where $h\in Diff^2(\Omega)$. The regularity of the perturbed problem with respect to  $h$ depends on the regularity of the functions
   $c$  and $\beta$. More precisely, if $\Omega$, $h\in Diff^m(\Omega)$, $c\in {\cal{C}}^{r+m-2}$ and $\beta\in {\cal{C}}^{r+m-1}$   , then for $u\in H^m(\Omega)$

$$
(h,u)\longmapsto h^*(\Delta+c)h^{*-1}u\in H^{m-2}(\Omega),
$$                            
 is  of class  $\mathcal{C}^r$ and 
 $$
\begin{array}{c}
(h,u)\longmapsto h^*(\frac{\partial}{\partial N_h}+\beta)h^{*-1}u\in H^{m-\frac{3}{2}}(\partial\Omega)
\end{array}
$$
 is  of class  $\mathcal{C}^r$
  since
 $(h,u)\longmapsto (c \circ h)u\in H^{m-r}(\Omega)$ is of class $\mathcal{C}^r$ and 
  $(h,u)\longmapsto (\beta\circ h)u\in H^{m-r-1}(\Omega)$ is of class $\mathcal{C}^r$.
(in the purely Neumann case, we obtain that both maps are of class
 $\mathcal{C}^1$ requiring $h$ of class $\mathcal{C}^2$) (see \cite{hp}, Example 3.2).

% \begin{teo}\label{tcaln}
% Suppose  $\lambda_0$  is the unique eigenvalue of f (\ref{plr}) in the interval $(\lambda_0-\epsilon,\lambda_0+\epsilon)$.
% If $\lambda_0$ has   multiplicity $m>1$  then there exists  $\delta>0$
%  such that, for all  $h\in Diff^2(\Omega)$, $||h-i_{\Omega}||_{\mathcal{C}^2}<\delta$, there exist exactly $m$ eigenvalues (counting multiplicity)   of the problem (\ref{plrp})  in  $(\lambda_0-\epsilon,\lambda_0+\epsilon)$. 
% \end{teo}
  
% \begin{proof}
%  Let $\tilde{L}$ be the extension of $L$ as an operator from $H^1$
%  to $H^{-1}$. Then, it proved in
%  \cite{contin} that  
%    $$  || \tilde{L}h^{*-1}u -   \tilde{L} ||_ u ||_{H^{-1}_{\Omega}}
%   \leq \varepsilon{h}
%   ||u||_{H^1{\Omega}} + \eta(h) ||\tilde{L}u||_{H^{-1}_{\Omega}},
%   $$
%  where $\varepsilon(h)$ and $\eta(h)$ go to $0$ as 
% $ ||h - i_{\Omega}||_{\mathcal{C}^2}$ goes to $0$.

%   Since the eigenvalues (and eigenfunctions) of $L$ and its extension 
% $\tilde{L}$ are the same, the result follows  then from  IV-2.14 and  IV-3.16
%  of \cite{kato}
 
%  \end{proof}

\begin{teo}\label{tcaln}
Suppose  $\lambda_0$  is the unique eigenvalue of  (\ref{plr}) in the interval $(\lambda_0-\epsilon,\lambda_0+\epsilon)$.
If $\lambda_0$ has   multiplicity $m$  then there exists  $\delta>0$
 such that, for all  $h\in Diff^2(\Omega)$, $||h-i_{\Omega}||_{\mathcal{C}^2}<\delta$, there exist exactly $m$ eigenvalues (counting multiplicity)   of the problem (\ref{plrp})  in  $(\lambda_0-\epsilon,\lambda_0+\epsilon)$. 
\end{teo}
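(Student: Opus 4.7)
The strategy is to pass to the Lagrangian form on fixed function spaces, perform a Lyapunov--Schmidt reduction to an $m\times m$ matrix eigenvalue problem, and then count zeros of the characteristic determinant by continuity, as suggested by Henry's Examples 4.1 and 4.4 cited above.

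First I would recast the perturbed problem (\ref{plrp}) as the linear operator equation $T(h,\lambda)u=0$ on the fixed Hilbert space $H^{2}(\Omega)$, where
$$
T(h,\lambda)u\;=\;\bigl(h^{*}(L+\lambda)h^{*-1}u,\; h^{*}(\partial/\partial N_{h}+\beta)h^{*-1}u\bigr)
$$
takes values in $L^{2}(\Omega)\times H^{1/2}(\partial\Omega)$. By the regularity statements quoted just before the theorem, the map $(h,\lambda)\mapsto T(h,\lambda)$ is of class $\mathcal{C}^{1}$ in the operator norm on a neighborhood of $(i_{\Omega},\lambda_{0})$. The base operator $T(i_{\Omega},\lambda_{0})$ is Fredholm of index zero by standard elliptic theory, with kernel equal to the $m$-dimensional eigenspace $E_{0}$ of (\ref{plr}) at $\lambda_{0}$.

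Next I would carry out a Lyapunov--Schmidt reduction. Let $P$ denote the $L^{2}$-orthogonal projection onto $E_{0}$, and choose a continuous projection $Q$ of $L^{2}(\Omega)\times H^{1/2}(\partial\Omega)$ onto a complement of $\mathrm{ran}\,T(i_{\Omega},\lambda_{0})$, which has dimension $m$ and can be identified with $E_{0}$ via the $L^{2}$ pairing on $\Omega$. Decomposing $u=v+w$ with $v\in E_{0}$ and $w\in\ker P$, the restricted operator $(I-Q)T(i_{\Omega},\lambda_{0})\colon\ker P\to\mathrm{ran}\,T(i_{\Omega},\lambda_{0})$ is invertible, and this persists for $(h,\lambda)$ near $(i_{\Omega},\lambda_{0})$. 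Hence the equation $(I-Q)T(h,\lambda)(v+w)=0$ has a unique solution $w=W(h,\lambda)v$, linear in $v$ and $\mathcal{C}^{1}$ in $(h,\lambda)$. Substituting back yields a linear $m\times m$ system
$$
M(h,\lambda)\,v\;:=\;Q\,T(h,\lambda)\bigl(v+W(h,\lambda)v\bigr)\;=\;0,
$$
and the standard Lyapunov--Schmidt identification $v\mapsto v+W(h,\lambda)v$ is an isomorphism $\ker M(h,\lambda)\to\ker T(h,\lambda)$.

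The final step is to count. Since $T(h,\lambda)$ is intertwined by the pull-back isomorphism $h^{*}$ with the self-adjoint Eulerian operator associated to (\ref{plr}) on $h(\Omega)$, its eigenvalues are real and algebraic and geometric multiplicities coincide; the same then holds for $M(h,\lambda)$. Thus the eigenvalues of (\ref{plrp}) in $(\lambda_{0}-\epsilon,\lambda_{0}+\epsilon)$ correspond, with multiplicity, to the real roots in this interval of the characteristic function $\Delta(h,\lambda):=\det M(h,\lambda)$, which is continuous in $h$ and analytic in $\lambda$. At $h=i_{\Omega}$, $\Delta(i_{\Omega},\cdot)$ has an isolated zero of exact order $m$ at $\lambda_{0}$ and no other zero on $(\lambda_{0}-\epsilon,\lambda_{0}+\epsilon)$, so a Rouch\'e argument on a small complex disk centered at $\lambda_{0}$ produces $\delta>0$ such that, for $\|h-i_{\Omega}\|_{\mathcal{C}^{2}}<\delta$, $\Delta(h,\cdot)$ has exactly $m$ zeros (with multiplicity) in this disk, all necessarily real by the self-adjointness argument above, and hence in $(\lambda_{0}-\epsilon,\lambda_{0}+\epsilon)$.

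The main obstacle I expect is the Lyapunov--Schmidt bookkeeping with the boundary operator included in the target: one must set up the projection $Q$ on the product space $L^{2}(\Omega)\times H^{1/2}(\partial\Omega)$, confirm the index-zero Fredholm property of $T(i_{\Omega},\lambda_{0})$ in this formulation, and transfer the self-adjointness of the Eulerian operator through the pull-back to obtain the equality of algebraic and geometric multiplicities for the non-symmetric $M(h,\lambda)$. Once this is in place, the analytic-continuity count of zeros is routine.
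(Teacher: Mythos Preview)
Your plan is essentially the paper's own argument: a Lyapunov--Schmidt reduction to an $m\times m$ matrix $M(h,\lambda)$, followed by Rouch\'e on $\det M(h,\cdot)$ using that $\det M(i_{\Omega},\lambda)=(\lambda-\lambda_0)^m$.

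The one genuine stylistic difference is how the boundary component is handled. You work abstractly on the product target $L^2(\Omega)\times H^{1/2}(\partial\Omega)$, invoking Fredholm index zero for $T(i_\Omega,\lambda_0)$ and an unspecified projection $Q$ onto a complement of the range. The paper instead stays on $L^2(\Omega)$ and eliminates the boundary obstruction by hand: it rewrites $P(L+\lambda)\psi$ via Green's formula as a boundary integral $\sum_j\phi_j\int_{\partial\Omega}\phi_j(\partial_N+\beta)\psi$, and folds this term into the ``infinite-dimensional'' equation so that the reduced operator $\dot\psi\mapsto\bigl((L+\lambda_0)\dot\psi-\sum_j\phi_j\int_{\partial\Omega}\phi_j(\partial_N+\beta)\dot\psi,\,(\partial_N+\beta)\dot\psi\bigr)$ is an explicit isomorphism from $H^2\cap\mathcal N(P)$ onto $\mathcal N(P)\times H^{1/2}(\partial\Omega)$. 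This concrete manipulation is exactly what resolves the ``obstacle'' you flag at the end: it identifies the cokernel functionals as $(f,g)\mapsto\int_\Omega f\phi_j-\int_{\partial\Omega} g\phi_j$ without having to set up an abstract $Q$. Your route is cleaner to state; the paper's route is cleaner to verify and also makes the formula $\det M(i_\Omega,\lambda)=(\lambda-\lambda_0)^m$ immediate. Your observation that self-adjointness of the Eulerian problem forces the Rouch\'e roots to be real and the algebraic and geometric counts to match is a point the paper leaves implicit.
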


\begin{proof}
 Let  $\{\phi_j\}_{j=1}^m$ be an orthonormal basis for the eigenspace associated to  $\lambda_0$  and  $Pu=\sum_j^m\phi_j\int_{\Omega}\phi_j u$  the orthogonal projection into it. We write an arbitrary function $u \in L^2 (\Omega)$ in a unique way as  $u=\phi+\psi$, where  $\phi\in{\cal{R}}(P)= {\cal N}( L +\lambda_0)$  and  $\psi\in {\cal{N}}(P)= {\cal R}(L + \lambda_0)$. The perturbed problem (\ref{plrp})
 is then equivalent to the equations   

           \begin{equation}\label{deqlr}
                \left\{
                      \begin{array}{lccc}
                            P(h^*(L+ \lambda)h^{*-1}(\phi+\psi))=0,& \textrm{in}
  \ \ \Omega;\\
                            (I-P)(h^*(L+ \lambda)h^{*-1}(\phi+\psi)=0,& 
 \textrm{in}  \ \ \Omega;\\
                             h^*(\frac{\partial }{\partial N}+\beta(x))h^{*-1} (\psi+\phi)=0, & \ \  \textrm{on} \ \ \partial \Omega;
                      \end{array}
                           \right.
           \end{equation}                

 We first solve the second and third equations. The boundary term can be
 rewritten as  %somando e subtraindo o termo $(\frac{\partial}{\partial N}+\beta)\psi$:

\begin{equation*}
\left(\frac{\partial }{\partial N}+\beta\right)\psi+\left(h^*\left(\frac{\partial }{\partial N}+\beta\right)h^{*-1}-\left(\frac{\partial}{\partial N}+\beta\right)\right)(\psi+\phi)=0.
\end{equation*}

 Now, summing and subtracting the therm  $(L+ \lambda)\psi$ in the second equation and observing that  

\begin{eqnarray}
PL\psi=P(L+\lambda)\psi & = & \sum_{j=1}^m\phi_j\int_{\Omega}\phi_j(L+\lambda)\psi\nonumber\\
& = & \sum_{j=1}^m\phi_j\left(\int_{\Omega}\phi_j(L+\lambda)\psi-\psi(L+\lambda)\phi_j\right)\nonumber\\
& = & \sum_{j=1}^m\phi_j\left(\int_{\partial\Omega}\phi_j\frac{\partial\psi}{\partial N}-\psi\frac{\partial\phi_j}{\partial N}\right)\nonumber\\
& = & \sum_{j=1}^m\phi_j\left(\int_{\partial\Omega}\phi_j\left(\frac{\partial}{\partial N}+\beta\right)\psi-\psi\left(\frac{\partial}{\partial N}+\beta\right)\phi_j\right)\nonumber\\
& = & \sum_{j=1}^m\phi_j\int_{\partial\Omega}\phi_j\left(\frac{\partial}{\partial N}+\beta\right)\psi,\nonumber
\end{eqnarray}
 and 
\begin{eqnarray}
(L+\lambda)\psi & = & (I-P) \left[(L+\lambda)\psi\right] +\sum_{j=1}^m\phi_j\int_{\Omega}\phi_j(L+\lambda)\psi\nonumber,
\end{eqnarray}
we obtain 
$$
(L+ \lambda)\psi+ (I-P)(h^*Lh^{*-1}-L)(\psi+\phi)-\sum_{j=1}^m\phi_j\int_{\partial\Omega}\phi_j\left(\frac{\partial}{\partial N}+\beta\right)\psi=0.
$$

Therefore, the second and third equations are equivalent to 
 $F(h,\lambda, \phi,\psi)=0$, where 

\begin{eqnarray}
F\ : & \! Diff^2(\Omega)\times\mathbb{R}\times{\cal{R}}(P)\times H^2(\Omega)\cap{\cal{N}}(P)\longrightarrow &{\cal N}(P)\times H^{\frac{3}{2}}(\Omega)\nonumber\\
     & \!F(h,\lambda,\phi,\psi)\!=\!(F_1(h,\lambda,\phi,\psi),F_2(h,\lambda,\phi,\psi))\nonumber
\end{eqnarray}     
           
and          
           \begin{equation*}
                \left\{
                      \begin{array}{lcc}
                          F_1=(L+ \lambda)\psi+ (I-P)(h^*Lh^{*-1}-L)(\psi+\phi)-\sum_{j=1}^m\phi_j\int_{\partial\Omega}\phi_j(\frac{\partial}{\partial N}+\beta)\psi,\\
                          F_2=(\frac{\partial }{\partial N}+\beta(x))\psi+(h^*(\frac{\partial }{\partial N}+\beta)h^{*-1}-(\frac{\partial}{\partial                                  N}+\beta)) (\psi+\phi).
                      \end{array}
                           \right.
           \end{equation*}

 Now, since the map 

$$
\begin{array}{c}
\frac{\partial F}{\partial \psi}(i_{\Omega},\lambda_0,0,0)\dot{\psi}=((L+ \lambda_0)\dot{\psi}-\sum_{j=1}^m\phi_j\int_{\partial\Omega}\phi_j(\frac{\partial}{\partial N}+\beta)\dot{\psi},(\frac{\partial }{\partial N}+\beta(x))\dot{\psi})
\end{array}
$$
is an isomorphism  from 
 $H^2(\Omega)\cap{\cal{N}}(P)$  into  ${\cal N}(P)\times H^{\frac{1}{2}}(\Omega)$.
It follows from the Implicit Function Theorem, that 
 the equation $F(h,\lambda,\phi,\psi)=(0,0)$ can be solved for  $\psi$ as a function of  $\lambda$, $h$ and $\phi$.
More precisely, there exist neighborhoods 
 $\cal{V}$ in  $\mathcal{C}^2(\mathbb{R}^n,\mathbb{R}^n)$  of  $i_{\Omega}$, $(\lambda_0-\epsilon, \lambda_0+\epsilon)$ of  $\lambda_0$ and a
  $\mathcal{C}^1$ function  $ \psi= S(h,\lambda)\phi$ which gives the unique solution of     
$F(h,\lambda,\phi,\psi)=0$, with   $h \in  \cal{V}$ and $\lambda \in
 (\lambda_0-\epsilon, \lambda_0+\epsilon)$ . Furthermore, $S(h,\lambda)\phi$ is
 analytic  $\lambda$  and  linear  in  $\phi$.

 Now, to solve the first equation in  (\ref{deqlr}) observe that, since
 $\phi\in{\cal{R}}(P)$, there exist  real numbers $c_1,c_2,...,c_m$ not all equal to  zero, such that  $\phi = \sum_{j=1}^{m}c_j\phi_j$ and, therefore, the equation  (\ref{deqlr})  is equivalent to the system in the variables $c_1,...,c_j$
$$
\sum_{j=1}^m c_j\int_{\Omega}\phi_kh^*(L+\lambda)h^{*-1}(\phi_j+S(h,\lambda)\phi_j)=0
$$
for $k=1,2,...,m$.  Thus,  $\lambda$ is an eigenvalue of  (\ref{plrp})
 if, and only if  $Det M(h,\lambda)=0$, where 
$$
M_{k,j}(h,\lambda)=\int_{\Omega}\phi_kh^*(L+\lambda)h^{*-1}(\phi_j+S(h,\lambda)\phi_j).
$$ 
and, in this case, the associated eigenfunctions are given by 

$$
u=\sum_{j=1}^{m}c_j(\phi_j+S(h,\lambda)\phi_j),
$$
where $c=(c_1,...,c_m)$ satisfies $M(h,\lambda)c=0$.

  Finally, we observe that the equation $DetM(h,\lambda)=0$ has exactly $m$
 roots in a neighborhood   ${\cal{V}}\times B_{\delta}(\lambda_0)$ of $(i_{\Omega},\lambda_0)$, by Rouche's theorem  since 
 $h=i_{\Omega}$, $Det M(i_{\Omega},\lambda)=(\lambda-\lambda_0)^m$ if
  $h=i_{\Omega}$. 

 \end{proof}

\subsection{Existence of analytic curves}

  The next result ensures the existence of analytic curves of eigenvalues and eigenvectors for the problem  $(\ref{plrp})_{h(t,.)}$  when  $h(t,.)$ 
 is an analytic curve of diffeomorphisms  if $c\equiv 0$ and $\beta\equiv 0$, that is for 
 the Neumann Laplacian.  

\begin{teo}\label{tecapln}
Suppose  $\lambda_0$  is an eigenvalue of multiplicity $m$ for 
 the problem (\ref{plr}) with 
 $c \equiv 0$, $\beta\equiv 0$, and let  $h(t,.)$ be an analytic curve of diffeomorphisms of
 class  $\mathcal{C}^{3}$ such that  $h(0,x)=x$.
 Then, there exist   $m$ analytic curves   $\mu_1(t),
 \mu_2(t), \cdot, \mu_m(t) $  and $m$ analytic curves  
 $\phi_1(t), \phi_2(t), \cdot, \phi_m(t) $, giving the eigenvalues and eigenfunctions of   $(\ref{plrp})_{h(t,.)}$  near $\lambda_0$ and its associated eigenfunctions.     
\end{teo}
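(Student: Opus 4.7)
The plan is to upgrade the Lyapunov--Schmidt reduction of Theorem~\ref{tcaln} from the $\mathcal{C}^1$ category to the real-analytic category, then exploit the self-adjointness of the Neumann Laplacian to extract $m$ analytic branches by a Rellich-type argument on a symmetric matrix pencil.

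\textbf{Analytic reduction.} Because $c \equiv 0$ and $\beta \equiv 0$, the pull-back maps $(h,u) \mapsto h^*\Delta h^{*-1}u$ and $(h,u) \mapsto h^*(\partial/\partial N_h)h^{*-1}u$ are real-analytic in $h$ (as a strengthening, for smooth coefficients, of the regularity discussion preceding Theorem~\ref{tcaln}). Since $t \mapsto h(t,\cdot)$ is analytic by hypothesis, the auxiliary map $F$ constructed in the proof of Theorem~\ref{tcaln} becomes jointly real-analytic in $(t,\lambda,\phi,\psi)$. The analytic implicit function theorem then upgrades the correction $\psi = S(t,\lambda)\phi$ to an analytic function of $(t,\lambda)$, and the reduced $m \times m$ matrix
$$
M(t,\lambda)_{kj} = \int_\Omega \phi_k \cdot h(t)^*(\Delta+\lambda)h(t)^{*-1}\bigl(\phi_j+S(t,\lambda)\phi_j\bigr)\, dx
$$
is analytic in $(t,\lambda)$, with $M(0,\lambda) = (\lambda-\lambda_0)I_m$; the eigenvalues of $(\ref{plrp})_{h(t,\cdot)}$ near $\lambda_0$ are precisely the roots of $\det M(t,\lambda)=0$.

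\textbf{Symmetrization.} To benefit from self-adjointness, I replace $M$ by the symmetric matrix
$$
N(t,\lambda)_{kj} = \int_\Omega u_k(t,\lambda)\cdot h(t)^*(\Delta+\lambda)h(t)^{*-1}u_j(t,\lambda)\cdot |\det h'(t,\cdot)|\, dx,
$$
where $u_j(t,\lambda) = \phi_j + S(t,\lambda)\phi_j$. The substitution $y = h(t,x)$ rewrites this as $\int_{\Omega(t)} \tilde u_k (\Delta+\lambda)\tilde u_j\, dy$ with $\tilde u_j = u_j \circ h(t)^{-1}$. Since $F_2 = 0$ forces $\partial \tilde u_j/\partial N = 0$ on $\partial\Omega(t)$, Green's identity gives $N = N^T$. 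The equation $F_1 = 0$ yields $h(t)^*(\Delta+\lambda)h(t)^{*-1}u_j = \sum_\ell M_{\ell j}(t,\lambda)\phi_\ell$, so $N(t,\lambda) = Q(t,\lambda)M(t,\lambda)$ with $Q(t,\lambda)_{k\ell} = \int_\Omega u_k(t,\lambda)\phi_\ell |\det h'(t,\cdot)|\, dx$ and $Q(0,\lambda)=I_m$. Hence for small $t$ the roots of $\det N(t,\lambda)=0$ coincide with those of $\det M(t,\lambda)=0$, and $N$ is an analytic real-symmetric matrix family with $N(0,\lambda)=(\lambda-\lambda_0)I_m$.

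\textbf{Rellich extraction and main obstacle.} Decomposing $N(t,\lambda) = (\lambda-\lambda_0)R(t,\lambda) + C(t)$, with $R$ symmetric analytic and $R(0,\lambda)=I_m$ and with $C(t) = N(t,\lambda_0)$ symmetric analytic and $C(0)=0$, the symmetric square root $R(t,\lambda_0)^{1/2}$ is well-defined and analytic for small $t$. Rellich's theorem applied to the real-analytic family of real symmetric matrices $\tilde C(t) := R(t,\lambda_0)^{-1/2}C(t)R(t,\lambda_0)^{-1/2}$ produces $m$ analytic eigenvalue curves and a real-analytic orthonormal basis of eigenvectors. A scalar implicit-function-theorem argument, based on $\partial_\lambda N(t,\lambda)\approx I_m$ for $(t,\lambda-\lambda_0)$ small, then converts each such eigenvalue into the genuine analytic root $\mu_j(t)$ of $\det N(t,\cdot)=0$, and the eigenfunctions are recovered as $\phi_j(t) = \sum_k c^j_k(t)\,u_k(t,\mu_j(t))$. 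The principal technical hurdle is precisely this last step: the matrix $N(t,\lambda)$ depends analytically on $\lambda$ in addition to $t$, so the equation $\det N(t,\lambda)=0$ is not literally the symmetric eigenvalue problem covered by Rellich, and reconciling the implicit $\lambda$-dependence with Rellich's conclusion requires the careful perturbative argument just sketched.
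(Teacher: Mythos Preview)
Your reduction and symmetrization are sound, and in fact your route to a symmetric reduced matrix is more direct than the paper's. The paper does not reuse the matrix $M$ from Theorem~\ref{tcaln}; instead it first solves an auxiliary boundary problem to produce functions $\hat\varphi_j(t)$ (depending only on $t$) that already lie in the domain $D_{h(t)}$ of the perturbed operator and are orthonormal for the inner product $(u,v)_{h(t)}=\int_\Omega uv\,|\det h_x(t,\cdot)|$, and only then carries out a second Lyapunov--Schmidt step with the $t$-dependent projection $P(t)$ onto $\mathrm{span}\{\hat\varphi_j(t)\}$. Your observation that the $u_j(t,\lambda)=\phi_j+S(t,\lambda)\phi_j$ coming straight out of Theorem~\ref{tcaln} already satisfy the perturbed Neumann condition (from $F_2=0$), so that testing against them with the Jacobian weight gives a symmetric $N=QM$, bypasses that auxiliary construction entirely. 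Both routes land on a real-analytic, real-symmetric matrix pencil whose zeros in $\lambda$ are the perturbed eigenvalues; the paper's buys a basis independent of $\lambda$, yours buys economy.

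The genuine gap is in your extraction step. After Rellich produces analytic eigenpairs $(\nu_j(t),v_j(t))$ of $\tilde C(t)$, the ``scalar IFT'' you invoke cannot be made to work as stated: to solve $N(t,\lambda)c=0$ near $(0,\lambda_0,v_j(0))$ by the implicit function theorem one needs the Jacobian in $(\lambda,c)$ to be invertible, but $\partial_c\bigl[N(t,\lambda)c\bigr]\big|_{t=0,\lambda=\lambda_0}=N(0,\lambda_0)=0$, so that Jacobian has rank one. Equivalently, knowing that the diagonal entry $v_j(t)^{T}R^{-1/2}N(t,\lambda)R^{-1/2}v_j(t)$ vanishes does not force $\det N(t,\lambda)=0$. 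The paper's remedy, which applies verbatim to your $N$, is to skip the Rellich-then-correct manoeuvre and argue directly on $\det N(t,\lambda)=0$: for each real $t$ the roots are eigenvalues of a self-adjoint operator and hence real, and since $\det N$ is real-analytic with $\det N(0,\lambda)=(\lambda-\lambda_0)^m$, the Puiseux expansions of the $m$ roots must have integer exponents (a branch with a genuine fractional exponent would force non-real conjugate branches among the $m$ roots for real $t$). This yields analytic $\mu_j(t)$; an analytic null vector $c^j(t)$ of the one-parameter symmetric family $t\mapsto N(t,\mu_j(t))$ then follows from the ordinary one-parameter Rellich theorem.
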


%\begin{obs}
%Para provar a existência das curvas analíticas de autovalores bastaria considerar a equação $Det M(h(t,.),\lambda)=0$ obtida na demonstração do teorema anterior e utilizar o Teorema de Puiseux para funções analíticas (ver apêndice). Assim, dada uma curva de autovalores $\lambda(t)$, vemos que $0$ é autovalor para a matriz $M(t,\lambda(t))$ para cada $t$, portanto basta obter uma curva analítica de autovetores $c(t)$ associada ao autovalor $0$. Porém como a matriz $M$ não é necessariamente simétrica não temos  garantia da existência da curva analítica $c(t)$. Faremos uma construção ligeiramente diferente para a matriz $M$ dada na demonstração do teorema anterior de forma que a nova matriz  seja simétrica, desta forma conseguimos garantir a existência da curva $c(t)$ de autovetores, associados ao autovalor $0$ da matriz $M$. Para este fim utilizaremos os resultados encontrados em \cite{kato} para perturbação de operadores em espaços de dimensão finita.
%Para ver isto basta considerar o exemplo:
%$$
%A(x)=\left[\begin{array}{ccc}
%x & 1\\
%0 & 0
%\end{array}
%\right],
%$$ 
%neste caso $0$ é autovalor para todo $x\in\mathbb{C}$ e a autoprojeção associada é 
%$$
%P(x)=\left[\begin{array}{ccc}
%0 & -x^{-1}\\
%0 & 1
%\end{array}
%\right]
%$$
%para $x\neq0$.
%\end{obs}
 \begin{proof}
% Para provar a existência das curvas analíticas de autovalores bastaria considerar a equação $Det M(h(t,.),\lambda)=0$ obtida na demonstração do teorema anterior e utilizar o Teorema de Puiseux para funções analíticas (ver \cite{wall}). Porém a existência de uma curva analítica de autofunções associada não segue tão diretamente. A razão para isto é a seguinte: o teorema anterior mostra que as autofunções associadas a uma dada curva de autovalores $\lambda(t)$ são dadas por $u=\sum_{j=1}^{m}c_j(\phi_j+S(h(t,.),\lambda(t))\phi_j)$, onde $c=(c_1,...,c_m)\in\mathbb{R}^m$ é um autovetor da matriz $M(h(t,.),\lambda(t))$ associado ao autovalor $0$ e claramente $c$ depende de $t$. Portanto, já que a função $S(h(t,.),\lambda(t))\phi_j$ é analítica em $t$, a curva de autofunções $u(t)$ é analítica se, e somente se $c(t)$ é analítica também. No entanto, como a matriz $M$ não é simétrica não podemos obter o resultado diretamente da Teoria de Perturbação Analítica de Operadores Lineares em Dimensão Finita. Para contornarmos o problema faremos uma construção, seguindo a mesma linha do teorema anterior, de forma que a matriz $M$ seja simétrica. 

 Let  $\{\phi_j\}_{j=1}^m$ be an orthonormal basis of eigenfunctions of (\ref{plr}) associated to  $\lambda_0$.
 For each  $j=1,..m$, consider the problem 
           \begin{equation}\label{pfp}
                 \left\{
                       \begin{array}{lccc}
                             (L +\lambda_0)u=0,& in \ \ \Omega;\\
                              h^*\frac{\partial }{\partial N_h}h^{*-1}(\phi_j+ u)=0, & \ \ on\ \ \partial \Omega;\\
                              Pu=\sum_{j=1}^m\phi_j\int_{\Omega}\phi_ju=0.
                       \end{array}
                            \right.
            \end{equation}

 Consider the map
$$
F^j: Diff^3(\Omega)\times H^2(\Omega)\longrightarrow 
 [\phi_1, \phi_2, \cdots, \phi_m]^{\bot}\times{\cal{R}}(P)\times H^{\frac{1}{2}}(\partial\Omega)
:$$
$$
F^j(h,\omega)=((L+\lambda_0)\omega, P\omega, h^*\frac{\partial }{\partial N_h}h^{*-1} (\phi_j+\omega)),
$$                             

where $[\phi_1, \phi_2, \cdots, \phi_m]^{\bot} $  is the orthogonal complement to  ${\cal N}(L+\lambda_0)$ (with homogeneous Neumann boundary condition) in $L^2(\Omega)$. Since  $\frac{\partial F^j}{\partial \omega}(i_{\Omega},0)$
 is an isomorphism,   the Implicit Function Theorem ensures the existence
 of a neighborhood  $\cal{V}$  of  $i_{\Omega}$ in ${\cal{C}}^3(\R^n,\R^n)$ and an analytic function $\omega_j(h)$ on  $\cal{V}$  such that  $\omega_j(h)$ is the unique solution of  $F^j(h,\omega)=0  $, 
 for $h \in \cal{V}$.
 
  In this way we obtain, for each  $h$ in  $\cal{V}$,  a set
 $\{\varphi_j(h)\}_{j=1}^m$, $\varphi_j(h)=\phi_j+\omega_j(h)$, 
of linearly independent solutions of  $(\ref{pfp})$. Using the 
 Gram-Schmidt method, we can produce a new set of solutions  
  $\{\hat{\varphi_j}(h)\}_{j=1}^m$  which is orthonormal with respect to 
  the inner product  $(u,v)_h=\int_{\Omega}uv \, deth_x\, dx$. We observe that the  $\hat{\varphi_j}(h)$ belong to the domain of the operator $h^*L h^{*-1}$, \quad
  $D_h=\{u\in H^{2}(\Omega), h^*\frac{\partial}{\partial N_{h}}h^{*-1}u=0\}$. Furthermore, since with this inner product this operator is self-adjoint, it follows that the matrix given by  $\int_{\Omega}\hat{\varphi}_jh^*L h^{*-1}\hat{\varphi}_k deth_xdx$ is symmetric.

Consider now 
 the family of diffeomorphisms  $h(t,x)=x+tV(x)$  for some 
 $V\in \mathcal{C}^{3}(\mathbb{R}^n,\mathbb{R}^n)$ and  the family of projections

$$P(t)u=\sum_{j=1}^m\hat{\varphi}_j(t)\int_{\Omega}u\hat{\varphi_j}(t)deth_x(t,.)\, dx.$$        
  
 Define the map
  \begin{eqnarray*}
 G_j= (G_{1,j}, G_{2,j}, G_{3,j})\ : & \!(-\epsilon, \epsilon)\times\mathbb{R}\times H^2(\Omega)\longrightarrow &L^2(\Omega)\times H^{\frac{3}{2}}(\Omega)\times L^2(\Omega)\nonumber\\
 \end{eqnarray*}     
where            
            \begin{equation} \label{eqnG}
                 \left\{
                       \begin{array}{lcc}
                           G_{1,j}=(I-P(t))(h^*(t,.)(L+\lambda)h^{*-1}(t,.))(\omega+\hat{\varphi}_j(t))\\
                           G_{2,j}=h^*\frac{\partial }{\partial N_h}h^{*-1}\omega;\\
                           G_{3,j}= P(t)\omega,
                       \end{array}
                            \right.
            \end{equation}    
 Again by the Implicit Function Theorem, there exists a neighborhood                  $\cal{U}$  of $(0, \lambda_0)$  and an application $\omega_j(t,\lambda)$  which gives the unique solution of  $G_j(t,\lambda,\omega)=(0,0,0)$ in  $\cal{U}$. 
Since, for small $t$ and $\lambda$ near $\lambda_0$, the operator 
 $(I-P(t))(h^*(t,.)(L+\lambda)h^{*-1}(t,.))(\omega+\hat{\varphi}_j(t))$
 with  $ h^*\frac{\partial }{\partial N_h}h^{*-1}\omega =0 $ has an $m$ dimensional kernel, the solutions of the first and second equations will be of the form $ \sum_{j=1}^m c_j ( \hat{\varphi}_j(t)+\omega_j(t,\lambda))$.
 Therefore, a  number  $\lambda$  will be an eigenvalue  of  $(\ref{plrp})_{h(t,.)}$
  with eigenfunction
 $   \sum_{j=1}^m c_j (\hat{\varphi}_j(t)+\omega_j(t,\lambda))   $ if, and only if   $c=(c_1,...,c_m)$ is a nonzero vector such that
  $M(t,\lambda)c=0$, where 
\begin{equation*}
M_{ij}(t,\lambda)= \int_{\Omega}\hat{\varphi}_i(t)h^*(t,.)(L+\lambda)h^{*-1}(t,.)(\hat{\varphi}_j(t)+\omega_j(t,\lambda))deth_x(t,.).
\end{equation*}
that is , $\lambda$ is an eigenvalue if and only if  $DetM(t,\lambda)=0$.
 Now  
\begin{eqnarray*} M(t,\lambda) & = &\int_{\Omega} \left( 
 \hat{\varphi}_i(t) +\omega_i(t,\lambda) \right) 
  h^*(t,.) (L+\lambda)h^{*-1}(t,.)(\hat{\varphi}_j(t)+\omega_j(t,\lambda))deth_x(t,.) \\ 
 & - & \int_{\Omega} \omega_i(t,\lambda)(L+\lambda)h^{*-1}(t,.)(\hat{\varphi}_j(t)+\omega_j(t,\lambda))deth_x(t,.)   .
\end{eqnarray*}
 and the last term is zero by the first and third equations in  (\ref{eqnG}).
 It follows that M is symmetric  and Puiseux theorem \cite{wall} then ensures the existence of $m$ analytic curves
 $\lambda_1(t), \lambda_2(t), \cdots, \lambda_m(t)$  giving the $m$ (not necessarily distinct)  solutions of
   $DetM(t,\lambda)=0$. Since   $M$ is symmetric for each curve
  $\lambda_l(t)$, there also exists an  analytic curve
  $\mathcal{C}^l(t)\in\mathbb{R}^m$   of solutions of  $M(t,\lambda_l)\mathcal{C}(t)=0$, with
 $\mathcal{C}^1(t), \mathcal{C}^2(t), \cdots, \mathcal{C}^m(t)$ linearly independent. 
    Therefore, $$\psi^l(t)=\sum_{j=1}^m\mathcal{C}^l_j(t)(\hat{\varphi}_j(t)+\omega_j(t,\lambda_l(t))), \quad  l=1, \cdots, m$$
  
 is an analytic curve of associated eigenfunctions.

\end{proof}

\begin{obs} 
The above proof is similar to the argument in \cite{hp} example 4.4. However, here we needed to first construct solutions for the auxiliary problem (\ref{pfp}) since, otherwise, we would have not obtained a symmetric matrix  $M$. This is due to the fact that now the domain of the  operator $h^*L h^{*-1}$ varies with
 $h$.
  \end{obs}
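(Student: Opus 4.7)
The plan is to reduce the eigenvalue problem for $(\ref{plrp})_{h(t,\cdot)}$ to a finite-dimensional one by Lyapunov--Schmidt, and then apply Puiseux's theorem to obtain analytic curves of eigenvalues from the determinantal equation. The essential subtlety is that the domain of the pulled-back Neumann Laplacian $h^{*}\Delta h^{*-1}$, namely $D_{h}=\{u\in H^{2}(\Omega):h^{*}(\partial/\partial N_{h})h^{*-1}u=0\}$, varies with $h$, so a direct Lyapunov--Schmidt reduction on the fixed unperturbed eigenspace would produce a non-symmetric reduced matrix, and Puiseux's theorem, which requires an analytic \emph{symmetric} matrix family to guarantee analytic eigenvalue curves, would not apply.

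To circumvent this, I would first, for each $j=1,\ldots,m$, correct $\phi_{j}$ so that it lies in $D_{h}$. Concretely, solving by the Implicit Function Theorem a problem of the form
\begin{equation*}
(L+\lambda_{0})\omega=0,\quad P\omega=0,\quad h^{*}\tfrac{\partial}{\partial N_{h}}h^{*-1}(\phi_{j}+\omega)=0
\end{equation*}
yields an analytic map $h\mapsto \omega_{j}(h)$ with $\omega_{j}(i_{\Omega})=0$, so that $\varphi_{j}(h)=\phi_{j}+\omega_{j}(h)\in D_{h}$. Gram--Schmidt orthonormalization of $\{\varphi_{j}(h)\}_{j=1}^{m}$ with respect to the weighted inner product $(u,v)_{h}=\int_{\Omega}uv\,\det h_{x}\,dx$ (the pullback under $h$ of the $L^{2}$ inner product on $h(\Omega)$, in which $h^{*}\Delta h^{*-1}$ is self-adjoint on $D_{h}$) produces $\hat{\varphi}_{j}(h)\in D_{h}$. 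With $h(t,x)=x+tV(x)$ analytic in $t$, the $\hat{\varphi}_{j}(t)$ depend analytically on $t$.

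Next, looking for eigenfunctions of the form $\omega+\hat{\varphi}_{j}(t)$, I would apply the IFT to a system splitting the problem into the projected equation off $\mathrm{span}\{\hat{\varphi}_{k}(t)\}$, the perturbed Neumann boundary condition, and the normalization $P(t)\omega=0$ (with $P(t)$ the weighted projection onto $\mathrm{span}\{\hat{\varphi}_{k}(t)\}$). This yields analytic correctors $\omega_{j}(t,\lambda)$. The eigenvalue equation then reduces to $\det M(t,\lambda)=0$ with
\begin{equation*}
M_{ij}(t,\lambda)=\int_{\Omega}\hat{\varphi}_{i}(t)\,h^{*}(L+\lambda)h^{*-1}(\hat{\varphi}_{j}(t)+\omega_{j}(t,\lambda))\det h_{x}\,dx.
\end{equation*}
Using that $P(t)\omega_{i}=0$ and that the first projected equation kills the complementary part, one may add $\omega_{i}(t,\lambda)$ to the left factor without changing $M_{ij}$; since then both factors lie in $D_{h(t,\cdot)}$ where $h^{*}L h^{*-1}$ is self-adjoint in the weighted inner product, $M(t,\lambda)$ is symmetric. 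Puiseux's theorem applied to the analytic symmetric family $M(t,\lambda)$ then furnishes $m$ analytic eigenvalue curves $\lambda_{l}(t)$ together with analytic null-vector curves $\mathcal{C}^{l}(t)$, from which the eigenfunction curves $\psi^{l}(t)=\sum_{j}\mathcal{C}^{l}_{j}(t)(\hat{\varphi}_{j}(t)+\omega_{j}(t,\lambda_{l}(t)))$ are assembled.

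The main obstacle is precisely the symmetry of $M(t,\lambda)$, which in turn forces the preparatory construction of the $\hat{\varphi}_{j}(h)$ lying in the $h$-dependent self-adjoint domain $D_{h}$. Without this adjustment, one cannot both (i) treat the perturbed operator as self-adjoint in a fixed natural inner product and (ii) obtain a symmetric reduced matrix in a single analytic framework; Puiseux's theorem would then be unavailable and one could only conclude continuity of the eigenvalue branches, not analyticity.
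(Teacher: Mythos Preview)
Your proposal is correct and follows essentially the same route as the paper's proof of Theorem~\ref{tecapln}: first correct the $\phi_{j}$ via the auxiliary problem~(\ref{pfp}) so they lie in the $h$-dependent domain $D_{h}$, Gram--Schmidt in the weighted inner product, then perform the Lyapunov--Schmidt reduction to a symmetric matrix $M(t,\lambda)$ and invoke Puiseux's theorem. Your explanation of why symmetry fails without the preliminary step---namely that the self-adjointness of $h^{*}Lh^{*-1}$ holds only on $D_{h}$, which moves with $h$---is exactly the point of the remark.
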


Once we know the eigenvalues are analytic in the parameter $t$, its first and second derivatives can be obtained  using the methods developed in \cite{hp}.

\begin{coro}\label{ed}
Let  $\lambda_0$ be an eigenvalue of multiplicity $m$  of  (\ref{plrp}), with $\beta=c=0$  and  $h(t,.)+ I + t V(\cdot)$, with 
 $V$ of class $\mathcal{C}^2$  a curve of diffeomorphisms. Then,
 if $\lambda(t)$ is one of the  curves of eigenvalues given by
 theorem \ref{tecapln},  the derivatives
     $\dot{\lambda}= \frac{d}{dt}\lambda|_{t=0}$, $
 \ddot{\lambda}=\frac{d^2}{dt^2}\lambda|_{t=0}$
 satisfy  
$$
\begin{array}{l}
(\dot{\lambda}I+\stackrel{\circ}{M})c=0\\
(\ddot{\lambda} I+\stackrel{\circ\circ}{M})c+2(\dot{\lambda}I+\stackrel{\circ}{M})\dot{c}\;=0,
\end{array}
$$
for some $c$ and $\dot{c}$ in   $\mathbb{R}^n$. 
The matrices  $\stackrel{\circ}{M}$, $\stackrel{\circ\circ}{M}$
 are given by  
\begin{eqnarray}
\stackrel{\circ}{M}_{k,j} & = &\int_{\partial\Omega}\sigma(\nabla_{\partial\Omega}\phi_k\cdot\nabla_{\partial\Omega}\phi_j-\lambda_0\phi_k\phi_j)\nonumber\\
\stackrel{\circ\circ}{M}_{k,j}& = &\int_{\partial\Omega}2\sigma \dot{Q}_{jk}+\sigma^2\frac{\partial}{\partial N}Q_{jk}+\left[\frac{\partial\sigma}{\partial t}+\sigma\frac{\partial\sigma}{\partial N} + H\sigma^2\right]Q_{jk},\nonumber,
\end{eqnarray}
$
\begin{array}{l}
Q_{jk}=\nabla_{\partial\Omega}\phi_j\cdot\nabla_{\partial\Omega}\phi_k-\lambda_0\phi_j\phi_k\\
\dot{Q}_{jk}=\nabla_{\partial\Omega}\phi_k\cdot\nabla_{\partial\Omega}\dot{\phi_j}-\dot{\lambda}\phi_k\phi_j-\lambda\phi_k\dot{\phi_j},
\end{array}
$\vspace{0.3cm}\\
 where $\{\phi_j\}_{j=1}^m$  is an orthonormal basis for the eigenspace
 associated to  $\lambda_0$  and  $\dot{\phi_j}$ satisfies  \\ $\dot{\phi}_j\bot span[\phi_i]_1^{m}$,
\begin{equation*}
\left\{
\begin{array}{lc}
(\Delta+\lambda_0)\dot{\phi}_j\ \in span[\phi_i]_1^{m}; \\
\frac{\partial\dot{\phi}_j}{\partial N}=(div_{\partial\Omega}(\sigma\nabla_{\partial\Omega}\phi_j)+\lambda_0\sigma\phi_j),&  \ \ \textrm{on} \ \ \partial \Omega . 
\end{array}
\right.
\end{equation*}
\end{coro}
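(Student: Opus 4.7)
The starting point is the characteristic equation $M(t,\lambda(t))c(t)=0$ derived in the proof of Theorem \ref{tecapln}, where
\[
M_{kj}(t,\lambda)=\int_\Omega\hat\varphi_k(t)\,h^*(L+\lambda)h^{*-1}(\hat\varphi_j(t)+\omega_j(t,\lambda))\det h_x\,dx.
\]
Before differentiating, I would verify two identities at the base point. First, $M(0,\lambda_0)=0$, since $\hat\varphi_j(0)=\phi_j$, $\omega_j(0,\lambda_0)=0$ and $(L+\lambda_0)\phi_j=0$. Second, $\partial_\lambda M|_{(0,\lambda_0)}=I$: the direct term yields $\int_\Omega\phi_k\phi_j=\delta_{kj}$ by orthonormality, while the term $\int_\Omega\phi_k(L+\lambda_0)\partial_\lambda\omega_j$ vanishes because Green's identity (with $(L+\lambda_0)\phi_k=0$ and $\partial_N\phi_k=0$) reduces it to a boundary integral that is killed by the homogeneous Neumann condition $\partial_N\partial_\lambda\omega_j|_{t=0}=0$.

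With these identities in hand, the general structure follows by the chain rule. Differentiating $M(t,\lambda(t))c(t)=0$ once at $t=0$ gives $(\stackrel{\circ}{M}+\dot\lambda I)c=0$, with $\stackrel{\circ}{M}:=\partial_tM|_{(0,\lambda_0)}$. Differentiating again produces
\[
\bigl[\partial_{tt}M+2\dot\lambda\partial_{t\lambda}M+\dot\lambda^2\partial_{\lambda\lambda}M+\ddot\lambda\,\partial_\lambda M\bigr]c+2(\partial_tM+\dot\lambda\,\partial_\lambda M)\dot c+M\ddot c=0,
\]
and using $M(0,\lambda_0)=0$, $\partial_\lambda M|_{(0,\lambda_0)}=I$ and setting $\stackrel{\circ\circ}{M}:=\bigl[\partial_{tt}M+2\dot\lambda\partial_{t\lambda}M+\dot\lambda^2\partial_{\lambda\lambda}M\bigr]_{(0,\lambda_0)}$ recovers the stated equation $(\ddot\lambda I+\stackrel{\circ\circ}{M})c+2(\dot\lambda I+\stackrel{\circ}{M})\dot c=0$.

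To extract the explicit boundary integrands I would pass to Eulerian variables $y=h(t,x)$ and apply Green's identity together with the pulled-back Neumann condition to write
\[
M_{kj}(t,\lambda)=-\int_{\Omega(t)}\nabla u_k\cdot\nabla u_j\,dy+\lambda\int_{\Omega(t)}u_k u_j\,dy,
\]
where $u_j=(\hat\varphi_j+\omega_j)\circ h^{-1}$. For the first variation, the transport theorem plus a further integration by parts collapse all interior material-derivative contributions (using $\Delta\phi_j=-\lambda_0\phi_j$ and $\partial_N\phi_j=0$), leaving precisely $\stackrel{\circ}{M}_{kj}=\int_{\partial\Omega}\sigma(\nabla_{\partial\Omega}\phi_k\cdot\nabla_{\partial\Omega}\phi_j-\lambda_0\phi_k\phi_j)$, the classical Hadamard-type formula.

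The hard step is $\stackrel{\circ\circ}{M}$. A second application of the surface transport theorem (the one stated just before Lemma \ref{fdn}) generates three terms: $2\sigma\dot Q_{jk}$ from differentiating the integrand, $\sigma^2\partial_N Q_{jk}$ from the boundary flux, and $[\partial_t\sigma+\sigma\,\partial_N\sigma+H\sigma^2]Q_{jk}$ from the second variation of the oriented area element (where the $H\sigma^2$ piece is exactly the mean-curvature contribution). The material-derivative terms $-\dot\lambda\phi_k\phi_j-\lambda_0\phi_k\dot\phi_j$ inside $\dot Q_{jk}$ are the trace of the cross-terms $2\dot\lambda\partial_{t\lambda}M$ and $\dot\lambda^2\partial_{\lambda\lambda}M$ produced by the chain rule, reflecting both the explicit $\lambda$ in $(L+\lambda)$ and the implicit dependence through $\omega_j(t,\lambda)$. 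The auxiliary boundary value problem characterizing $\dot\phi_j$ is obtained by differentiating the pulled-back Neumann condition on $u_j$ at $t=0$, invoking Lemma \ref{fdn} to handle $\partial_t N_{\Omega(t)}$ and the identity $\Delta\phi_j=-\lambda_0\phi_j$ to simplify the result to $\partial_N\dot\phi_j=div_{\partial\Omega}(\sigma\nabla_{\partial\Omega}\phi_j)+\lambda_0\sigma\phi_j$. The principal obstacle throughout is the bookkeeping: one must carefully track which interior contributions cancel via Green's identity, which survive on the boundary, and how the $\dot\lambda$-weighted mixed partial derivatives of $M$ reassemble into the compact expression $\stackrel{\circ\circ}{M}$ given in the statement.
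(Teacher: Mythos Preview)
Your approach is correct in outline but genuinely different from the paper's. The paper does \emph{not} differentiate the Lyapunov--Schmidt matrix $M(t,\lambda)$ from Theorem~\ref{tecapln}; instead it works entirely in Eulerian variables from the start, differentiating the eigenpair relation $(\Delta+\lambda(t))v(t)=0$, $\partial_{N_{\Omega_t}}v=0$ directly with respect to $t$. This immediately yields the boundary-value problem satisfied by $\dot v$ (and hence by the $\dot\phi_j$), after which one multiplies by $\phi_k$, integrates, and applies Green's formula to read off $\stackrel{\circ}{M}$. For the second derivative the paper differentiates the first-order system once more, producing the interior equation $(\Delta+\lambda)\ddot v+2\dot\lambda\dot v+\ddot\lambda v=0$ and a rather involved boundary condition for $\ddot v$; the formula for $\stackrel{\circ\circ}{M}$ then drops out of a lengthy sequence of integrations by parts on $\partial\Omega$, together with the surface transport identity applied to $\int_{\partial\Omega_t}\phi_k\,\mathrm{div}_{\partial\Omega_t}(\sigma\nabla_{\partial\Omega_t}v)$.

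Your route---chain rule on $M(t,\lambda(t))c(t)=0$ followed by computing $\partial_t M$, $\partial_{tt}M$, $\partial_{t\lambda}M$, $\partial_{\lambda\lambda}M$ via transport theorems---is more structural: the two displayed equations in the statement fall out of pure algebra once $\partial_\lambda M|_0=I$ is known, and the analytic work is postponed to evaluating the partial derivatives. What you gain is a cleaner separation between the algebraic skeleton and the Hadamard-type calculus; what you lose is directness in identifying $\dot\phi_j$ and in seeing how each piece of $\stackrel{\circ\circ}{M}$ arises (your attribution of the $-\dot\lambda\phi_k\phi_j$ term in $\dot Q_{jk}$ to the cross partials $\partial_{t\lambda}M$, $\partial_{\lambda\lambda}M$ is plausible but would require careful verification, since $\dot\phi_j$ itself depends on the curve and not just on a single partial). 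The paper's direct computation, while longer, makes the provenance of every boundary term explicit and avoids having to reconcile the auxiliary functions $\hat\varphi_j(t)$, $\omega_j(t,\lambda)$ with the more natural object $\dot v$.
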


\begin{proof}

  We know that each eigenpair $(\lambda(t), v(t))$ satisfies
 
$$
\left\{
\begin{array}{cc}
(\Delta + \lambda(t))v(t,.)=0,& \textrm{in} \ \ \Omega_t;\\
\dfrac{\partial v(t,.)}{\partial N_{\Omega_t}}=0, & \ \ \textrm{on} \ \ \partial \Omega_t;
\end{array}
\right.
$$ 

Differentiating the first equation with respect to  $t$, at  $y=h(t,x)\in\Omega_t$, we obtain

$$
(\Delta + \lambda(t))(\frac{\partial}{\partial t}v(t,y)) +(\frac{d}{d t})\lambda(t)v(t,y)= 0
$$
From now on, we use the notation  $\stackrel{\cdot}{v}$ for the derivative
 $\frac{\partial}{\partial t} v(t,.)$ and also for any derivative with respect to   $t$.

 In the  boundary we have, for each $x\in \partial\Omega$

$$
\dfrac{\partial v(t,h(t,x))}{\partial N_{\Omega_t}}= N_{\Omega_t}(t,h(t,x))\cdot\nabla_y v(t,h(t,x)) =0
$$
where $\nabla_y$ is the derivative in the variable  $y=h(t,x)$.
 Differentiating with respect to 
  $t$, we obtain 

\begin{eqnarray}
0 & = & \frac{d}{dt}\left[\dfrac{\partial v(t,h(t,x))}{\partial N_{\Omega_t}}\right] \nonumber \\
 & = & \frac{d}{dt}[N_{\Omega_t}(t,h(t,x))]\cdot\nabla_y v(t,h(t,x))\nonumber \\ 
                                                                             &   &+N_{\Omega_t}(t,h(t,x))\cdot\frac{d}{dt}[\nabla_y                                                                                                       v(t,h(t,x))]\nonumber \\
                                                                             & = &\dot{N}_{\Omega_t}(t,y)\cdot\nabla_y                                                                                                       v(t,y)+N_{\Omega_t}(t,y)\cdot\nabla_y                                                                                                                  \dot{v}(t,y)+\nonumber \\                                                                                                            &   &\underbrace{                                                                                                                                           N_{\Omega_t}(t,y)\cdot\nabla_y^2v(t,y)V                                                                                         +\left(\nabla_y N_{\Omega_t}(t,y)V                                                                                        \right)\cdot\nabla_y                                                                                                                         v(t,y)}_{V(t,y)\cdot\nabla_y(N_{\Omega_t}\cdot\nabla_y v)} \nonumber                    
\end{eqnarray}

 From lemma \ref{fdn},

$$
\dot{N}_{\Omega_t}=-\nabla_{\partial\Omega t}\sigma - \sigma\dfrac{\partial N_{\Omega_t}}{\partial N_{\Omega_t}}
$$
where $\sigma = V(t,y)\cdot N_{\Omega_t}$. Since  $\dfrac{\partial v}{\partial N_{\Omega_t}}=0$ on $\partial\Omega_t$, it follows that  
$$
V\cdot\nabla_y \left(\dfrac{\partial v}{\partial N_{\Omega_t}}\right)=\sigma\frac{\partial}{\partial N_{\Omega_t}}\left(\dfrac{\partial v}{\partial N_{\Omega_t}}\right),\ \ 
\textrm{on} \ \ \partial\Omega_t.
$$
Thus
\begin{eqnarray}
\dfrac{\partial \dot{v}}{\partial N_{\Omega_t}}-\nabla_{\partial\Omega_t}\sigma\cdot\nabla_{\partial\Omega_t} v+\sigma\frac{\partial}{\partial N_{\Omega_t}}\left(\dfrac{\partial v}{\partial N_{\Omega_t}}\right)& = & 0\ \  \textrm{on}  \ \ \partial\Omega_t.\nonumber
\end{eqnarray}

%$
%\dfrac{\partial\stackrel{\cdot}{v}}{\partial N}= \nabla_{\partial\Omega}\sigma\cdot\nabla v - \sigma\dfrac{\partial^2 v}{\partial N_{\Omega}^2}
%$.
Now, using Theorem  \ref{tods}, we obtain

$$ 
\dfrac{\partial\dot{v}}{\partial N_{\Omega_t}}=div_{\partial\Omega_t}(\sigma\nabla_{\partial\Omega_t}v)+ \lambda(t)\sigma v,\ \ \textrm{on} \ \ \partial\Omega_t.
$$

Therefore $\dot{v}$ must satisfy the problem
\begin{equation}
\left\{
\begin{array}{lc}
(\Delta + \lambda(t))\dot{v} +\dot{\lambda}(t)v=0,& \ \  \textrm{in} \ \ \Omega_t;\\
\frac{\partial \dot{v}}{\partial N_{\Omega_t}}=div_{\partial\Omega_t}(\sigma\nabla_{\partial\Omega_t}v)+\lambda(t)\sigma v,&  \ \   \textrm{on} \ \ \partial \Omega_t;
\end{array}
\right.
\label{1deq}
\end{equation}

 We know that $v(0,.)=\sum_{j=1}^m c_j\phi_j$  for some scalars $c_j$, not all zero. Multiplying the equation  (\ref{1deq}) with $t=0$ by $\phi_k$  and integrating, we obtain
\begin{eqnarray*}
\dot{\lambda}c_k & = & -\int_{\Omega}\phi_k(\Delta+\lambda_0)\dot{v} \\ \nonumber
                             & = & \int_{\Omega}\dot{v}(\Delta+\lambda_0)\phi_k - \phi_k(\Delta+\lambda_0)\dot{v}   \\ \nonumber
                             & = & -\int_{\partial_\Omega}\phi_k(div_{\partial\Omega}(\sigma\nabla_{\partial\Omega}v)+\lambda_0\sigma v)) \\ \nonumber
                             & = & \int_{\partial\Omega}\sigma(\nabla_{\partial\Omega}\phi_k\cdot\nabla_{\partial\Omega}v-\lambda_0\phi_kv) \\\nonumber
                                                                                                                                                                 & = &\sum_{j=1}^m c_{j}\int_{\partial\Omega}\sigma(\nabla_{\partial\Omega}\phi_k
                                                                                                                                                            \cdot\nabla_{\partial\Omega}\phi_j-\lambda_0\phi_k\phi_j).
\end{eqnarray*}
Writing  $c=(c_1,c_2,...,c_m)$ and

$$
\stackrel{\circ}{M}_{k,j}=\int_{\partial\Omega}\sigma(\nabla_{\partial\Omega}\phi_k\cdot\nabla_{\partial\Omega}\phi_j-\lambda_0\phi_k\phi_j)
$$
 we see that  $(\stackrel{\circ}{M}-\dot{\lambda})c=0$ and, therefore, the derivative $\dot{\lambda}(t)$ is an eigenvalue  of the matrix   $\stackrel{\circ}{M}$.

 Now, to compute  $\ddot{\lambda}$, we need to differentiate  (\ref{1deq})
 once again. We start with the boundary condition
\begin{equation}
\frac{\partial \dot{v}}{\partial N_{\Omega_t}}-(div_{\partial\Omega_t}(\sigma\nabla_{\partial\Omega_t}v)+\lambda(t)\sigma v)=0
\label{fronteira}
\end{equation}
If $f(t,h(t,x))=0$, $\forall x\in \front{}$, with  $f$, we obtain, differentiating with respect to $t$ 
$$
\dot{f}(0,x)+\sigma \derivada{}{f}{N}(0,x)=0,\ \ on \,\front{}.
$$
 Applying this formula in the equation  (\ref{fronteira}), it follows that
\begin{eqnarray}
\dot{f}(0,x) & =               &\derivada{}{\ddot{v}}{N}-\nablafront{}\sigma\cdot\nablafront{}\dot{v}-\left[\derivada{}{}{t}\divfront{t}(\sigma\nablafront{t}v)+\derivada{}{}{t}(\sigma\lambda v)\right]\nonumber \\  
\sigma\derivada{}{f}{N}(0,x) & = & \sigma\derivada{2}{\dot{v}}{N}-\sigma\left[ \derivada{}{}{N}\divfront{}(\sigma\nablafront{}v)+\derivada{}{}{N}(\sigma\lambda v)\right].\nonumber
\end{eqnarray}
Thus

\begin{eqnarray}
\derivada{}{\ddot{v}}{N}& = & \nablafront{}\sigma\cdot\nablafront{}\dot{v}-\sigma\derivada{2}{\dot{v}}{N}+\sigma(\dot{\lambda}v+\lambda\dot{v})+\left[ \derivada{}{\sigma}{t}+\sigma\derivada{}{\sigma}{N}\right]\lambda v+ \nonumber \\
                       &   & +\derivada{}{}{t}\divfront{t}(\sigma\nablafront{t}v)+\sigma\derivada{}{}{N}\divfront{}(\sigma\nablafront{}v)\nonumber.    
\end{eqnarray}
Multiplying the equation (\ref{fronteira}) by  $-\sigma H$
 and summing with the above equation, we obtain the  boundary condition

\begin{eqnarray*}
\derivada{}{\dpt{v}}{N}& = & \divfront{}(\sigma\nablafront{}{\dot{v}})+2\sigma(\dot{\lambda}v+\dot{v}\lambda)+\left[\derivada{}{\sigma}{t}+\sigma\derivada{}{\sigma}{N}+\sigma^2H\right]\lambda v +\nonumber \\
                       &   &\,\derivada{}{}{t}\divfront{t}(\sigma\nablafront{t}v)+ \sigma\derivada{}{}{N}\divfront{}(\sigma\nablafront{}{v})+\sigma H\divfront{}(\sigma\nablafront{}v)
\end{eqnarray*}

 Now, differentiating the equation in the interior, we obtain 
\begin{eqnarray}
(\Delta+\lambda)\ddot{v}+2\dot{\lambda}\dot{v}+\ddot{\lambda}v=0.
\label{interior}
\end{eqnarray}

Thus, to compute the second derivative, we need know  $\umpt{v}$.
To this aim, we first observe that there is a unique  $\dot{\phi}_j\in H^2(\Omega)$, such that  $\dot{\phi}_j\bot[\phi_1, \phi_2, \cdots, \phi_m]$
\begin{equation*}
\left\{
\begin{array}{lc}
(\Delta+\lambda)\dot{\phi}_j\ \in span[\phi_i]_1^{m}\\
\derivada{}{\dot{\phi}_j}{N}=(div_{\partial\Omega}(\sigma\nabla_{\partial\Omega}\phi_j)+\lambda_0\sigma\phi_j),&  \ \ on\ \ \partial \Omega.
\end{array}
\right.
\end{equation*}
 Thus  $\dot{v}\!|_{t=0}=\sum_{j=i}^m c_j\dot{\phi}_j+\dot{c}_j\phi_j$, where the $\dot{c}_j$ are not all zero and the  $c_j$ as before.
Multiplying the equation (\ref{interior}) by  $\phi_k$ and integrating in $\Omega$, we have

\begin{eqnarray*}
 \ddot{\lambda}c_k+2\dot{\lambda}\dot{c}_k &= & -\int_{\front{}} \phi_k\derivada{}{\ddot{v}}{N}  \nonumber \\
 & = & -  \int_{\front{}}\phi_k\left(\divfront{}(\sigma\nablafront{}\dot{v})+2\sigma(\dot{\lambda} v+\lambda\dot{v})+\left[ \derivada{}{\sigma}{t}+\sigma\derivada{}{\sigma}{N}+\sigma^2H\right]\lambda v \right) \nonumber \\ 
                                              &   & - \int_{\front{}}\phi_k \derivada{}{}{t}\divfront{}(\sigma\nablafront{} v) + \phi_k\sigma\derivada{}{}{N}\divfront{}(\sigma\nablafront{} v)+\nonumber\\
                                              &   & +\sigma H\phi_k\divfront{}(\sigma\nablafront{}v).
\end{eqnarray*}

 It is convenient to write this expression in a different form. We split the computation in two parts.  We call  $I$ and $II$  the first and second integrals and start with the second.

Extending $\phi_k$ arbitrarily in a neighborhood of $\Omega$, we observe  that
\begin{eqnarray}
\dfrac{d}{dt}\left[\int_{\front{t}}\phi_k\divfront{t}(\sigma\nablafront{t}v)\right]\Big|_{t=0}& = &\int_{\front{}}\phi_k\derivada{}{}{t}\divfront{t}(\sigma\nablafront{t}v)\big|_{t=0}\nonumber\\ &  &+\sigma\derivada{}{}{N}(\phi_k\divfront{}(\sigma\nablafront{}v))\nonumber\\
&  &+\sigma H\phi_k\divfront{}(\sigma\nablafront{}v)=II.\nonumber 
\end{eqnarray}
On the other hand
 
$$
\int_{\front{t}}\phi_k\divfront{t}(\sigma\nablafront{t}v)=-\int_{\front{t}}\sigma\nablafront{t}\phi_k\cdot
\nablafront{t}v.
$$
 Thus
\begin{eqnarray}
II& = &-\dfrac{d}{dt}\left(\int_{\front{t}}\sigma\nablafront{t}\phi_k\cdot\nablafront{t}v\right)\Big|_{t=0}=-\int_{\front{}}\derivada{}{}{t}(\sigma\nablafront{t}\phi_k\cdot\nablafront{t}v)\Big|_{t=0}\nonumber \\
 &   & -\int_{\front{}}\sigma\derivada{}{}{N}(\sigma\nablafront{}\phi_k\cdot\nablafront{}v)-\sigma^2H\nablafront{}\phi_k\cdot\nablafront{}v.\nonumber
\end{eqnarray}
Now
\begin{eqnarray}
\derivada{}{}{t}\left(\nablafront{t}\phi_k\cdot\nablafront{t}v\right)\Big|_{t=0}& = &\left(\derivada{}{}{t}\nablafront{t}\phi_k\Big|_{t=0}\right)\cdot\nablafront{}v+\left(\derivada{}{}{t}\nablafront{t}v\Big|_{t=0}\right)\cdot\nablafront{}\phi_k\nonumber \\
& = &\left[\nablafront{}\left(\derivada{}{\phi_k}{t}\right)-\derivada{}{}{t}\left(\derivada{}{\phi_k}{N}\right)N-\derivada{}{\phi_k}{N}\dot{N}\right]\cdot\nablafront{}v+\nonumber \\
& + & \left[\nablafront{}\left(\derivada{}{v}{t}\right)-\derivada{}{}{t}\left(\derivada{}{v}{N}\right)N-\derivada{}{v}{N}\dot{N}\right]\cdot\nablafront{}\phi_k\nonumber \\
& = &\nablafront{}\dot{v}\cdot\nablafront{}\phi_k\nonumber.
\end{eqnarray}
It follows that 

\begin{eqnarray}
II
& = &
-\int_{\front{}}\left[ \derivada{}{\sigma}{t}+\sigma\derivada{}{\sigma}{N}+\sigma^2H\right]\nablafront{}\phi_k\cdot\nablafront{}v+\sigma^2\derivada{}{}{N}(\nablafront{}\phi_k\cdot\nablafront{}v)\nonumber \\
&   &-\int_{\front{}}\sigma\nablafront{}\phi_k\cdot\nablafront{}\dot{v}\nonumber.
\end{eqnarray}
 For the first term in the integral  $I$, we have
  $$\int_{\front{}}\phi_k\divfront{}(\sigma\nablafront{}\dot{v})=-\int_{\front{}}\sigma\nablafront{}\phi_k\cdot\nablafront{}\dot{v}.$$
Thus
\begin{eqnarray}
\int_{\front{}}\phi_k\derivada{}{\ddot{v}}{N}& = & -\int_{\front{}}2\sigma(\nablafront{}\phi_k\cdot\nablafront{}\dot{v}-\lambda_0\phi_k\dot{v}-\dot{\lambda}\phi_kv)\nonumber\\
&   &-\int_{\front{}}\sigma^2\derivada{}{}{N}(\nablafront{}\phi_k\cdot\nablafront{}v-\lambda_0\phi_kv)\nonumber \\ &   &-\int_{\front{}}\left[\derivada{}{\sigma}{t}+\sigma\derivada{}{\sigma}{N}+\sigma^2H\right](\nablafront{}\phi_k\cdot\nablafront{}v-\lambda_0\phi_kv)\nonumber
\end{eqnarray}
Recalling that  $\dot{v}\!|_{t=0}=\sum_{j=i}^m c_j\dot{\phi}_j+\dot{c}_j\phi_j$, $v=\sum_{j=1}^m c_j\phi_k$ and
$$
\ddot{\lambda}c_k+2\dot{\lambda}\dot{c}_k=-\int_{\front{}} \phi_k\derivada{}{\dot{v}}{N},
$$
we conclude that the possible values of  $\ddot{\lambda}$
 are given by the following equations in  $\mathbb{R}^m$:

$
\begin{array}{l}
(\ddot{\lambda}I+\stackrel{\circ\circ}{M})c+2(\dot{\lambda}I+\stackrel{\circ}{M})\dot{c}\;=0\\
(\dot{\lambda}I+\stackrel{\circ}{M})c=0
\end{array}
$
\linebreak
 where  $\stackrel{\circ}{M}$ was given above and 
\begin{eqnarray}
\stackrel{\circ\circ}{M}_{j,k}& = &\int_{\front{}}2\sigma \dot{Q}_{jk}+\sigma^2\derivada{}{}{N}Q_{jk}+\left[\derivada{}{\sigma}{t}+\sigma\derivada{}{\sigma}{N} + H\sigma^2\right]Q_{jk},\nonumber
\end{eqnarray}
\linebreak
$
\begin{array}{l}
Q_{jk}=\nablafront{}\phi_j\cdot\nablafront{}\phi_k-\lambda_0\phi_j\phi_k\\
\dot{Q}_{jk}=\nablafront{}\phi_k\cdot\nablafront{}\dot{\phi_j}-\dot{\lambda}\phi_k\phi_j-\lambda\phi_k\dot{\phi_j}.
\end{array}
$
\end{proof}

%\begin{obs}
%Note que utilizando apenas a expressão para a condição de fronteira encontrada em \ref{fronteira2} não é possível ver que as derivadas $\umpt{\lambda}, \dpt{\lambda}$ satisfazem uma equação em $\mathbb{R}^m$ inteiramente análoga a encontrada para as derivadas das curvas de autovalores do problema de Laplace com condição de Dirichlet. 
%\end{obs}

\begin{obs}\label{oed}
 It is not difficult to see that the matrix  $\stackrel{\circ\circ}{M}_{j,k}$
is symmetric. This will be important in the sequel.

\end{obs}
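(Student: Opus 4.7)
The plan is to isolate the terms in $\stackrel{\circ\circ}{M}_{j,k}$ whose symmetry is immediate from those which require work. Note first that $Q_{jk} = \nabla_{\partial\Omega}\phi_j\cdot\nabla_{\partial\Omega}\phi_k - \lambda_0\phi_j\phi_k$ is manifestly symmetric in $(j,k)$, so both $\int_{\partial\Omega}\sigma^2 \frac{\partial}{\partial N}Q_{jk}$ and $\int_{\partial\Omega}\bigl[\tfrac{\partial\sigma}{\partial t} + \sigma\tfrac{\partial\sigma}{\partial N} + H\sigma^2\bigr]Q_{jk}$ are symmetric without further argument. The whole issue reduces to showing that $\int_{\partial\Omega}2\sigma\,\dot{Q}_{jk}$ is symmetric, and in turn the $\dot{\lambda}\phi_j\phi_k$ contribution inside $\dot{Q}_{jk}$ is already symmetric; so the core claim to verify is
$$
A_{jk} := \int_{\partial\Omega}\sigma\bigl(\nabla_{\partial\Omega}\phi_k\cdot\nabla_{\partial\Omega}\dot{\phi}_j - \lambda_0\phi_k\dot{\phi}_j\bigr) = A_{kj}.
$$

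The first step would be to apply tangential integration by parts on $\partial\Omega$ (a closed hypersurface, so no boundary terms arise) to rewrite
$$
A_{jk} = -\int_{\partial\Omega}\dot{\phi}_j\Bigl[\mathrm{div}_{\partial\Omega}(\sigma\nabla_{\partial\Omega}\phi_k) + \lambda_0\sigma\phi_k\Bigr].
$$
At this point the boundary condition defining $\dot{\phi}_k$ in Corollary~\ref{ed}, namely $\frac{\partial\dot{\phi}_k}{\partial N} = \mathrm{div}_{\partial\Omega}(\sigma\nabla_{\partial\Omega}\phi_k) + \lambda_0\sigma\phi_k$, collapses the bracketed expression to a single normal derivative, giving
$$
A_{jk} = -\int_{\partial\Omega}\dot{\phi}_j\,\frac{\partial\dot{\phi}_k}{\partial N}.
$$

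To finish, apply Green's second identity on $\Omega$ to compare $A_{jk}$ and $A_{kj}$:
$$
A_{kj} - A_{jk} = \int_{\partial\Omega}\Bigl(\dot{\phi}_j\frac{\partial\dot{\phi}_k}{\partial N} - \dot{\phi}_k\frac{\partial\dot{\phi}_j}{\partial N}\Bigr) = \int_{\Omega}\bigl(\dot{\phi}_j\Delta\dot{\phi}_k - \dot{\phi}_k\Delta\dot{\phi}_j\bigr).
$$
Writing $\Delta\dot{\phi}_j = -\lambda_0\dot{\phi}_j + r_j$ with $r_j \in \mathrm{span}[\phi_1,\dots,\phi_m]$ (and similarly for $k$), and invoking the orthogonality $\dot{\phi}_j,\dot{\phi}_k \perp \mathrm{span}[\phi_1,\dots,\phi_m]$ provided by the defining problem for $\dot{\phi}_j$, the integral on the right cancels: the $\lambda_0$ terms cancel by symmetry of $\dot{\phi}_j\dot{\phi}_k$, and each $\int_\Omega \dot{\phi}_\ell r_\ell$-type term vanishes. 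Hence $A_{jk} = A_{kj}$ and the symmetry of $\stackrel{\circ\circ}{M}$ follows.

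The only point that requires any care is making sure that the tangential integration by parts has no boundary contribution (true because $\partial\Omega$ is closed) and that the orthogonality relations for $\dot{\phi}_j$ kill both cross terms when comparing $A_{jk}$ with $A_{kj}$; beyond that, the argument is a clean bookkeeping exercise exploiting the self-adjointness implicit in $-\Delta$ with Neumann data.
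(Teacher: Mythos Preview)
Your argument is correct. The paper itself does not supply a proof of this remark (it merely asserts the symmetry and later uses it in Theorem~\ref{tsalnme} via the identity $\int_{\partial\Omega}2\sigma\dot Q_{jk}=\int_{\partial\Omega}\sigma(\dot Q_{jk}+\dot Q_{kj})$), so there is nothing to compare against; your tangential integration by parts followed by Green's identity and the orthogonality $\dot\phi_j\perp\mathrm{span}[\phi_i]$ is exactly the natural verification.
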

   
%Não faremos uso do corolário acima no restante deste trabalho, mas entendemos que, de certa forma, ele completa o teorema anterior além de servir para referências futuras.
 
% Como abordaremos o problema de genericidade dos autovalores para o pro\-ble\-ma de Robin necessitaremos de curvas de autovalores e autofunções que sejam pelo menos diferenciáveis. Mas como já vimos anteriormente não é possível utilizar o mesmo argumento do teorema anterior, contudo se supormos que a multiplicidade não se altera quando consideramos uma perturbação especifica $h(t,.)$ é possível obter uma curva regular de autovalores (com multiplicidade constante) se as função $\beta$ e $c$ também forem suficientemente regulares. Mais precisamente temos o seguinte.
% \begin{teo}
% Considere o problema (\ref{plr}) com funções $c$ de classe $\mathcal{C}^2$ e $\beta$ de classe $\mathcal{C}^3$. Sejam $\lambda_0$ um autovalor para o problema (\ref{plr}) e uma família de difeomorfismos $h(t,.)$ de classe $\mathcal{C}^3$ com a seguinte propriedade: dado $\epsilon>0$, exite $\delta>0$ tal que para todo $|t|<\delta$ o problema $(\ref{plrp})_{h(t,.)}$ possua um único autovalor $\lambda(t)$ com multiplicidade $m$ e $|\lambda(t)-\lambda_0|<\epsilon$. 
% \end{teo} 

\section{Multiplicity of the eigenvalues on symmetric domains} \label{SMA}

In this section, we discuss some consequences of  the symmetry on the multiplicity of the eigenvalues of  problem (\ref{eln}). If $G$ is a compact subgroup of
 the orthogonal group $O(n)$,  we say that $\Omega $ is \textit{$G$-symmetric} (or, it is\textit{ $G$-invariant}, or it  has \textit{symmetry $G$}) if    $g\Omega=\Omega$ for all  $g\in G$.  Let 
 $$ 
 \mathrm{Diff}_G^m(\Omega) = \{ h \in  \mathrm{Diff}^m(\Omega)  | \; h \circ g = gh,
\textrm{ for any } g \in G  \}.  
$$  If $\Omega $ is $G$-symmetric and $h \in \mathrm{Diff}_G^m(\Omega)$  then clearly
 $h(\Omega)$ is also $G$-symmetric and we can then restrict the topology defined in section
\ref{secperturb} to the set of $G$-symmetric regions.

% O capítulo esta organizado da seguinte forma: seção \ref{pf} contem o calculo diferencial desenvolvido por Henry, a seção \ref{pa} apresenta os resultados da Teoria de Representações de Grupos por fim a seção \ref{cs} contém as principais consequências de se estudar os autovalores do problema de Laplace em regiões simétricas, além de alguns resultados técni\cos necessários ao longo dos capítulos posteriores.
\subsection{Algebraic preliminaries}\label{pa}

We now  present some definitions and results from the Representation Theory of Compact Groups 
 (see \cite{rh} chapter 3, section  27 for details and proofs) that will be used in the sequel.

Let  $G$ be a compact group.  A \textit{representation} of  $G$  in a Hilbert space  $H$ is a group  homomorphism  $V:G\rightarrow GL(H)$, where
 $GL(H)$  is the group 
    (under composition)  of invertible  continuous  linear operators in $H$. 
     If $H$ is a complex (resp. real) Hilbert space  the 
    representation $V$  
  is called \textit{unitary} (resp. \textit{orthogonal})  if   
   the image $V(g)$, which we denote in the sequel by $V_g$, is an  
  unitary (resp. orthogonal) operator, for any $g \in G$.   
 
\begin{defi}
A representation $G$ is strongly continuous if  $\displaystyle\lim_{x\rightarrow e}V_x\xi=\xi$ for any $\xi\in H$.
\end{defi}
\begin{defi}
Let  $V:G\rightarrow GL(H)$ and  $V':G\rightarrow GL(H')$ be continuous representations of $G$. We say that
\begin{enumerate}
\item  $V$  and $V'$  are equivalent if there exists a linear isometry  $T:H\rightarrow H'$ such that  $V_x'\circ T=T\circ V_x$, for any  $x\in G$.
\item $V$ is finite dimensional  if  $H$ is finite dimensional.  
\item A closed subspace  $H_1\subset H$ is invariant for  $V$ if
 $V_xH_1\subset H_1$ for any  $x\in G$. The representation  $V':G\rightarrow GL(H_{1})$ is called a sub-representation of  $V$ and will be denoted by $V_{|H_1}$.
\item V  is irreducible if its  only closed invariant subspaces are  $\{0\}$ 
 and $H$. Otherwise, $V$ is called reducible.
\item If  $H=H_1\oplus H_2\oplus ...\oplus H_m$, where the  $H_i$ are invariant
  under  $V$, we write  $V=V_{|H_1}\oplus V_{|H_2}\oplus ...\oplus V_{|H_m}$ and
 say that  $V$ is a direct sum of the representations  $V_{|H_i}$.
\end{enumerate}
\end{defi}

\begin{teo}
Any irreducible unitary (resp. orthogonal)  representation of a compact group 
 $G$ is finite dimensional. $G$ is abelian if and only if all its irreducible representations have  dimension (complex) $1$.
\end{teo}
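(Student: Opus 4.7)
The plan is to prove the two statements separately, using the existence of a bi-invariant Haar probability measure $dg$ on the compact group $G$ (guaranteed by compactness) together with the classical averaging trick and Schur-type arguments.

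For the finite-dimensionality claim, my strategy would be to manufacture a nonzero compact self-adjoint operator on $H$ that commutes with the representation $V$. Concretely, I would fix a unit vector $\xi_{0}\in H$, let $P$ denote the orthogonal projection onto the line $\mathbb{R}\xi_{0}$ (or $\mathbb{C}\xi_{0}$), and set
$$
A \;=\; \int_{G} V_{g}\, P\, V_{g}^{-1}\, dg,
$$
where the integral is understood in the strong operator sense; strong continuity of $V$ and compactness of $G$ make this meaningful. The operator $A$ is manifestly self-adjoint and positive, and the left-invariance of Haar measure gives $V_{h}AV_{h}^{-1}=A$ for all $h\in G$. The operator $A$ is also compact, since it is a strong limit (indeed a norm limit, after a standard approximation) of finite convex combinations of the rank-one projections $V_{g}PV_{g}^{-1}$, each of which is compact. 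Finally $A\neq 0$ because $\langle A\xi_{0},\xi_{0}\rangle=\int_{G}|\langle V_{g}^{-1}\xi_{0},\xi_{0}\rangle|^{2}\, dg >0$ by continuity of the integrand at $g=e$.

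Once $A$ is in hand, the conclusion follows from the spectral theorem for compact self-adjoint operators: $A$ has a nonzero eigenvalue $\mu$ with finite-dimensional eigenspace $H_{\mu}$. Because every $V_{h}$ commutes with $A$, each $V_{h}$ preserves $H_{\mu}$, so $H_{\mu}$ is a nonzero closed $V$-invariant subspace of $H$. Irreducibility forces $H_{\mu}=H$, and hence $\dim H<\infty$. This argument works uniformly in the unitary and orthogonal cases, since it relies only on self-adjointness of $A$ and invariance of its eigenspaces, not on the full strength of Schur's lemma over $\mathbb{C}$.

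For the second statement, if $G$ is abelian and $V:G\to GL(H)$ is an irreducible (complex) unitary representation, then each $V_{g}$ commutes with every $V_{h}$; by Schur's lemma $V_{g}=\lambda(g)I$ for some scalar $\lambda(g)$. Then every one-dimensional subspace is $V$-invariant, so irreducibility gives $\dim H=1$. Conversely, assuming every irreducible unitary representation of $G$ is one-dimensional, I would invoke the Peter--Weyl theorem to ensure that the family of irreducible representations separates points of $G$. For any $g,h\in G$ and any such one-dimensional $V$, the images $V_{g},V_{h}$ are commuting scalars, so $V_{gh}=V_{g}V_{h}=V_{h}V_{g}=V_{hg}$; separation of points then yields $gh=hg$, proving commutativity.

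The main technical obstacle is the construction and compactness of the averaged operator $A$: one must justify the strong-operator integral, verify that the limit of compact operators is compact in the appropriate topology, and rule out $A=0$. Everything else (the spectral decomposition, Schur's lemma in the complex case, and the Peter--Weyl separation property) is standard once the averaging step is established.
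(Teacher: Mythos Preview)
The paper does not supply its own proof of this theorem: it is stated in the algebraic preliminaries with an explicit reference to Hewitt--Ross, \emph{Abstract Harmonic Analysis II}, chapter~3, section~27, for ``details and proofs''. So there is no in-paper argument to compare against.

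Your proposal is a correct and standard route to both assertions. For the compactness of $A$, the cleanest way to close the step you flag is to observe that the map $g\mapsto V_{g}PV_{g}^{-1}$ is \emph{norm}-continuous (not merely strongly continuous): since $P$ is the rank-one projection onto $\xi_{0}$, one has $V_{g}PV_{g}^{-1}=\langle\,\cdot\,,V_{g}\xi_{0}\rangle V_{g}\xi_{0}$, and $g\mapsto V_{g}\xi_{0}$ is norm-continuous by strong continuity, which controls the operator norm of the difference of two such rank-one projections. Hence the Bochner integral converges in operator norm and $A$, as a norm limit of finite-rank operators, is compact. Everything else in your outline (nonvanishing of $A$, invariance of eigenspaces, Schur's lemma in the complex case, and Peter--Weyl separation for the converse) is exactly as you describe.
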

 Let  $V$ be a finite dimensional representation of 
  $G$.  The function  $\chi_{V}$ given by  $g\rightarrow trV_g$, where $tr$
 is the trace of the operator   $V_g$,
  is called  the 
  character of $V$.  Clearly, two equivalent representations have the 
 same character. 

   Let  $G$ be a  compact subgroup of the orthogonal group $O(n)$. 
 The set of all equivalent classes of continuous irreducible  representations 
 of $G$ is called the \textit{dual object} of $G$ and is  denoted by $\hat G$. 
 We denote by $ {\mathcal{X}}_{\sigma} $ the character of any representation in the class 
 $\sigma \in \hat G$ and by $ d_{\sigma}$ its dimension.  
   If $H$ is a Hilbert space  and $V:G \mapsto L(H)$ is a continuous 
 orthogonal representation of $G$, we can define, for each $\sigma \in \hat G$,  
 the operator $P_{\sigma} $ in $H$ by 
 \[ \langle P_{\sigma} \xi \:,\:\eta \rangle = 
\int_G \langle V_x \xi \:,\:\eta \rangle d_{\sigma} 
\mathcal{X}_{\sigma}(x)\,\, dx \] 
  $P_{\sigma}$ is a continuous projection (see \cite{ta}). We set $M_{\sigma}:= P_{\sigma}H$. 
 
   The following decomposition theorem will be important in the sequel. 
 A proof for \textit{unitary} representations can be found in \cite{rh}. For real 
 spaces it can be obtained from this result by complexification (see
 \cite{ta}).
\begin{teo}\label{td}
Let  $G$ be a compact subgroup of  $O(n)$ and  $V$ a continuous (unitary)  orthogonal  
 representation of $G$  in $H$.   
 For every  $\sigma \in$ $\hat G$,  let  $P_{\sigma}$ be the operator in  $H$   
 defined by 
\[ \langle P_{\sigma} \xi \:,\:\eta \rangle = 
\int_G \langle V_x \xi \:,\:\eta \rangle d_{\sigma} 
\mathcal{X}_{\sigma}(x)\, dx. \]     
 Then $P_{\sigma}$  is a projection operator in $H$.    
\newline  If   $\sigma \neq \sigma ' $ then  $M_{\sigma}$ and  $M_{\sigma'}$  
 are    orthogonal subspaces of  $H$,  
  $H$ =     $ \bigoplus_{ \sigma \in \hat{G}} M_{\sigma}$.  
  \newline     
  For each  $\sigma \in \hat{G} , M_{\sigma} $ is  either $\{ 0 \}$ or  a direct 
 sum of  $m_{\sigma} $  pairwise orthogonal, $d_{\sigma}$-invariant subspaces 
 $ L_{\sigma , j}$, on each of which    
    $ V_{\mid L_{\sigma,j}}  \in \sigma $. \newline     
The cardinal number  $m_{\sigma}$ may be finite or infinite. 
  \newline     
The subspace  $M_{\sigma}$ is the smallest closed subspace of  $H$ containing  
 all invariant subspaces of  $H$ on which  $V$  is in the class     $\sigma$. \newline     
This direct sum decomposition of $V$ is unique in the following sense. If     
\[   H \:=  \: \bigoplus_{\lambda \in \Lambda} N_{\lambda}, \]     
where each  $N_{\lambda}$ is an invariant subspace on which  $V$ is irreducible, 
then        
     
\[ \{\oplus N_{\lambda} \:| \: V_{\mid N_{\lambda}} \in \sigma \}\:= \: M_{\sigma}\]      
and there are  $m_{\sigma}$  subspaces $N_{\lambda}$ on each of which      
$V_{\mid N_{\lambda}} \in \sigma.$     
\end{teo}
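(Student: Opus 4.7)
The plan is to establish the theorem first for complex unitary representations, where the two main ingredients are the Schur orthogonality relations for characters and the Peter--Weyl density theorem; the orthogonal (real) case then follows by complexification together with the observation that characters of orthogonal representations are real valued.

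First I would verify that each $P_\sigma$ is a bounded self-adjoint projection. Boundedness is immediate from $|\mathcal{X}_\sigma(x)| \leq d_\sigma$ together with the finiteness of Haar measure on the compact group $G$. Self-adjointness uses $\mathcal{X}_\sigma(x^{-1}) = \overline{\mathcal{X}_\sigma(x)}$ combined with unimodularity of Haar measure, which makes the change of variable $x \mapsto x^{-1}$ an isometry. Idempotency $P_\sigma^2 = P_\sigma$ and the orthogonality $P_\sigma P_{\sigma'} = 0$ for $\sigma \neq \sigma'$ both fall out of the Schur convolution identity
\[
\int_G \mathcal{X}_\sigma(xy^{-1})\, \mathcal{X}_{\sigma'}(y)\, dy \;=\; \delta_{\sigma\sigma'}\, \tfrac{1}{d_\sigma}\, \mathcal{X}_\sigma(x),
\]
after applying Fubini to the double integral defining $P_\sigma P_{\sigma'}\xi$ and exploiting left invariance of $dx$. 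This immediately yields that distinct $M_\sigma$ are mutually orthogonal closed subspaces of $H$.

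The central step, and the main obstacle, is proving $H = \overline{\bigoplus_{\sigma\in\hat G} M_\sigma}$. My plan is to invoke the Peter--Weyl theorem: the matrix coefficients of irreducible representations of $G$ are dense in $C(G)$, and hence finite linear combinations of characters (together with translates) span a dense subalgebra. Using strong continuity of $V$, one transfers this to $H$: given $\xi \in H$ with $\xi \perp M_\sigma$ for every $\sigma$, the vector-valued function $g\mapsto V_g\xi$ is continuous, and testing against characters shows $\langle V_g\xi,\xi\rangle$ is orthogonal in $L^2(G)$ to every matrix coefficient, forcing $\xi=0$. Peter--Weyl itself relies on the spectral theorem for the compact, self-adjoint convolution operator $f\mapsto k\ast f$ on $L^2(G)$ with a suitable symmetric approximate identity $k$, which is the technical heart of the argument.

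To decompose each nonzero $M_\sigma$ into its irreducible pieces, pick $0\neq\xi\in M_\sigma$ and consider the cyclic subspace $H_\xi = \overline{\mathrm{span}}\{V_g\xi : g\in G\}$; a direct character computation shows $H_\xi\subset M_\sigma$ is finite dimensional (bounded by $d_\sigma^2$) and splits as a finite orthogonal sum of $V$-invariant subspaces on which $V$ lies in the class $\sigma$. A Zorn's lemma argument on the family of pairwise orthogonal $\sigma$-isotypic subspaces then yields the decomposition $M_\sigma=\bigoplus_j L_{\sigma,j}$ with the stated properties, and $m_\sigma$ is well defined by counting. The characterization of $M_\sigma$ as the smallest closed subspace containing every invariant subspace of class $\sigma$, and the uniqueness statement, follow from the observation that if $V|_N\in\sigma$ then $P_\sigma|_N = \mathrm{id}_N$ while $P_{\sigma'}|_N=0$ for $\sigma'\neq\sigma$, which is again a character computation. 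Finally, for orthogonal representations on a real Hilbert space I would complexify by $H_{\mathbb C}=H\oplus iH$, extend $V_g$ to a unitary operator, apply the complex result, and then note that since the character of an orthogonal representation is real valued the defining integral for $P_\sigma$ maps $H$ into $H$, so the decomposition descends to the real setting.
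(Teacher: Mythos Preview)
The paper does not actually prove this theorem: it states immediately before Theorem~\ref{td} that a proof for unitary representations can be found in Hewitt--Ross~\cite{rh}, and that the real (orthogonal) case is obtained from this by complexification, citing~\cite{ta}. So there is no ``paper's own proof'' to compare against beyond that two-line reduction.

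Your sketch is the standard argument one finds in such references: Schur orthogonality for the projection properties, Peter--Weyl for completeness, a Zorn-type exhaustion for the internal decomposition of each $M_\sigma$, and complexification at the end. This last step is exactly the reduction the paper invokes. One small caution: in your final sentence you justify that $P_\sigma$ preserves the real subspace $H$ by saying ``the character of an orthogonal representation is real valued,'' but the relevant character in the integrand is $\mathcal{X}_\sigma$, the character of the irreducible class $\sigma\in\hat G$, not the character of $V$. For a complex irreducible $\sigma$ this need not be real; what one actually uses is that $\overline{\mathcal{X}_\sigma}=\mathcal{X}_{\bar\sigma}$, so either $\sigma$ is self-conjugate and $\mathcal{X}_\sigma$ is real, or one pairs $\sigma$ with $\bar\sigma$ and the combined projector $P_\sigma+P_{\bar\sigma}$ is real. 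This is the sort of bookkeeping the paper suppresses by pointing to~\cite{ta}; apart from that, your outline is sound and aligned with the cited approach.
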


 \subsection{Consequences of  the symmetry}\label{cs}
%\subsection{Decomposição de $L^2(\Omega)$ e Espaços invariantes}
 We now apply the abstract results of the previous section to derive some results on the multiplicity of the eigenvalues of  (\ref{eln}).  The main result
  was obtained   in  \cite{as} and \cite{ta}, for the Dirichlet Laplacian. The proof in the Neumann case is completely similar but is presented here for
 completeness.

 % . Supondo que $G$ possua um ponto livre, A.L.Pereira mostrou que em regiões $\Omega$ $G$-simétricas sempre existem autovalores múltiplos para o Laplaciano, exceto para subgrupos $G$ isomorfos a $\mathbb{Z}_2\oplus...\oplus\mathbb{Z}_2$ ($m$ vezes).
    
 If  $G$ is a compact subgroup of  $O(n)$, the ``natural'' action of  $G$ in  $\mathbb{R}^n$ is given by  $(g,x)\mapsto gx$. The subgroup 
 $G_x=\{g\in G: gx=x\}$ is called the \textit{isotropy group} of  $x\in\mathbb{R}^n$  and
  $G(x)=\{gx: g\in G\}$ is the \textit{orbit of  $x$} under this action. A point 
  $x\in\mathbb{R}^n$ such that $G_x=Id$ is called a free point for the action.

Let  $\Omega\subset\mathbb{R}^n$ be   open, bounded,    $G$-invariant  and
 $\Gamma:G\rightarrow GL(L^2(\Omega))$ the quasi-regular representation of  $G$
  
$$
\Gamma_gu=u\circ g^{-1},\,\,\forall\, g\in G, \,\,\forall\, u\in L^2(\Omega).
$$
 This representation is orthogonal and commutes with the Laplacian, that is
$$
(\Gamma_g\circ\Delta)u=\Gamma_g(\Delta u)=(\Delta u)\circ g^{-1}=\Delta(u\circ g^{-1})=(\Delta\circ\Gamma_g)u
$$
for any  $u\in H^2(\Omega)$, and  $g\in G$. As an immediate consequence the eigenspaces are invariant under the representation  $\Gamma$.

For any  $\sigma\in\hat{G}$, let $P_{\sigma}$ be the projection
$$
\left\langle P_{\sigma}f, h\right\rangle=\int_{G}\left\langle \Gamma_g f, h \right\rangle d_{\sigma}\chi_{\sigma}dg.
$$
Theorem  \ref{td} asserts  that  
$$
L^2(\Omega)=\displaystyle\bigoplus_{\sigma\in\hat{G}}M_{\sigma},
$$
where  $M_{\sigma}=P_{\sigma}L^2(\Omega)$.

 The  spaces $M_{\sigma}$ are invariant 
 for the Laplacian. More precisely
\begin{prop}\label{pi}
 If $
{\cal{D}}_N=\{u\in H^2(\Omega) | \frac{\partial u}{\partial N}=0,\,\,\textrm{on} \, \, \partial\Omega\}.
$   
 then the Laplacian is a linear transformation from  $M_{\sigma}\cap{\cal{D}}$ to  $M_{\sigma}$.
\end{prop}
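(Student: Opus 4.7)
The plan is to show that both the Laplacian and the projection $P_\sigma$ commute (as operators acting on suitable domains), which will immediately give the invariance of $M_\sigma \cap \mathcal{D}_N$ under $\Delta$. Commutativity of $\Gamma_g$ and $\Delta$ has already been observed on $H^2(\Omega)$; the task reduces to transferring that commutativity through the integral defining $P_\sigma$, and checking that the Neumann boundary condition is respected by the group action.

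The first step is to verify that $\Gamma_g$ preserves $\mathcal{D}_N$. Given $u \in \mathcal{D}_N$ and $x \in \partial\Omega$, since $\Omega$ is $G$-symmetric we have $g^{-1}x \in \partial\Omega$, and because $g \in O(n)$, the outward unit normal transforms as $N(x) = g\,N(g^{-1}x)$. Hence
\begin{equation*}
\frac{\partial (u \circ g^{-1})}{\partial N}(x) = \nabla u(g^{-1}x) \cdot g^{-1} N(x) = \nabla u(g^{-1}x) \cdot N(g^{-1}x) = 0,
\end{equation*}
so $\Gamma_g u \in \mathcal{D}_N$. Combined with the already-noted identity $\Gamma_g \Delta u = \Delta \Gamma_g u$, this shows $\Gamma_g$ and $\Delta$ commute as operators $\mathcal{D}_N \to L^2(\Omega)$.

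Next I would promote this pointwise (in $g$) commutation to commutation with $P_\sigma$. For any $u \in \mathcal{D}_N$ and any $h \in L^2(\Omega)$, using the defining formula of $P_\sigma$ and the fact that $\Delta$ is a closed operator from $\mathcal{D}_N$ to $L^2(\Omega)$ (so it commutes with the weakly convergent Bochner integral over the compact group $G$), we obtain
\begin{equation*}
\langle P_\sigma (\Delta u), h\rangle = \int_G \langle \Gamma_g \Delta u, h\rangle\, d_\sigma \overline{\mathcal{X}_\sigma(g)}\, dg = \int_G \langle \Delta \Gamma_g u, h\rangle\, d_\sigma \overline{\mathcal{X}_\sigma(g)}\, dg = \langle \Delta P_\sigma u, h\rangle,
\end{equation*}
after noting that $P_\sigma u \in \mathcal{D}_N$ too (the previous step guarantees that $\mathcal{D}_N$ is invariant under each $\Gamma_g$, hence under the averaged operator).

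Finally, if $u \in M_\sigma \cap \mathcal{D}_N$ then $P_\sigma u = u$, so $\Delta u = \Delta P_\sigma u = P_\sigma \Delta u \in M_\sigma$, which is the claim. The main technical point is the first step, namely the invariance of the Neumann boundary condition under $\Gamma_g$; once this is in place the rest is a routine consequence of the integral representation of $P_\sigma$ and the orthogonality of $G$.
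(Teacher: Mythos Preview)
Your proof is correct. The paper actually states this proposition without proof, treating it as an immediate consequence of the commutation $\Gamma_g \Delta = \Delta \Gamma_g$ on $H^2(\Omega)$ noted just before; your argument fills in exactly the details left implicit, in particular the verification that the Neumann boundary condition is preserved by $\Gamma_g$ (since $g \in O(n)$ carries outward normals to outward normals) and the passage from commutation with each $\Gamma_g$ to commutation with the averaged projection $P_\sigma$.
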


Furthermore, we have

%\end{proof}
%  Using this  result, it can be proved  that the dimension of the eigenspaces
% of the Laplacian restricted to  $M_{\sigma}$, is a multiple of   $d_{\sigma}$. This follows immediately from the following   

\begin{prop}\label{dM}
 Each symmetry space $M_{\sigma}$ can be decomposed as a direct sum of subspaces
  $M_{\sigma}^i$ satisfying
\begin{enumerate}
\item $M_{\sigma}^i$  is invariant under the representation  $\Gamma$  and  $\Gamma_{M_{\sigma}^i}$ is an irreducible representation in the class  $\sigma$.
\item $M_{\sigma}^i$ is invariant for the Laplacian and   $\Delta|_{M_{\sigma}^i}$ is a multiple of the 
 identity, that is, the elements of  $M_{\sigma}^i$  are eigenfunctions associated to
 the same eigenvalue  $\lambda$. 
\end{enumerate}
\end{prop}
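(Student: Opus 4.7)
My plan is to combine the spectral decomposition of the Neumann Laplacian with the isotypic decomposition given by Theorem \ref{td}, intersecting them to produce the desired subspaces.

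First, I would note that the Neumann Laplacian on the bounded regular region $\Omega$ has compact resolvent, hence $L^{2}(\Omega)$ decomposes as the orthogonal Hilbert sum of its (finite-dimensional) eigenspaces $E_{\lambda}=Ker(\Delta+\lambda)$. By Proposition \ref{pi}, $M_{\sigma}$ is $\Delta$-invariant, so intersecting with $M_{\sigma}$ yields the orthogonal Hilbert decomposition
\[
M_{\sigma}=\bigoplus_{\lambda}\bigl(E_{\lambda}\cap M_{\sigma}\bigr).
\]
Each summand $E_{\lambda}\cap M_{\sigma}$ is finite-dimensional and, because both $E_{\lambda}$ (since $\Gamma$ commutes with $\Delta$) and $M_{\sigma}$ are $\Gamma$-invariant, it carries a finite-dimensional orthogonal sub-representation of $\Gamma$.

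Next, I would apply Theorem \ref{td} to this finite-dimensional representation $\Gamma|_{E_{\lambda}\cap M_{\sigma}}$. This produces a finite direct sum decomposition
\[
E_{\lambda}\cap M_{\sigma}=\bigoplus_{i=1}^{k_{\lambda,\sigma}} L_{\lambda,\sigma}^{i},
\]
where each $L_{\lambda,\sigma}^{i}$ is $\Gamma$-invariant and $\Gamma|_{L_{\lambda,\sigma}^{i}}$ is irreducible. Since $L_{\lambda,\sigma}^{i}\subset M_{\sigma}$ and $M_{\sigma}$ is, by definition, the closed linear span of all irreducible invariant subspaces of class $\sigma$, the uniqueness clause in Theorem \ref{td} forces $\Gamma|_{L_{\lambda,\sigma}^{i}}\in\sigma$. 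This gives property (1). Property (2) is immediate from $L_{\lambda,\sigma}^{i}\subset E_{\lambda}$: the subspace is $\Delta$-invariant and $\Delta$ acts on it as multiplication by $-\lambda$.

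Finally, relabelling the double-indexed family $\{L_{\lambda,\sigma}^{i}\}_{\lambda,i}$ as $\{M_{\sigma}^{i}\}$, I obtain the claimed decomposition $M_{\sigma}=\bigoplus_{i}M_{\sigma}^{i}$. The only subtle point is really the compatibility of the two decompositions, namely that one can simultaneously diagonalise by eigenvalues and refine into $\Gamma$-irreducibles; this is guaranteed here because $\Delta$ and $\Gamma$ commute, so they act compatibly on each factor, and because each $E_{\lambda}$ is finite-dimensional, so Theorem \ref{td} applies without any convergence issue. I do not foresee any genuine obstacle beyond carefully invoking both decomposition theorems in the right order.
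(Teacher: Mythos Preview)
Your proof is correct and follows essentially the same approach as the paper: first decompose $M_{\sigma}$ via the spectral theorem for the Neumann Laplacian, then split each eigenspace into irreducibles using Theorem~\ref{td}. If anything, you are slightly more explicit than the paper in justifying why the resulting irreducible pieces lie in the class $\sigma$ (invoking the uniqueness clause of Theorem~\ref{td}), whereas the paper simply asserts this.
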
  
\begin{proof}
Consider the spectral decomposition of the Laplacian restricted to  $M_{\sigma}$, that is, $M_{\sigma}=\oplus V_{\lambda}$, where  $V_{\lambda}$ is the eigenspace associated to the eigenvalue  $\lambda$. Since the Laplacian commutes with  $\Gamma$, the eigenspaces  $V_{\lambda}$ are invariant for the representation. From  Theorem \ref{td}  we have the decomposition
 $V_{\lambda}=V^1_{\lambda}\oplus...\oplus V^k_{\lambda}$, where each  $V^j_{\lambda}$ is an irreducible space in the class  $\sigma$.
 This proves the result.  

\end{proof}
\begin{coro}\label{amr}
 The multiplicity of each eigenvalue of the Laplacian restricted to 
 $M_{\sigma}$ is a multiple of the irreducible representations in the class $\sigma$.
\end{coro}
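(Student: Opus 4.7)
The corollary is essentially an immediate consequence of Proposition \ref{dM}, so my plan is to simply unpack what that proposition gives us for a fixed eigenvalue.

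First, I would fix an eigenvalue $\lambda$ of the Laplacian acting on $M_{\sigma}$ (with the Neumann boundary condition) and consider its eigenspace $V_{\lambda} = \mathrm{Ker}(\Delta + \lambda) \cap M_{\sigma}$. Since $M_{\sigma}$ is invariant under $\Delta$ (Proposition \ref{pi}) and under $\Gamma$ (by construction), and since $\Delta$ and $\Gamma$ commute on $\mathcal{D}_N$, the space $V_{\lambda}$ is invariant under $\Gamma$. Being an eigenspace of an elliptic operator on a bounded region with smooth boundary conditions, $V_{\lambda}$ is finite dimensional.

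Next, applying Theorem \ref{td} to the finite dimensional orthogonal representation $\Gamma|_{V_\lambda}$, we obtain a decomposition of $V_{\lambda}$ into pairwise orthogonal $\Gamma$-invariant subspaces on each of which the restriction of $\Gamma$ is irreducible. Because $V_{\lambda} \subset M_{\sigma}$, and $M_\sigma$ by Theorem \ref{td} contains only irreducible sub-representations in the class $\sigma$, each of these irreducible pieces must belong to the class $\sigma$. (Alternatively, one can invoke the decomposition in Proposition \ref{dM} directly: each $M_\sigma^i$ on which $\Delta$ acts as $\lambda \cdot I$ is an irreducible subspace of $V_\lambda$ in the class $\sigma$, and collecting these gives the desired decomposition of $V_\lambda$.)

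Finally, every irreducible representation in the class $\sigma$ has dimension exactly $d_{\sigma}$. Writing
\[
V_{\lambda} = W_1 \oplus W_2 \oplus \cdots \oplus W_k,
\]
with $\Gamma|_{W_j} \in \sigma$ for each $j$, we conclude
\[
\dim V_{\lambda} = \sum_{j=1}^{k} \dim W_j = k \, d_{\sigma},
\]
which is the claim. There is no real obstacle here; the whole work has been done in setting up Theorem \ref{td} and Proposition \ref{dM}, and the corollary is simply a dimension count reading off those results.
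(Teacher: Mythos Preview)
Your proposal is correct and follows essentially the same approach as the paper: both argue that the corollary is an immediate consequence of Proposition~\ref{dM}, with your version merely spelling out the dimension count $\dim V_\lambda = k\,d_\sigma$ that is implicit in the paper's one-line proof.
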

\begin{proof}
It follows immediately from proposition \ref{dM}. 

\end{proof}

Up to now, nothing precludes the possibility of  the spaces  $M_{\sigma}$ being trivial.
 For this, we need an additional technical condition.
  % Mesmo que $G$ contenha representações irredutíveis com dimensão $d_{\sigma}>1$ não podemos concluir que existam autovalores $\lambda$ para o Laplaciano com multiplicidade maior que 1. Entretanto, considerando uma hipótese técnica adicional ao grupo $G$, pode-se garantir que a representação quase regular $\Gamma$ contenha todas as classes de representações irredutíveis de $G$ e portanto garantindo que nenhum dos espaços $M_{\sigma}$ é vazio.  

\begin{teo}\label{tam}
If  $G$ is a compact subgroup of  $O(n)$ and there exists a free point  $x\in\Omega$ 
 under the natural action then, for each  $\sigma\in\hat{G}$ there is an eigenvalue
 $\lambda$ of the Laplacian, and a subspace  $H$  of the associated eigenspace
  $V_{\lambda}$ such that  $\Gamma_{|_H}$ is in the class  $\sigma$. In particular, for any  $\sigma\in\hat{G}$, there exist  an infinite number of eigenvalues
 whose multiplicity is a multiple of the dimension $d_{\sigma}$.
\end{teo}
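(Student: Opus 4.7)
The plan is to reduce the statement to showing that $M_\sigma \ne \{0\}$ for every $\sigma \in \hat{G}$, and then exhibit a non-zero element of $M_\sigma$ using the free point hypothesis together with the Peter--Weyl theorem on $L^2(G)$.

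\textbf{Step 1 (Reduction).} By Proposition \ref{pi}, $\Delta$ maps $M_\sigma \cap \mathcal{D}_N$ into $M_\sigma$. Standard spectral theory for the Neumann Laplacian on the $\mathcal{C}^2$-regular region $\Omega$ gives a Hilbert basis of $L^2(\Omega)$ consisting of eigenfunctions, and since $P_\sigma$ commutes with $\Delta$ (because $\Gamma_g$ does), we obtain $M_\sigma = \bigoplus_\lambda (V_\lambda \cap M_\sigma)$. By Proposition \ref{dM} each non-zero $V_\lambda \cap M_\sigma$ contains an invariant subspace on which $\Gamma$ is in the class $\sigma$. Hence, the first assertion will follow as soon as we prove $M_\sigma \neq \{0\}$, and the second assertion (infinitely many such eigenvalues) will follow once we know that $M_\sigma$ is infinite dimensional, since each $V_\lambda$ is finite dimensional and each $V_\lambda \cap M_\sigma$ has dimension a multiple of $d_\sigma$ by Corollary \ref{amr}.

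\textbf{Step 2 (Straightening the orbit of the free point).} Let $x_0\in\Omega$ be a free point, so that $g\mapsto g\cdot x_0$ is injective on $G$. Since $G$ is compact and the action is continuous, a standard compactness argument produces an open ball $U\subset\Omega$ centered at $x_0$ with the property that the map
\[
\Phi: G\times U\longrightarrow \Omega,\qquad \Phi(g,y)=g\cdot y,
\]
is a $\mathcal{C}^\infty$ embedding onto an open $G$-invariant subset $V\subset\Omega$ (shrinking $U$ if necessary so that $gU\cap U=\emptyset$ for $g$ outside a neighborhood of the identity, and using freeness to handle the identity neighborhood).

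\textbf{Step 3 (Equivariant injection of $L^2(G)$).} Fix a non-zero bump function $\psi\in\mathcal{C}^\infty_0(U)$ and, for each $\phi\in L^2(G)$, define $T\phi\in L^2(\Omega)$ by
\[
(T\phi)(x)=\phi(g)\,\psi(y)\quad\text{if } x=\Phi(g,y)\in V,\qquad (T\phi)(x)=0\text{ otherwise.}
\]
A direct computation shows
\[
(\Gamma_{g_0} T\phi)(\Phi(g,y)) = (T\phi)(\Phi(g_0^{-1}g,y)) = \phi(g_0^{-1}g)\psi(y) = (T(\lambda_{g_0}\phi))(\Phi(g,y)),
\]
where $\lambda$ is the left regular representation of $G$ on $L^2(G)$. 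Thus $T$ is a non-zero $G$-equivariant linear map $L^2(G)\to L^2(\Omega)$, and so $T(M_\sigma^{\,G})\subset M_\sigma^{\,\Omega}$ for every $\sigma\in\hat G$.

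\textbf{Step 4 (Conclusion via Peter--Weyl).} The Peter--Weyl theorem guarantees that for each $\sigma\in\hat G$ the isotypic component $M_\sigma^{\,G}$ of the left regular representation has dimension $d_\sigma^2>0$. Since $T$ is injective (it is essentially pullback by a diffeomorphism times a non-zero factor), $T(M_\sigma^{\,G})$ is a non-zero subspace of $M_\sigma^{\,\Omega}$, proving $M_\sigma\neq\{0\}$. By Step 1, there exists an eigenvalue $\lambda$ and an irreducible subspace of $V_\lambda$ on which $\Gamma$ is in the class $\sigma$. Finally, varying the bump function $\psi$ (say, using an orthonormal family in $L^2(U)$), one obtains infinitely many linearly independent vectors in $M_\sigma$; since each eigenspace $V_\lambda$ is finite dimensional, $M_\sigma$ must intersect non-trivially infinitely many distinct eigenspaces, each having multiplicity a multiple of $d_\sigma$ by Corollary \ref{amr}.

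The one place requiring genuine care is Step 2: verifying that, at a free point, one really gets an open $G$-invariant tube on which the orbit map is a diffeomorphism. Once this local normal-form statement is in hand, the remainder is essentially the Peter--Weyl decomposition transported into $L^2(\Omega)$.
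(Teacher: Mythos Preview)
Your proposal follows the same reduction as the paper: both argue that the theorem follows from Proposition \ref{dM} and Corollary \ref{amr} once one knows that every isotypic space $M_\sigma$ is infinite dimensional. The paper simply cites \cite{ta}, Theorem 3.2, for this fact, whereas you supply a self-contained argument via an equivariant injection of $L^2(G)$ into $L^2(\Omega)$ and the Peter--Weyl theorem. That is exactly the natural proof of the cited result, and your Steps 1, 3 and 4 are correct.

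There is one genuine error in Step 2 when $G$ has positive dimension. You take $U$ to be an open ball in $\Omega$ centered at $x_0$ and assert that $\Phi : G \times U \to \Omega$, $(g,y)\mapsto gy$, is a $\mathcal{C}^\infty$ embedding. This is impossible on dimensional grounds: $\dim(G\times U)=\dim G + n > n$. The correct object is a \emph{slice} $S$ through $x_0$, namely a small disk of dimension $n-\dim G$ transverse to the orbit $G\cdot x_0$; the slice theorem for free actions of compact Lie groups then gives the tubular embedding $\Phi: G\times S \hookrightarrow \Omega$ you need. With $U$ replaced by $S$ the rest of your argument goes through unchanged, since $L^2(V)\cong L^2(G)\otimes L^2(S)$ as $G$-representations (with $G$ acting trivially on the second factor), making the infinite dimensionality of $M_\sigma$ immediate. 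For finite $G$ your formulation is already correct as written, which is the case the remainder of the paper actually uses.
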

\begin{proof}
 The result follows immediately from Corollary \ref{amr}, once it is known that
 the spaces  $M_{\sigma}$ are all infinite dimensional. This is proved in 
 \cite{ta} (Theorem. 3.2).

\end{proof}

 As an immediate consequence, we also obtain the following result.
\begin{coro}\label{cam}
If  $G$  is not a direct sum of cyclic groups of order $2$,  $\Omega$ is
 $G$-symmetric and  contains a free point under the action of $G$, then there always exist multiple eigenvalues of the Neumann Laplacian in $\Omega$.
\end{coro}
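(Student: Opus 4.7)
The plan is to reduce the statement to a purely representation-theoretic fact about $G$, and then invoke Theorem \ref{tam}. Specifically, under the hypothesis excluding $G \cong \mathbb{Z}_2 \oplus \cdots \oplus \mathbb{Z}_2$, I would claim that $\hat{G}$ must contain some class $\sigma$ with $d_\sigma \geq 2$. Once this is established, Theorem \ref{tam} produces (infinitely many) eigenvalues of the Neumann Laplacian on $\Omega$ whose multiplicity is a positive multiple of $d_\sigma \geq 2$, which settles the corollary.

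To produce such a $\sigma$, I would argue by cases. If $G$ is non-abelian, a standard fact from the representation theory of compact groups (a compact group all of whose irreducibles are one-dimensional is necessarily abelian) immediately gives an irreducible representation of dimension at least $2$. If instead $G$ is abelian, then each of its complex irreducibles has complex dimension one, but a real irreducible representation of $G$ can be either one-dimensional (corresponding to a character with values in $\{\pm 1\}$) or two-dimensional (the realification of a pair of nontrivial complex-conjugate characters). Thus the only way for every real irreducible of $G$ to be one-dimensional is for every element of $G$ to have order dividing $2$; combined with $G \subset O(n)$, this forces $G \cong (\mathbb{Z}_2)^k$ for some $k \leq n$, contradicting the hypothesis.

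The most delicate point, and the only real obstacle, is the bookkeeping around real versus complex representations: since $L^2(\Omega)$ is treated as a real Hilbert space, the decomposition in Theorem \ref{td} and the index set $\hat{G}$ appearing in Theorem \ref{tam} must be interpreted in terms of real irreducible representations, with $d_\sigma$ the real dimension. With that interpretation fixed, the corollary follows at once from Theorem \ref{tam} and the case analysis above; no further analysis of the Neumann problem is required.
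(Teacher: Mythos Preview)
Your proposal is correct and follows the same route as the paper: the paper presents this corollary as an immediate consequence of Theorem~\ref{tam} with no further argument, and you have simply made explicit the representation-theoretic step (that a compact $G$ not isomorphic to $\mathbb{Z}_2^k$ must possess a real irreducible of dimension at least $2$) which the paper leaves implicit. Your attention to the real-versus-complex bookkeeping is exactly what makes the abelian case go through.
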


\section{Generic $G$-simplicity of the eigenvalues}\label{GSGA}
In this section, we analyze the validity of Conjecture \ref{conject1} for the Neumann Laplacian in the case of  \textit{finite groups}.
  We establish the validity of part I of the conjecture for arbitrary finite
subgroups $G$ of $O(n)$. Part II of the conjecture will be proved under an  additional assumption  on the dimension of the irreducible representations of $G$.
    %Em virtude de dificuldades técnicas, os casos $G$ finito e infinito serão tratados separadamente. De fato, ao  considerarmos $G$ infinito será necessário fazer uma hipótese adicional sobre o grupo, a saber, $dimG<n-1$ e $\Omega$ deve conter um ponto livre pela ação de $G$. Assim estabeleceremos a etapa I da conjectura para subgrupos infinitos compactos $G$ de $O(n)$. Além disso, também veremos que a conjectura é válida para subgrupos finitos que não possuam representações irredutíveis de dimensão maior que $2$. 

 An important step in our proof will be the analysis of the behavior of the eigeinvalues in each symmetric space. Here,  in contrast to the Dirichlet case analyzed in \cite{ta}, the knowledge of the first derivative of the eigenvalues did not suffice to separate multiple eigenvalues and  it became necessary to compute also the
 second derivative.  
%Outro aspecto que também será analisado na presente seção é o comportamento dos autovalores $G$-simples com relação a pequenas perturbações do domínio que preservam a simetria. Como podemos notar para os autovalores do problema de Laplace com condição de Dirichlet a análise da primeira derivada da curva de autovalores $G$-simples é suficiente para verificarmos que os autovalores $G$-simples variam com relação a perturbações simétricas do domínio. A situação é um pouco mais delicada para o problema com condição de Neumann, principalmente se o subgrupo $G$ for infinito. 

%%Na expressão da segunda derivada de autovalores aparece uma espécie de operador pseudodiferencial de fronteira que, de fato, fornecerá  mais informações sobre as autofunções na fronteira de $\Omega$. Neste ponto utilizaremos o método das soluções rapidamente oscilantes desenvolvido por Henry em \cite{henry} .
%%

%O estudo do comportamento das derivadas dos autovalores na presença de simetria é central em nossa abordagem. De forma geral  argumentaremos por contradição na demonstração dos teoremas. 

\subsection{A special case}

In this section we consider the very special case   where the symmetry group $G$ is isomorphic to    $ \mathbb{Z}_2\oplus\mathbb{Z}_2\oplus...\oplus\mathbb{Z}_2$ ($m$ times ).

 We first prove a technical result due to Uhlenbeck.

\begin{lema}\label{boundexp} 
Suppose  $\Omega\subset\mathbb{R}^n$ is  an open,  bounded,   $\mathcal{C}^2$-regular domain 
  $\lambda$ is a positive real number and   $f,g$ are  $\mathcal{C}^2$ functions on 
 $\partial\Omega$. If 
$$
 \nabla_{\partial\Omega} f \cdot \nabla_{\partial\Omega} g-\lambda fg=0,\, on\,\, \partial\Omega.
$$
 Then, at least one of those functions vanishes on $\partial\Omega$.
\end{lema}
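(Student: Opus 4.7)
The plan is to show, in three steps, that $\phi := fg$ vanishes identically on $\partial\Omega$ and then that this forces one of $f,g$ to vanish. The starting point is an algebraic identity. Expanding $\nabla_{\partial\Omega}\phi = g\,\nabla_{\partial\Omega}f + f\,\nabla_{\partial\Omega}g$ and using the hypothesis to simplify the cross term yields
\[
|\nabla_{\partial\Omega}\phi|^{2} \;=\; g^{2}|\nabla_{\partial\Omega}f|^{2}+f^{2}|\nabla_{\partial\Omega}g|^{2} + 2\lambda\,\phi^{2} \;\ge\; 2\lambda\,\phi^{2},
\]
so $|\nabla_{\partial\Omega}\phi|\ge \sqrt{2\lambda}\,|\phi|$ on $\partial\Omega$, with strict positivity of the left-hand side wherever $\phi\ne 0$.

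My next step is to exploit compactness of $\partial\Omega$ to deduce $\phi\equiv 0$. Suppose not; by symmetry I may assume $\phi(x_{0})>0$ for some $x_{0}\in\partial\Omega$. On the open set $\{\phi>0\}$ the vector field $V := \nabla_{\partial\Omega}\phi/|\nabla_{\partial\Omega}\phi|$ is well defined and of class $\mathcal{C}^{1}$, since $|\nabla_{\partial\Omega}\phi|\ge\sqrt{2\lambda}\,\phi>0$ there. Let $\gamma(t)$ denote the integral curve of $V$ with $\gamma(0)=x_{0}$. Along $\gamma$,
\[
\frac{d}{dt}\phi(\gamma(t)) \;=\; |\nabla_{\partial\Omega}\phi|(\gamma(t)) \;\ge\; \sqrt{2\lambda}\,\phi(\gamma(t)),
\]
so Gronwall gives $\phi(\gamma(t))\ge \phi(x_{0})e^{\sqrt{2\lambda}\,t}$. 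Since $\phi$ is strictly increasing along $\gamma$, the curve remains in the compact sublevel set $\{\phi\ge\phi(x_{0})\}\subset\{\phi>0\}$ where $V$ is $\mathcal{C}^{1}$, so $\gamma$ is defined for all $t\ge 0$. The exponential growth then contradicts the boundedness of $\phi$ on the compact manifold $\partial\Omega$. The case $\phi(x_{0})<0$ is symmetric. Hence $\phi\equiv 0$, i.e.\ $fg\equiv 0$ on $\partial\Omega$.

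Finally, I promote this pointwise identity to the claimed global conclusion. Assume $f\not\equiv 0$; then $U := \{f\ne 0\}$ is a nonempty open subset of $\partial\Omega$, and $fg\equiv 0$ forces $g\equiv 0$ on $U$. It suffices to show that $U$ is dense in $\partial\Omega$, because continuity of $g$ then yields $g\equiv 0$ on $\overline{U}=\partial\Omega$. The density of $U$ is the subtle step: in the setting in which the lemma will be applied, $f$ is the boundary value of a Neumann eigenfunction $\tilde f$ of $\Delta+\lambda_{0}$, and if $\{f=0\}$ had nonempty interior $W\subset\partial\Omega$, then $\tilde f = 0$ and $\partial\tilde f/\partial N = 0$ on $W$; Theorem \ref{tuc} would then force $\tilde f\equiv 0$ in $\Omega$, contradicting $f\not\equiv 0$.

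The main obstacle is precisely this last density step: the gradient inequality and the flow argument giving $fg\equiv 0$ are self-contained and use only $\mathcal{C}^{2}$ regularity plus compactness, but passing from $fg\equiv 0$ to the global conclusion requires the unique continuation input provided by the Neumann eigenfunction context, since $\mathcal{C}^{2}$ regularity alone would admit, for example, disjointly supported bump functions on a circle as obstructions.
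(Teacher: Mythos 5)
Your argument is correct through the conclusion $fg\equiv 0$ on $\partial\Omega$, and it reaches that conclusion by a genuinely different route from the paper's. The paper fixes $x_0$ with $f(x_0)\neq 0\neq g(x_0)$, flows along $\nabla_{\partial\Omega}f$ so that $f$ is monotone along the flow while $u(t)=g(x(t))$ solves the linear ODE $\dot u=\lambda f(x(t))u$, hence $g(x(t))=g(x_0)\exp\bigl(\lambda\int_0^t f(x(s))\,ds\bigr)$, and derives unboundedness of $g$ along the forward or backward flow according to the sign of $f(x_0)$. You instead work with the single scalar $\phi=fg$, use the identity $|\nabla_{\partial\Omega}\phi|^2=g^2|\nabla_{\partial\Omega}f|^2+f^2|\nabla_{\partial\Omega}g|^2+2\lambda\phi^2\geq 2\lambda\phi^2$, and run the normalized gradient flow of $\phi$ on $\{\phi>0\}$ (resp. $\{\phi<0\}$). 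This packages the hypothesis into a differential inequality for one function, is symmetric in $f$ and $g$, and avoids the sign case analysis; the paper's version has the small side benefit of showing in addition that $\nabla_{\partial\Omega}f=0$ on $\{g\neq 0\}$. Both are ``exponential growth versus compactness of $\partial\Omega$'' arguments.

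Your caveat about the final step is also accurate, and it is worth noting that the paper's own proof has exactly the same shape: it too stops at the pointwise statement that at each boundary point $f$ or $g$ vanishes, and your disjointly supported bumps show that the stronger reading ``one of $f,g$ is identically zero on $\partial\Omega$'' is false under the stated hypotheses alone. The lemma is only ever applied (in Theorem \ref{tsalnz2}) to boundary traces of Neumann eigenfunctions, where the upgrade works as you indicate: $\{g\neq 0\}\cap\partial\Omega$ is open and nonempty (by Theorem \ref{tuc}, since $g$ already satisfies $\partial g/\partial N=0$), on it $f=0$ and $\partial f/\partial N=0$, and Theorem \ref{tuc} then forces $f\equiv 0$ in $\Omega$. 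So the conclusion should be read pointwise, and with that reading your proof is complete; no repair is needed beyond what you have already written.
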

 
\begin{proof} 
Let   $x(t)$ be a solution of the the equation   $\nabla_{\partial\Omega}f(x(t))=\stackrel{.}{x}(t),
 \ x(0)=x_0 \in \partial \Omega$. Since  $\partial\Omega$ is compact 
 $x(t)$ is defined for  $t$ and   $\frac{d}{dt}f(x(t))=|\nabla f(x(t))|^2\geq0$.
 Now, the function  $g(x(t))$ satisfies the equation  $\dot{u}(t)=\lambda f(x(t))u(t),
 \ u(0)= g(x_0)$ and, thus $g(x(t))=g(x_0)exp(\lambda\int_{0}^tf(x(s))ds)$.  Therefore, if
 $f(x_0) \neq  0$ and  $g(x_0)\neq0$, then  $g(x(t))$ would be unbounded   which cannot occur since   $\partial\Omega$
is compact 
\end{proof}

 \begin{teo}\label{tsalnz2} Suppose  $G$ be a subgroup of  $O(n)$ which is  isomorphic to  $\mathbb{Z}_2\oplus\mathbb{Z}_2\oplus...\oplus\mathbb{Z}_2$ and  $\Omega\subset\mathbb{R}^n$  an open,  bounded,   $\mathcal{C}^3$-regular,  $G$-symmetric domain. If  $\lambda_0$ is an eigenvalue of (\ref{eln})
with multiplicity  $m>1$ then, given $\epsilon>0$ there exist  $\delta>0$  and
 $h\in  Diff^3_{G}(\Omega) $, $||h-i_{\Omega}||_{\mathcal{C}^3}<\epsilon$ such that the
eigenvalues of  (\ref{elnp}) in the interval   $(\lambda_0-\delta,\lambda_0+\delta)$
 are all simple. 
\end{teo}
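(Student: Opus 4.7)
The plan is to reduce simplicity of the perturbed eigenvalues to the distinctness of the $m$ analytic Taylor expansions obtained by feeding a $G$-equivariant velocity $V$ of class $\mathcal{C}^3$ into Theorem \ref{tecapln} applied to $h(t,x)=x+tV(x)$, and then using Corollary \ref{ed} to control the first and second derivatives of the eigenvalue curves $\lambda_l(t)$. Because $G\cong(\mathbb{Z}_2)^m$ is abelian with real, $\pm 1$-valued characters, by Theorem \ref{td} and Proposition \ref{dM} I can choose an orthonormal basis $\phi_1,\dots,\phi_m$ of $\ker(\Delta+\lambda_0)$ that simultaneously diagonalizes every $\Gamma_g$, and partition $\{1,\dots,m\}$ into blocks $B_1,\dots,B_r$ according to the character $\chi_j$ carried by $\phi_j$.

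The first decisive observation is that both $\stackrel{\circ}{M}$ and $\stackrel{\circ\circ}{M}$ are block-diagonal relative to this partition. The normal velocity $\sigma=V\cdot N$ on $\partial\Omega$ is $G$-invariant since $G\subset O(n)$ and $V\circ g=gV$, while for $j\in B_a$, $k\in B_b$ with $a\neq b$ the integrand $Q_{jk}=\nabla_{\partial\Omega}\phi_j\cdot\nabla_{\partial\Omega}\phi_k-\lambda_0\phi_j\phi_k$ transforms under $G$ by the nontrivial character $\chi_j\chi_k$, so the $G$-invariance of surface measure forces $\stackrel{\circ}{M}_{jk}=\int_{\partial\Omega}\sigma Q_{jk}=0$. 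The same reasoning applies to $\stackrel{\circ\circ}{M}$, once one verifies that the auxiliary function $\dot\phi_j$ defined in Corollary \ref{ed} lies in the same symmetry space $M_{\sigma_j}$ as $\phi_j$: applying $\Gamma_g$ to the boundary-value problem characterizing $\dot\phi_j$ and using uniqueness yields $\Gamma_g\dot\phi_j=\chi_j(g)\dot\phi_j$.

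The second step is within-block separation: for each block $B_a$ I claim one can choose $V$ so that $\stackrel{\circ}{M}(V)|_{B_a}$ has simple spectrum. Concentrating $\sigma$ near a free orbit $Gx_0\subset\partial\Omega$ gives, to leading order, a scalar multiple of the pointwise symmetric matrix $(Q_{jk}(x_0))_{j,k\in B_a}$; the obstruction to simple spectrum is the vanishing of its discriminant. If this discriminant vanished identically on $\partial\Omega$, one could, after rotating the basis inside $B_a$, produce an identity of the form $\nabla_{\partial\Omega}\psi\cdot\nabla_{\partial\Omega}\psi'-\lambda_0\psi\psi'\equiv 0$ on $\partial\Omega$ for eigenfunctions $\psi,\psi'$ of $\Delta$; Lemma \ref{boundexp} then forces one of them to vanish on $\partial\Omega$, and combined with the Neumann condition $\partial\psi/\partial N=0$, Theorem \ref{tuc} forces $\psi\equiv 0$ in $\Omega$, a contradiction. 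Taking linear combinations of $\sigma$'s concentrated near different orbits then realizes a $V$ for which every block has simple spectrum.

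The final step handles cross-block coincidences, which may occur precisely because $\stackrel{\circ}{M}$ is block-diagonal: a value of $\dot\lambda=\mu$ may arise from distinct blocks $B_a$ and $B_b$, producing two curves with the same first derivative. For these, Corollary \ref{ed} gives $\ddot\lambda=-\langle c_a,\stackrel{\circ\circ}{M}c_a\rangle$ and $\ddot\lambda=-\langle c_b,\stackrel{\circ\circ}{M}c_b\rangle$, where $c_a,c_b$ are unit $\mu$-eigenvectors in the two blocks; the block-diagonality of $\stackrel{\circ\circ}{M}$ makes these two quadratic forms depend on disjoint data and, by a small further $G$-equivariant perturbation of $V$ acting differently on orbits relevant to $B_a$ and $B_b$, they can be made unequal. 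After such adjustments, all Taylor expansions of the $\lambda_l(t)$ are pairwise distinct, so the curves separate for every small $t\neq 0$; taking $t$ small gives $h\in \mathrm{Diff}^3_G(\Omega)$ with $\|h-i_\Omega\|_{\mathcal{C}^3}<\epsilon$. The hardest part is the within-block step for blocks of size larger than two, where the absence of simple spectrum must be translated into a global identity on $\partial\Omega$ before Lemma \ref{boundexp} and Theorem \ref{tuc} can be invoked — this is precisely why the second derivative of the eigenvalues had to be computed.
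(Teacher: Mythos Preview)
Your overall strategy diverges from the paper's, and the divergence introduces a real gap. The paper does \emph{not} use the second derivative $\stackrel{\circ\circ}{M}$ for this theorem at all; both the within-block and the cross-block cases are settled using only $\stackrel{\circ}{M}$.

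For the within-block case your idea is basically right but overcomplicated. When $\phi_j,\phi_k$ lie in the same character space $M_\chi$, the off-diagonal integrand $Q_{jk}=\nabla_{\partial\Omega}\phi_j\cdot\nabla_{\partial\Omega}\phi_k-\lambda_0\phi_j\phi_k$ is itself $G$-invariant, so the requirement $\int_{\partial\Omega}\sigma\,Q_{jk}=0$ for every $G$-invariant $\sigma$ forces $Q_{jk}\equiv 0$ on $\partial\Omega$ directly. No discriminant argument or concentration near free orbits is needed; Lemma~\ref{boundexp} and Theorem~\ref{tuc} then apply immediately.

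Your cross-block step is where the genuine gap lies. You propose to separate two curves from distinct blocks $B_a,B_b$ that share the same first derivative by adjusting $V$ so that $\langle c_a,\stackrel{\circ\circ}{M}c_a\rangle\neq\langle c_b,\stackrel{\circ\circ}{M}c_b\rangle$. But you never establish that such an adjustment exists. The phrase ``perturbing $V$ differently on orbits relevant to $B_a$ and $B_b$'' has no content here: every $G$-invariant $\sigma$ acts on both blocks simultaneously, and the dependence of $\stackrel{\circ\circ}{M}$ on $\sigma$ --- through the auxiliary solutions $\dot\phi_j$ of a boundary value problem --- is nonlocal. Carrying this through would essentially require the rapidly-oscillating-function analysis of Section~\ref{OF}, which is far more than the theorem needs.

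The paper's device for the cross-block case is a one-line trick that stays at first order. If $\phi_i\in M_{\chi_i}$ and $\phi_j\in M_{\chi_j}$ with $\chi_i\neq\chi_j$, then the \emph{diagonal} condition
\[
\int_{\partial\Omega}\sigma\Bigl(\bigl(|\nabla\phi_i|^2-\lambda_0\phi_i^2\bigr)-\bigl(|\nabla\phi_j|^2-\lambda_0\phi_j^2\bigr)\Bigr)=0
\]
for all $G$-invariant $\sigma$ forces the (already $G$-invariant) integrand to vanish pointwise on $\partial\Omega$. Rewriting this as $\nabla\psi^+\cdot\nabla\psi^--\lambda_0\psi^+\psi^-=0$ with $\psi^\pm=\phi_i\pm\phi_j$ feeds Lemma~\ref{boundexp} a second time and yields the contradiction. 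Your closing remark that ``this is precisely why the second derivative of the eigenvalues had to be computed'' is therefore misplaced: second derivatives are needed only in the general finite-group case (Theorem~\ref{tsalnme}), not here.
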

\begin{proof}
 Suppose  $\lambda_0$ is an eigenvalue of  (\ref{elnp}) with multiplicity
   $m>1$. It is enough to show that it  can be separated by small perturbations preserving the symmetry.  
 If $ h\in  Diff^3_{G}(\Omega) $, 
the perturbed problem in the Lagrangean form is
\begin{equation}\label{elnp}
                 \left\{
                       \begin{array}{lccc}
                             h^*(\Delta+ \lambda)h^{*-1}u=0,& in \ \ \Omega;\\
                              h^*\frac{\partial }{\partial N}h^{*-1}u=0, & \ \ on\ \ \partial \Omega;\\
                              
                       \end{array}
                            \right.
\end{equation} 

 If we choose an analytic family of diffeomorphism $t\rightarrow h(t,.)\in \mathcal{C}^3$, 
Theorem \ref{tecapln} guarantees the existence of   $m$ corresponding analytic curves of eigenvalues with derivatives given by the eigenvalues of the matrix
 (see corollary \ref{ed}) 
$$
\stackrel{\circ}{M}_{ij}=\int_{\partial\Omega}\sigma(\nabla\phi_i \cdot \nabla\phi_j-\lambda_0\phi_i\phi_j).
$$

 Suppose, by contradiction, that $\lambda_0$, cannot be split into eigenvalues of smaller multiplicity.  Then $ \stackrel{\circ}{M} $ must be   a multiple of the identity, that is
\begin{eqnarray}
& & \int_{\partial\Omega}\sigma(|\nabla\phi_i|^2-\lambda_0\phi_i^2-(|\nabla\phi_j|^2-\lambda_0\phi_j^2))=0 \label{dp}\\ 
& & \int_{\partial\Omega}\sigma(\nabla\phi_i \cdot \nabla\phi_j-\lambda_0\phi_i\phi_j)=0,\, i\neq j.\label{nd}
\end{eqnarray}
Since the family of diffeomorphism can be arbitrarily chosen in $ Diff^3_{G}(\Omega)$,
 the function  $\sigma$ can be any     $G$-invariant function on $\partial \Omega$.

 Let  $L^2(\Omega)=\bigoplus_{\chi\in\hat{G}} M_{\chi}$
 be the decomposition given by   Theorem \ref{td}. In the present case,
   $M_{\chi}=\{f\in L^2(\Omega): f\circ g=\chi(g)f,\, \forall g\in G\}$ 
 and  $\chi(g) \in\{-1,1\}$ for all  $g\in G$. Furthermore, we can choose an orthonormal basis of  the eigenspace  $V_{\lambda_0}$
 $\{\phi_j\}_{j=1}^{m}$, with $\phi_j\in M_{\chi_j}$ (the spaces $M_{\chi_j}$  need not be all
 distinct). We need to analyze two  situations

 case  i)  there exist more than one eigenfunction in 
 the same symmetry space  $M_{\chi}$. Thus, the expression  $\nabla\phi_i \cdot \nabla\phi_j-\lambda_0\phi_i\phi_j$ is a $G$-invariant function on  $\partial\Omega$. From    (\ref{nd})
 it follows that   $\nabla\phi_i \cdot \nabla\phi_j-\lambda_0\phi_i\phi_j=0$ on $\partial\Omega$, which cannot occur by Lemma \ref{boundexp}.

 case ii) happens,  there exist two eigenfunctions  $\phi_i,\phi_j$ belonging to 
 distinct symmetry spaces.  Since the functions   $|\nabla\phi_i|^2-\lambda_0\phi_i^2$ are  $G$-invariant for each $i$, it follows from  (\ref{dp}), that 
$$
\nabla(\phi_i+\phi_j)\cdot\nabla(\phi_i-\phi_j)-\lambda_0(\phi_i+\phi_j)(\phi_i-\phi_j)=|\nabla\phi_i|^2-\lambda_0\phi_i^2-(|\nabla\phi_j|^2-\lambda_0\phi_j^2)=0
$$
on  $\partial\Omega$. Writing  $\psi^+=\phi_i+\phi_j$ e $\psi^-=\phi_i-\phi_j$, we have $\nabla\psi^+ \cdot \nabla\psi^--\lambda_0\psi^+\psi^-=0$, where $\psi^+$ e $\psi^-$ 
 are eigenfunctions associated to  $\lambda_0$, again in contradiction with Lemma
 \ref{boundexp}.
\end{proof}

\begin{coro}
If  $G$ is a finite subgroup of $O(n)$ isomorphic to  $\mathbb{Z}_2\oplus\mathbb{Z}_2\oplus...\oplus\mathbb{Z}_2$ then, for a residual set of open, bounded,  connected regions
 $\mathcal{C}^2$-regular $G$-symmetric  regions    $\Omega$ of  $\mathbb{R}^n$, the eigenvalues of  (\ref{eln}) are all simple. 
\end{coro}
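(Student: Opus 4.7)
The plan is to deduce the corollary from Theorem \ref{tsalnz2} by a Baire category argument inside Micheletti's space of regions. Fix a $\mathcal{C}^2$-regular, $G$-symmetric reference region $\Omega_0$ and work in the complete metric space
\[
\mathcal{M}_2^G(\Omega_0)=\{h(\Omega_0):h\in \mathrm{Diff}_G^2(\Omega_0)\},
\]
which is a closed, hence Baire, subspace of $\mathcal{M}_2(\Omega_0)$. For each integer $N\geq 1$ I would introduce
\[
\mathcal{O}_N=\bigl\{\Omega\in\mathcal{M}_2^G(\Omega_0):\text{every Neumann eigenvalue of $\Omega$ in $[0,N]$ is simple}\bigr\}.
\]
Since $\bigcap_{N}\mathcal{O}_N$ is precisely the set of $G$-symmetric regions with every Neumann eigenvalue simple, it suffices to prove that each $\mathcal{O}_N$ is open and dense in $\mathcal{M}_2^G(\Omega_0)$; doing so for each diffeomorphism class yields the residuality statement of the corollary.

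Openness is immediate from the continuity Theorem \ref{tcaln}. For $\Omega\in\mathcal{O}_N$, let $\lambda_1<\cdots<\lambda_k$ be its distinct eigenvalues in $[0,N]$ (each simple) and choose pairwise disjoint open intervals $I_j$ around the $\lambda_j$ meeting no other spectrum of $\Omega$ and contained in $[0,N+1]$. Theorem \ref{tcaln} produces a uniform $\delta>0$ such that every $h\in \mathrm{Diff}_G^2(\Omega_0)$ with $\|h-i_\Omega\|_{\mathcal{C}^2}<\delta$ yields a perturbed region whose spectrum has exactly one eigenvalue, counted with multiplicity, in each $I_j$ and none in $[0,N]\setminus\bigcup_j I_j$. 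Hence a $\mathcal{C}^2$-neighborhood of $\Omega$ is contained in $\mathcal{O}_N$.

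Density is the substantive step, handled by a finite iteration of Theorem \ref{tsalnz2}. Given $\Omega\in\mathcal{M}_2^G(\Omega_0)$ and a prescribed neighborhood $\mathcal{U}$, I would first replace $\Omega$ by a $\mathcal{C}^3$-regular $G$-symmetric region $\Omega'\in\mathcal{U}$, obtained by a $G$-equivariant mollification of a defining function for $\Omega$. Since only finitely many eigenvalues lie in $[0,N+1]$, I would then enumerate the multiple ones and remove them one at a time: at each step Theorem \ref{tsalnz2} supplies an $h\in \mathrm{Diff}_G^3(\Omega')$, arbitrarily small in $\mathcal{C}^3$ (hence in $\mathcal{C}^2$), that splits the selected multiple eigenvalue into simple ones. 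Applying Theorem \ref{tcaln} to the eigenvalues already known to be simple guarantees that a sufficiently small perturbation neither recombines them nor pushes any of them out of its isolating interval. After finitely many such steps the resulting region lies in $\mathcal{O}_N\cap\mathcal{U}$.

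The main obstacle I anticipate is the quantitative bookkeeping: after each splitting step the spectrum has more distinct eigenvalues, some of which are brought into existence by the perturbation itself and may lie very close together, so the admissible perturbation size must be shrunk at each iteration to preserve simplicity of all prior eigenvalues while splitting a new one. This is settled by the uniform continuity packaged in Theorem \ref{tcaln}, so the difficulty is really an ordering of approximations rather than fresh analysis. The only genuinely non-trivial input beyond the Baire framework is Theorem \ref{tsalnz2} itself, which provides the crucial one-step multiplicity reduction in the $\mathbb{Z}_2\oplus\cdots\oplus\mathbb{Z}_2$ setting.
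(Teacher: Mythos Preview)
Your argument is correct and follows essentially the same route as the paper: define the sets $\mathcal{C}_k$ (your $\mathcal{O}_N$) of regions whose eigenvalues below level $k$ are all simple, use Theorem~\ref{tcaln} for openness and Theorem~\ref{tsalnz2} for density, then intersect over $k$. You have in fact written out the density step (the finite iteration that splits one multiple eigenvalue at a time while preserving the simplicity of those already split) and the $\mathcal{C}^2$/$\mathcal{C}^3$ approximation more carefully than the paper, which compresses this into a single sentence.
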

\begin{proof}
Let 
\begin{eqnarray}
& & {\cal{C}}_k=\{h\in Diff^3_{G}(\Omega): {\mathrm{the\, eigenvalues , \lambda\, of \,\, \,\, (\ref{eln})\,,}}\nonumber\\
& & \, \lambda<k,\,{\mathrm{ are\, all \,\, simple}} \}.\nonumber
\end{eqnarray}
 ${\cal{C}}_k$ is open by the continuity properties asserted by Theorem \ref{tcaln}. 
Theorem \ref{tsalnz2} guarantees that ${\cal{C}}_k$  is also dense. The result then follows by 
 taking intersection in $k$.

\end{proof}

\subsection{General finite groups} \label{secfiniteg}

 We now consider the problem for a general finite group $G$. As we will see,
 the first part of conjecture  \ref{conject1} (sub-conjecture I) can be established in this general case (though the arguments are  more involved than the Dirichlet case).  However the second part is much more difficult and we have only
been  able to establish it in some special cases. In fact,  even
 in the first step and supposing the eigenvalues do not split, 
 the expression of the first derivative of the eigenvalues,
 given by the matrix  $\stackrel{\circ}{M}$ does not suffice to obtain a contradiction.  Therefore we are forced to compute the second derivative. Then, the hypothesis of non separability implies that a certain boundary operator is of finite range. At this point, we use the ``Method of Rapidly Oscillating Solutions''
 (see section \ref{OF}) to obtain more information on the eigenfunctions, which finally lead to the searched for contradiction.  
\begin{teo}\label{tsalnme}
 Let  $G$ be a finite subgroup of  $O(n)$ and  $\Omega\subset\mathbb{R}^n$ and open bounded, connected,  $\mathcal{C}^3$-regular  $G$-invariant domain. Let also  $\lambda_0$ be an eigenvalue with multiplicity   $md_{\sigma}$, $m>1$, which is the unique eigenvalue  for the problem  (\ref{eln}) restricted to  $M_{\sigma}$ in the 
 interval  $(\lambda_0-\delta,\lambda_0+\delta)$ . Given  $\epsilon>0$ there exists   $h\in Diff^3_{G}(\Omega)$, $||h-i_{\Omega}||_{\mathcal{C}^3}<\epsilon$ such that the problem   (\ref{elnp}) restricted to
$ M_{\sigma}$ has exactly $m$ 
 \textit{$G_{\sigma}$-simple} eigenvalues in the interval  $(\lambda_0-\delta,\lambda_0+\delta)$.
\end{teo}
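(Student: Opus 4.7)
The plan is a proof by contradiction in the spirit of Theorem \ref{tsalnz2}, but adapted to handle irreducible representations $\sigma$ of arbitrary dimension $d_\sigma$. First, I would fix an orthonormal basis $\{\phi_{i,j}\}_{1 \le i \le m,\, 1 \le j \le d_\sigma}$ of $V_{\lambda_0}\cap M_\sigma$ adapted to the decomposition of Proposition \ref{dM} into $m$ irreducible $\sigma$-subspaces $M_\sigma^i$. Then I would choose an analytic curve of $G$-equivariant diffeomorphisms $h(t,\cdot) = i_\Omega + tV + O(t^2)$ with $V$ tangent to $\mathrm{Diff}^3_G(\Omega)$, and apply Theorem \ref{tecapln} to obtain $md_\sigma$ analytic curves of eigenvalues, whose initial derivatives are, by Corollary \ref{ed}, the eigenvalues of the symmetric matrix $\stackrel{\circ}{M}$.

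Because $V$ is $G$-equivariant, $\stackrel{\circ}{M}$ commutes with the $G$-action on $V_{\lambda_0}\cap M_\sigma$. Schur's lemma, applied to the decomposition into $m$ isomorphic irreducible $\sigma$-blocks, forces $\stackrel{\circ}{M}$ to have the block form $A(V) \otimes \mathrm{Id}_{d_\sigma}$ (up to the commutant of $\sigma$ when $\sigma$ is not of real type), where $A(V)$ is an $m\times m$ symmetric matrix depending linearly on the normal component $\sigma = V\cdot N|_{\partial\Omega}$. Thus every eigenvalue of $\stackrel{\circ}{M}$ has multiplicity a multiple of $d_\sigma$, and the conclusion of the theorem is equivalent to producing one admissible $V$ for which $A(V)$ has $m$ distinct eigenvalues.

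Assuming, toward a contradiction, that every admissible $V$ yields a matrix $A(V)$ with a repeated eigenvalue, a symmetry-averaging computation writes
\[
A_{ij}(V) = \int_{\partial\Omega} \sigma\, Q_{ij}\, dS, \qquad Q_{ij} = \frac{1}{d_\sigma}\sum_{k=1}^{d_\sigma}\bigl(\nabla_{\partial\Omega}\phi_{i,k}\cdot\nabla_{\partial\Omega}\phi_{j,k} - \lambda_0\,\phi_{i,k}\phi_{j,k}\bigr),
\]
where each $Q_{ij}$ is $G$-invariant on $\partial\Omega$. Letting $\sigma$ range over all $G$-invariant traces and using density, non-splittability forces pointwise identities among the $Q_{ij}$. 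As the introduction warns, these first-derivative relations are not enough to close the argument: unlike the $\mathbb{Z}_2^m$ situation of Theorem \ref{tsalnz2}, Lemma \ref{boundexp} cannot be invoked directly, because the $Q_{ij}$ are $G$-invariant symmetrizations of inner products rather than bare products of eigenfunctions.

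The main obstacle, and where I expect most of the technical work to sit, is the second-derivative step. I would use the formula for $\stackrel{\circ\circ}{M}$ from Corollary \ref{ed} together with Remark \ref{oed} to manufacture an analogous $m\times m$ symmetric matrix $B(V)$ whose non-generic behavior is forced on the repeated-eigenvalue subspace of $A(V)$. The resulting integral identities translate the non-splittability hypothesis into the statement that a certain boundary operator built from $Q_{ij}$ and its derivatives has finite-dimensional range. Following the hint in the introduction, I would then invoke the Method of Rapidly Oscillating Solutions from section \ref{OF} to localize these identities and recover pointwise boundary information hidden from the first-derivative level. The endgame is to produce a nontrivial eigenfunction with vanishing Cauchy data on an open patch of $\partial\Omega$, contradicting Theorem \ref{tuc}.
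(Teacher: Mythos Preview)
Your roadmap matches the paper's proof essentially step for step: contradiction, block-diagonal structure of $\stackrel{\circ}{M}$ via the $G$-action (the paper does this by direct computation rather than invoking Schur, but the content is the same), extraction of the $G$-invariant boundary densities $Q_{ij}$, the pass to $\stackrel{\circ\circ}{M}$, the finite-rank boundary operator, and the rapidly-oscillating-solutions input from section~\ref{OF}.

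One point you should sharpen in the endgame. The output of Theorem~\ref{tofln1} is only the \emph{summed} identity
\[
\sum_{j=1}^{d_\sigma}\frac{\partial\phi_{1,j}}{\partial\tau}\,\frac{\partial\phi_{2,j}}{\partial\tau}=0\quad\text{on }\partial\Omega,
\]
not vanishing of any single factor. The paper closes the gap by observing that the whole argument goes through with $(\phi_{1,j})_j$ replaced by $(\phi_{1,j}\circ g)_j=A_\sigma(g)(\phi_{1,j})_j$ for every $g\in G$; interpreting the sum as $\langle A_\sigma(g)v_1,v_2\rangle=0$ with $v_i=(\partial_\tau\phi_{i,j})_j\in\mathbb{R}^{d_\sigma}$ and using irreducibility of $A_\sigma$ forces $v_2=0$, hence $\nabla_{\partial\Omega}\phi_{2,j}=0$. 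The same trick applied once more to the first-order relation $\sum_j\phi_{1,j}\phi_{2,j}=0$ then gives $\phi_{2,j}=0$ on $\partial\Omega$, and only at that point does Theorem~\ref{tuc} apply. Your sketch jumps straight from ``finite rank'' to ``vanishing Cauchy data'' without this irreducibility step; make sure you build it in.
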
   
\begin{proof}  Let  $\{\phi_j^i\}$, $i=1,...,m; j=1,...,d_{\sigma}$ be
 an orthonormal basis for the eigenspace associated to  $\lambda_0$
 satisfying 
	\begin{eqnarray}\label{invariancian}
                 \left(\begin{array}{c}
                           \phi_1^{i}\\
                            .\\
                            .\\
                         
                            \phi^i_{d_{\sigma}}\\   
             
                   \end{array} \right)
             \circ g = A_{\sigma}(g)
               \left(\begin{array}{c}
                           \phi_1^{i}\\
                            .\\
                            .\\
                          
                            \phi^i_{d_{\sigma}}\\   
                     \end{array}\right),              
           \end{eqnarray}      
for all  $g\in G$ where  $g\mapsto A_{\sigma}(g)$  is an irreducible
  matrix representation of dimension $d_{\sigma}$  in the class  $\sigma$.
 Consider the  renumbering of the  functions  $\phi^i_j$ given by,
  $\varphi_k=\phi^i_j$, where  $k=(i-1)d_{\sigma}+j$ , that is 
$$
\varphi_1=\phi^1_1,...,\varphi_{d_{\sigma}}=\phi^1_{d_{\sigma}},\varphi_{d_{\sigma}+1}=\phi^2_1,...,\varphi_{2d_{\sigma}}=\phi^2_{d_{\sigma}},...,\varphi_{md_{\sigma}}=\phi^m_{d_{\sigma}}. 
$$
 Suppose that the multiplicity of  $\lambda_0$ cannot be reduced by small G-symmetric perturbations of $\Omega$. Then, the matrix   
 $\stackrel{\circ}{M}$, given by 
$$
\stackrel{\circ}{M}_{lk}=\int_{\partial\Omega}\sigma(\nabla\varphi_l\cdot\nabla\varphi_k-\lambda_0\varphi_l\varphi_k)
$$
 is such that  $\stackrel{\circ}{M}=\stackrel{.}{\lambda} I$, that is 
\begin{eqnarray}
& & \int_{\partial\Omega}\sigma(|\nabla\varphi_k|^2-\lambda_0\varphi_k^2)=\int_{\partial\Omega}\sigma(|\nabla\varphi_l|^2-\lambda_0\varphi_l^2)) \label{dlnf}\\ 
& & \int_{\partial\Omega}\sigma(\nabla\varphi_k \cdot \nabla\varphi_l-\lambda_0\varphi_k\varphi_l)=0,\, 1\leq k, l\leq md_{\sigma}.\label{ndlnf}
\end{eqnarray}
It is difficult to obtain some information from this relations, since the integrands are not $G$-invariant. However, taking into account the renumbering above, we see that  the entries of the matrix 
 $\stackrel{\circ}{M}$  contain the  expressions   
$$
\nabla\phi_j^i \cdot \nabla\phi_{j}^l-\lambda_0\phi_j^i\phi_{j}^l
$$ for  $1\leq i,l\leq m$. We can obtain some new information, if we show that their sum  
$$
\sum_{j=1}^{d_{\sigma}}\nabla\phi_j^i \cdot \nabla\phi_{j}^l-\lambda_0\phi_j^i\phi_{j}^l
$$
%------
$$
\sum_{j=1}^{d_{\sigma}}|\nabla\phi_j^i|^2-\lambda_0(\phi_j^i)^2
$$
are  $G$-invariant functions  on  $\partial\Omega$. To this aim, we show that the sum involving the gradient is  $G$-invariant, since the other sum is clearly
 $G$-invariant. In fact,

$$
\phi_j^i\circ g^{-1}(x)=\sum_{k=1}^{d_{\sigma}}a_{j,k}(g)\phi_k^i,
$$
where $a_{jk}(g)$  are the entries in the matrix representation  $g\rightarrow A_{\sigma}(g)$.
It follows that 
\begin{eqnarray*}
\sum_{j=1}^{d_{\sigma}}(\nabla\phi_{j}^i\cdot\nabla\phi_j^l)(g^{-1}x)&=& \sum_{j,k,p}^{d_{\sigma}}a_{jk}(g)a_{jp}(g)\nabla\phi_k^i\cdot\nabla\phi_p^l(x)\\
& = &\sum_{k,p}^{d_{\sigma}}\delta_{kp}\nabla\phi_k^i\cdot\nabla\phi_p^l(x)\\
& =&\sum_{j=1}^{d_{\sigma}}\nabla\phi_{j}^i\cdot\nabla\phi_j^l(x).
\end{eqnarray*}
 The proof that  $\sum_{j=1}^{d_{\sigma}}|\nabla\phi_j^i|^2-\lambda_0(\phi_j^i)^2$   is  $G$-invariant in  $\partial\Omega$ is analogous.

 Therefore,  observing that the function   $\sigma$ can be chosen arbitrarily close to any $G$-invariant function on $\partial \Omega$, relations  (\ref{ndlnf}) e (\ref{dlnf}) give 
\begin{equation*}%\label{cpd2}
\sum_{j=1}^{d_{\sigma}}|\nabla\phi_j^i|^2-\lambda_0(\phi_j^i)^2=\sum_{j=1}^{d_{\sigma}}|\nabla\phi_j^l|^2-\lambda_0(\phi_j^l)^2
\end{equation*}

\begin{equation}\label{cpd}
\sum_{j=1}^{d_{\sigma}}\nabla\phi_j^i \cdot \nabla\phi_{j}^l-\lambda_0\phi_j^i\phi_{j}^l=0, \quad
 \textrm{on} \quad  \partial\Omega.
\end{equation}
 Even with this new information about the eigenfunctions in the boundary, we could not obtain a contradiction.
  We thus calculated the second derivative of the curve of eigenvalues, using corollary \ref{ed}\footnote{One can obtain the expression of the matrix of the second derivative without appealing to corollary  \ref{ed} since, supposing the non separability of the eigenvalues it is legitimate to take derivatives directly from the expression for the first derivative.} 

\begin{eqnarray*}
\stackrel{\circ\circ}{M}_{k,j}& = &\int_{\partial\Omega}2\sigma \dot{Q}_{jk}+\sigma^2\frac{\partial}{\partial N}Q_{jk}+\left[\frac{\partial\sigma}{\partial t}+\sigma\frac{\partial\sigma}{\partial N} + H\sigma^2\right]Q_{jk}
\end{eqnarray*}
where 
$$
\begin{array}{l}
Q_{jk}=\nabla_{\partial\Omega}\varphi_j\cdot\nabla_{\partial\Omega}\varphi_k-\lambda_0\varphi_j\varphi_k\\
\dot{Q}_{jk}=\nabla_{\partial\Omega}\varphi_k\cdot\nabla_{\partial\Omega}\dot{\varphi_j}-\dot{\lambda}\varphi_k\varphi_j-\lambda\varphi_k\dot{\varphi_j},
\end{array}
$$

 and  $\dot{\varphi}_j$  is the unique solution of 

\begin{equation}\label{eofcln}
\left\{
\begin{array}{lc}
(\Delta+\lambda_0)\dot{\varphi}_j\ \in span[\varphi_i]_1^{m d_{\sigma}}, \\
\frac{\partial\dot{\varphi}_j}{\partial N}=\nabla_{\partial\Omega}\sigma\cdot\nabla_{\partial\Omega}\varphi_j-\sigma\frac{\partial^2}{\partial N^2}\varphi_j,&  \ \ on\ \ \partial \Omega\\
\dot{\varphi}_j\bot span[\varphi_i]_1^{m d_{\sigma}}.
\end{array}
\right. 
\end{equation}
 In order to obtain $G$ invariant functions,  we will again need to sum up some entries of the matrix
 $\stackrel{\circ\circ}{M}$. Actually, we will see that the integrand of
  $\sum_{j=1}^{d_{\sigma}}\stackrel{\circ\circ}{M}_{j,j+d_{\sigma}}$ is  $G$-invariant. We know from  (\ref{cpd}) that, if the multiplicity cannot be reduced, then
$$
\sum_{j=1}^{d_{\sigma}}Q_{j,j+d_{\sigma}}=\sum_{j=1}^{d_{\sigma}}\nabla\phi_j^1 \cdot \nabla\phi_{j}^2-\lambda_0\phi_j^1\phi_{j}^2=0.
$$
From the $G$ invariance of $\sum_{j=1}^{d_{\sigma}}Q_{j,j+d_{\sigma}}$,  it follows that  $\sum_{j=1}^{d_{\sigma}}\frac{\partial}{\partial N}Q_{j,j+d_{\sigma}}$ is  $G$-invariant. From the symmetry of $\stackrel{\circ\circ}{M}_{j,k}$, we obtain  
$$
\int_{\partial\Omega}2\sigma \dot{Q}_{jk}=\int_{\partial\Omega}\sigma(\dot{Q}_{jk}+\dot{Q}_{kj}).
$$
Therefore, to show that the integrand of the expression  $\sum_{j=1}^{d_{\sigma}}\stackrel{\circ\circ}{M}_{j,j+d_{\sigma}}$ is also  $G$-invariant it is enough to show that   $\sum_{j=1}^{d_{\sigma}}\dot{Q}_{jj+d_{\sigma}}+\dot{Q}_{j+d_{\sigma}j}$  is  $G$-invariant. This follows from the fact that  $t \rightarrow\sum_{j=1}^{d_{\sigma}}Q_{j,j+d_{\sigma}}(t)$ is a  $\mathcal{C}^1$ curve in the space of   $G$-invariant functions.

 From the non separability of the eigenvalues, it follows that   $\sum_{j=1}^{d_{\sigma}}\stackrel{\circ\circ}{M}_{j,j+d_{\sigma}}=0$ for any  $G$-invariant $\sigma$ and, therefore  
\begin{equation}\label{csdln}
\sum_{j=1}^{d_{\sigma}}\dot{Q}_{jj+d_{\sigma}}+\dot{Q}_{j+d_{\sigma}j}+ \sigma\frac{\partial}{\partial N}Q_{j+d_{\sigma}j}=0.
\end{equation}
 To simplify the notation, we introduce the bilinear form  ${\cal{Q}}(u,v)=\nabla v\cdot\nabla u-\lambda_0vu$.  Then  (\ref{csdln}) can be rewritten as 
\begin{equation}\label{eqofln}
\sum_{j=1}^{d_{\sigma}}\sigma\frac{\partial}{\partial N}{\cal{Q}}(\phi_j^1,\phi_{j}^2)+{\cal{Q}}(\phi_j^1,\dot{\phi}_{j}^2)+{\cal{Q}}(\phi_j^2,\dot{\phi}_{j}^1)=\sum_{j=1}^{d_{\sigma}}\dot{\lambda}(\phi_j^1\phi_j^2).
\end{equation}

 The solutions  $\dot{\phi}^i_j = \dot{\varphi}_{(i-1)d_{\sigma}+j}$ of  (\ref{eofcln}) as functions of 
 $\sigma$   define a boundary operator which we denote by 
  ${\cal{C}}_j^i(\sigma)$.
  %(we prove in  \ref{OF} that  ${\cal{C}}_J^i$ is well defined ),
 Then,  equation  (\ref{eqofln}) defines  a boundary operator given by 
\begin{equation}\label{ofln1}
\Xi(\sigma)= \sum_{j=1}^{d_{\sigma}}\sigma\frac{\partial}{\partial N}{\cal{Q}}(\phi_j^1,\phi_{j}^2)+{\cal{Q}}(\phi_j^1,{\cal{C}}_j^2(\sigma))+{\cal{Q}}(\phi_j^2,{\cal{C}}_j^1(\sigma))
\end{equation}
where $\sigma$ is a  $G$-invariant function on  $\partial\Omega$.
 From (\ref{eqofln}), it follows that the operator  $\Xi$ is of finite range. A
 necessary condition for this  (theorem \ref{tofln1}) is that 
$$
\sum_{j=1}^{d_{\sigma}}\frac{\partial\phi_{j}^1}{\partial\tau}\frac{\partial\phi_{j}^2}{\partial\tau}=0
$$
 for any  $x \in \partial\Omega$ and $\tau\in T_{x}\partial\Omega$. 
 
 We can repeat the whole process  substituting $\phi^1_j$ by  
 $\phi^1_j\circ g$.
Looking at this relation as the inner product of vectors $v_1=(\frac{\partial \phi_1^1}{\partial\tau},\dots,\frac{\partial \phi_{d_{\sigma}^1}}{\partial\tau}), v_2=(\frac{\partial \phi_1^2}{\partial\tau},\dots,\frac{\partial \phi_{d_{\sigma}^2}}{\partial\tau})$ in   $\mathbb{R}^{d_{\sigma}}$, we have that  $\left\langle A_{\sigma}(g)v_1,v_2\right\rangle=0$ for all
  $g\in G$.  Since $A_\sigma$ are irredutible representation of the $G$, we have  $\frac{\partial\phi_{j}^2}{\partial\tau}=0$ for all  $x\in\partial\Omega$ and  $\tau\in T_{x}(\partial\Omega)$. It follows that  $\nabla\phi^2_j=0$  on  $\partial\Omega$. Therefore, using (\ref{cpd}), 
we obtain  $\sum_{j=1}^{d_{\sigma}}\phi_j^1\phi_{j}^2=0$  on  $\partial\Omega$.
 The process can be repeated again with  $\phi^1_j\circ g$ 
 in the place of  $\phi^1_j $, to obtain  $\phi_j^2=0$ on  $\partial\Omega$.
 Since   $\phi_j^2$ also satisfies  $\frac{\partial\phi_j^2}{\partial N}=0$ on  $\partial\Omega$,   Cauchy  Uniqueness Theorem assures that   $\phi_j^2\equiv0$ on  $\Omega$, which gives the desired contradiction.

\end{proof}
\begin{coro}\label{cp1}
Let  $G$  be a finite subgroup of  $O(n)$ and   $\sigma\in\hat{G}$.  Then the set
 
\[ {\cal{C}}= \{h\in Diff_G^2(\Omega)|  \, \textrm{ the eigenvalues of the problem 
 (\ref{eln}) restricted to  $M_{\sigma}$  are all  $G_{\sigma}$-simple }\} 
 \]
is residual in 
$Diff_G^3(\Omega)$.
%\begin{eqnarray}
%& &{\cal{C}}=\{h\in Diff_G^2(\Omega)|{\mathrm{\,\, all\, the  \, autovalores \, para \,\, o\,\, problema \, (\ref{eln}),\,\, restrito\,\, a}}\nonumber\\
%& & M_{\sigma},\,\,{\mathrm{s\tilde{a}o \,\, G_{\sigma}-simples\}}} \nonumber
%\end{eqnarray} 
\end{coro}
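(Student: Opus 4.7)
The plan is to deduce the corollary from Theorem \ref{tsalnme} by a standard Baire category argument, entirely analogous to the one used at the end of the $\mathbb{Z}_2\oplus\cdots\oplus\mathbb{Z}_2$ case. I would write $\mathcal{C} = \bigcap_{k\geq 1}\mathcal{C}_k$, where
$$
\mathcal{C}_k = \{h\in Diff_G^3(\Omega): \textrm{every eigenvalue } \lambda \leq k \textrm{ of } (\ref{elnp})_h \textrm{ restricted to } M_\sigma \textrm{ is } G_\sigma\textrm{-simple}\}.
$$
Since $Diff_G^3(\Omega)$ inherits a complete metric from $\mathcal{M}_3$ (the $G$-equivariance condition $h\circ g = g\circ h$ being closed in the $\mathcal{C}^3$-topology), it suffices to show each $\mathcal{C}_k$ is open and dense.

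For openness I would invoke Theorem \ref{tcaln}. Given $h_0\in\mathcal{C}_k$, only finitely many eigenvalues of the reference problem restricted to $M_\sigma$ lie in $[0,k+1]$; choose $\varepsilon$ smaller than the minimum gap between consecutive distinct such eigenvalues. Then Theorem \ref{tcaln} guarantees that for any $h$ sufficiently close to $h_0$ in $\mathcal{C}^3$, the perturbed eigenvalues cluster in $\varepsilon$-neighborhoods of the reference ones, with total multiplicity preserved. Because $h$ commutes with $G$, each symmetry space $M_\sigma$ is invariant under the Lagrangean operator, and the construction in Theorem \ref{tecapln} produces the perturbed eigenfunctions as analytic deformations within $M_\sigma$ of the reference eigenfunctions. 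Consequently the restricted eigenspaces carry a $G$-representation which deforms continuously with $h$ and hence stays in the same irreducible isomorphism class, so $G_\sigma$-simplicity is preserved.

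For density, given $h_0 \in Diff_G^3(\Omega)$ and $\varepsilon>0$, I would enumerate the finitely many eigenvalues $\lambda_1,\dots,\lambda_r$ of the reference problem restricted to $M_\sigma$ in $[0,k]$ which fail to be $G_\sigma$-simple, and split them one at a time by iterated applications of Theorem \ref{tsalnme}. At step $i$, one chooses the $G$-equivariant perturbation small enough that, by Theorem \ref{tcaln}, all eigenvalues already made $G_\sigma$-simple in steps $1,\dots,i-1$ remain so. After $r$ steps one obtains $h\in\mathcal{C}_k$ with $\|h-h_0\|_{\mathcal{C}^3}<\varepsilon$. The proof is essentially bookkeeping on top of Theorems \ref{tcaln} and \ref{tsalnme}, so I expect no serious conceptual obstacle; the only delicate point is that successive small perturbations could in principle re-merge previously separated eigenvalues, which is handled by shrinking each perturbation below the minimum spectral gap produced at the previous stage, possible because at each step only finitely many eigenvalues of the restricted problem lie below the cutoff $k$.
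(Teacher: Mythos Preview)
Your proof is correct and follows essentially the same route as the paper: write $\mathcal{C}=\bigcap_k \mathcal{C}_k$, use Theorem~\ref{tcaln} (adapted to the restriction to $M_\sigma$) for openness, and iterate Theorem~\ref{tsalnme} for density. The paper's own proof is considerably terser---it simply asserts that Theorem~\ref{tcaln} adapts to each symmetry space and that Theorem~\ref{tsalnme} gives density---whereas you spell out the bookkeeping (finite spectral gaps, shrinking successive perturbations to avoid re-merging), which is the right level of care.
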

\begin{proof}
 Let 
 $${\cal{C}}_k = \{h\in Diff_G^2(\Omega)|  \, \textrm{ the eigenvalues of the problem 
 (\ref{eln}) restricted to  $M_{\sigma}$  are all  $G_{\sigma}$-simple }\}$$
 
 We prove  that $ {\cal{C}}_k$ is  open and dense and then take intersection for $ k \in \mathbb{N}$. 
To prove openness it is enough to observe that the  proof   of   continuity property of the eigenvalues  given in Theorem  \ref{tcaln} can be easily adapted to show the same properties   for the problem restricted to each symmetry space  $M_{\sigma}$. For the   density part, we can assume  more smoothness and then use  Theorem \ref{tsalnme} above. 

%\begin{eqnarray}
%& & {\cal{C}}_k=\{h\in Diff^3_{G}(\Omega): {\mathrm{os\, autovalores\, \lambda\,\, de\,\, (\ref{eln})\, restrito\,a }}\, M_{\sigma},\nonumber\\
%& & \, \lambda<k,\,{\mathrm{ s\tilde{a}o\, todos\,\, G-simles}} \}.\nonumber
%\end{eqnarray}

\end{proof}

We now consider the second part of conjecture  \ref{conject1} for finite groups. For this step, which involves the separation of eigenvalues in different spaces of symmetry  we will need an additional hypotheses on the dimension of irreducible representations of $G$.
 We start with a technical auxiliary result.

\begin{lema}\label{lsed}
    Let  $M$ be a differentiable manifold and  $F,G:M\longrightarrow\mathbb{R}^2$ differentiable functions.
 If    $|F(x)|=|G(x)|$ and $|\frac{\partial F}{\partial\tau}|=|\frac{\partial G}{\partial\tau}|$ for any $\tau\in T_xM$,
 then there exists an open set $V$ in $M$ and an orthogonal transformation   $T$  in  $\mathbb{R}^2$ such that $F(x)=TG(x)$ in  $V$.
\end{lema}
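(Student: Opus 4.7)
My plan is to identify $\mathbb{R}^2$ with $\mathbb{C}$ and reduce the statement to a dichotomy on phase functions. If $F$ vanishes identically then so does $G$ (by $|F|=|G|$) and any $T \in O(2)$ works on any open $V$; otherwise pick $x_0$ with $F(x_0) \neq 0$, hence $G(x_0) \neq 0$, and work in a small simply connected neighborhood $U$ of $x_0$ where both $F$ and $G$ are nowhere zero. On $U$ we choose smooth polar representations $F = |F|\,e^{i\phi_F}$ and $G = |G|\,e^{i\phi_G}$.

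The central computation is the Pythagorean identity $|\partial_\tau F|^2 = (\partial_\tau |F|)^2 + |F|^2(\partial_\tau \phi_F)^2$ for any $\tau \in T_x U$, and likewise for $G$. Because $|F| = |G|$ forces $\partial_\tau |F| = \partial_\tau |G|$, the hypothesis $|\partial_\tau F| = |\partial_\tau G|$ reduces (using $|F| = |G| \neq 0$ on $U$) to the sharp dichotomy $(\partial_\tau \phi_F)^2 = (\partial_\tau \phi_G)^2$, i.e.\ $\partial_\tau \phi_F = \pm\, \partial_\tau \phi_G$ at every $(x,\tau)$.

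Next I will upgrade this pointwise-in-$\tau$ statement to a pointwise-in-$x$ dichotomy. For fixed $x \in U$ the subsets $V_+(x) = \{\tau : \partial_\tau(\phi_F - \phi_G) = 0\}$ and $V_-(x) = \{\tau : \partial_\tau(\phi_F + \phi_G) = 0\}$ are linear subspaces of $T_xU$ whose union is all of $T_xU$. Since a real vector space is never the union of two proper subspaces, one of them must coincide with $T_xU$. Therefore $U = U_+ \cup U_-$, where $U_\pm$ are the closed subsets of $U$ on which $d(\phi_F \mp \phi_G)_x = 0$.

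To extract an open subset I invoke Baire: $U$ is a nonempty open subset of a smooth manifold, hence a Baire space, so at least one of the closed sets $U_+, U_-$ has nonempty interior. Take $V$ to be a connected component of that interior. On $V$ either $\phi_F - \phi_G$ or $\phi_F + \phi_G$ is a constant. In the first case $F = e^{i\theta_0}G$, so $T$ is the rotation by $\theta_0$; in the second $F = e^{i\psi_0}\bar G$, so $T$ is the reflection associated with complex conjugation followed by rotation by $\psi_0$. Either way $T \in O(2)$. The only delicate point is the initial restriction to a domain where the polar representation is well defined; once the Pythagorean identity is in hand, the rest reduces to linear algebra plus a Baire category argument.
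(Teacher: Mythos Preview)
Your proof is correct and takes a genuinely different route from the paper's.

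The paper writes $F = e^{i\theta(x)}G$ with a single phase function $\theta$, assumes $\nabla_M\theta$ does not vanish identically on any open set, and then computes in local coordinates: expanding $\frac{\partial F}{\partial x_i}\frac{\partial\bar F}{\partial x_j}$ and taking real parts yields an equation that is eventually rewritten as $\frac{\partial}{\partial x_k}\bigl(\tfrac{\theta}{2}+\arctan(g_1/g_2)\bigr)=0$, from which $\theta = -2\arctan(g_1/g_2)+C$ and hence $F=e^{iC}\bar G$. Your approach instead introduces separate polar phases $\phi_F,\phi_G$ and uses the Pythagorean identity $|\partial_\tau F|^2 = (\partial_\tau|F|)^2 + |F|^2(\partial_\tau\phi_F)^2$ to reduce the hypothesis directly to $(\partial_\tau\phi_F)^2=(\partial_\tau\phi_G)^2$; the linear-algebra observation that $T_xU$ cannot be the union of two proper subspaces then gives the pointwise dichotomy $d(\phi_F-\phi_G)_x=0$ or $d(\phi_F+\phi_G)_x=0$, and a covering argument produces the open set $V$. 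Your argument is cleaner, coordinate-free, and makes the rotation/reflection dichotomy visible from the outset, whereas the paper's computation recovers the reflection case only after a somewhat opaque trigonometric manipulation. One small remark: invoking Baire is more than you need, since with only two closed sets covering $U$ the complement of one is open and, if nonempty, already lies in the other.
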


\begin{proof}
Using complex notation, we have  $F(x)=e^{i\theta(x)}G(x)$. If   $\theta(x)$ is constant in some open set, we are done. 
  Suppose then  that  $\nabla_{M}\theta(x)$ does not vanish identically in any open subset of $M$..  Choosing local coordinates  $(x_1,...,x_{n-1})$  in $M$  and 
 $\tau=\frac{\partial}{\partial x_i}+\frac{\partial}{\partial x_j}$, it follows from the condition
 $|\frac{\partial F}{\partial\tau}|=|\frac{\partial G}{\partial\tau}|$ that 
 
\begin{equation}\label{prffgg}
 	\begin{array}{ccc }
 		 Re(\frac{\partial F}{\partial x_i}\frac{\partial \stackrel{-}{F}}{\partial x_j}) & = & Re(\frac{\partial G}{\partial x _i}  \frac{\partial \stackrel{-}{G}}{\partial x_j}).
  \end{array}
 \end{equation}

%Como estamos supondo $F(x)=e^{i\theta(x)}G(x)$ temos,
Thus 
\begin{eqnarray}
	     & & \frac{\partial F}{\partial x_i}\frac{\partial \stackrel{-}{F}}{\partial x_j} = \left(i\frac{\partial\theta}{\partial x_i}e^{i\theta}G+e^{i\theta}\frac{\partial G}{\partial x_i}\right)\left(-i\frac{\partial\theta}{\partial x_j}e^{-i\theta}\stackrel{-}{G}+e^{-i\theta}\frac{\partial\stackrel{-}{G}}{\partial x_j}\right)\nonumber\\
& & = \frac{\partial\theta}{\partial x_i}\frac{\partial\theta}{\partial x_j}|G|^2+i\left(\frac{\partial\theta}{\partial x_i}G\frac{\partial\stackrel{-}{G}}{\partial x_j}-\frac{\partial\theta}{\partial x_j}\stackrel{-}{G}\frac{\partial G}{\partial x_i}\right)+\frac{\partial G}{\partial x_i}\frac{\partial\stackrel{-}{G} }{\partial x_j}.  \label{ffgg}
\end{eqnarray}

Now,
	\begin{eqnarray}
		Re\left(\frac{\partial\theta}{\partial x_i}\frac{\partial\theta}{\partial x_j}|G|^2+i\left(\frac{\partial\theta}{\partial x_i}G\frac{\partial}{\partial x_j}\stackrel{-}{G}-\frac{\partial\theta}{\partial x_j}\stackrel{-}{G}\frac{\partial }{\partial x_i}G\right)\right)=\nonumber\\
		\frac{\partial\theta}{\partial x_i}\frac{\partial\theta}{\partial x_j}|G|^2-Im\left(\frac{\partial\theta}{\partial x_i}G\frac{\partial}{\partial x_j}\stackrel{-}{G}-\frac{\partial\theta}{\partial x_j}\stackrel{-}{G}\frac{\partial }{\partial x_i}G\right).\label{prff} 
	\end{eqnarray}
 Writing $G=g_1+ig_2$
$$
G\frac{\partial}{\partial x_j}\stackrel{-}{G}=g_1\frac{\partial g_1}{\partial x_j}+g_2\frac{\partial g_2}{\partial x_j}+i\left(g_2\frac{\partial g_1}{\partial x_j}-g_1\frac{\partial g_2}{\partial x_j}\right)
$$
$$
\stackrel{-}{G}\frac{\partial}{\partial x_i}G=g_1\frac{\partial g_1}{\partial x_i}+g_2\frac{\partial g_2}{\partial x_i}+i\left(g_1\frac{\partial g_2}{\partial x_i}-g_2\frac{\partial g_1}{\partial x_i}\right)
$$
it follows that 
$$
Im\left(\frac{\partial\theta}{\partial x_i}G\frac{\partial}{\partial x_j}\stackrel{-}{G}-\frac{\partial\theta}{\partial x_j}\stackrel{-}{G}\frac{\partial }{\partial x_i}G\right)=\frac{\partial\theta}{\partial x_i}\left(g_2\frac{\partial g_1}{\partial x_j}-g_1\frac{\partial g_2}{\partial x_j}\right)+\frac{\partial\theta}{\partial x_j}\left(g_2\frac{\partial g_1}{\partial x_i}-g_1\frac{\partial g_2}{\partial x_i}\right).
$$
Taking the real part on identity
 (\ref{ffgg})  and using relations (\ref{prffgg}),  (\ref{prff}), we obtain  
\begin{equation}
		\frac{\partial\theta}{\partial x_i}\left(\frac{1}{2}\frac{\partial\theta}{\partial x_j}+ \frac{g_2\frac{\partial g_1}{\partial x_j}-g_1\frac{\partial g_2}{\partial x_j}}{|G|^2} \right)+\frac{\partial\theta}{\partial x_j}\left(\frac{1}{2}\frac{\partial\theta}{\partial x_i}+ \frac{g_2\frac{\partial g_1}{\partial x_i}-g_1\frac{\partial g_2}{\partial x_i}}{|G|^2} \right)=0\nonumber.	
\end{equation}
    Since we are assuming $\nabla_{\partial\Omega}\theta$ does not vanish in any open set, the same follows for   $g_2$.
   Thus    the above  equation can be rewritten as
\begin{equation}\label{aux1}
			\frac{\partial\theta}{\partial x_i}\left(\frac{1}{2}\frac{\partial\theta}{\partial x_j}+ \frac{\partial}{\partial x_j}\arctan\left(\frac{g_1}{g_2}\right) \right)+\frac{\partial\theta}{\partial x_j}\left(\frac{1}{2}\frac{\partial\theta}{\partial x_i}+ \frac{\partial}{\partial x_i}\arctan\left(\frac{g_1}{g_2}\right) \right)=0.
\end{equation} 
	Using again that $\nabla_{\partial\Omega}\theta$ does not vanish in any open set, at least one component  $\frac{\partial\theta}{\partial x_k}$
has the same property.  Taking  $i=j=k$ em (\ref{aux1}), we obtain 
$$
\frac{\partial}{\partial x_k}\left(\frac{\theta}{2}+\arctan\left(\frac{g_1}{g_2}\right)\right)=0,
$$
 in an open set.  Taking   $i=k$ in  (\ref{aux1}), the same identity follows for any index $j$.  Therefore $\theta=-2\arctan(\frac{g_1}{g_2})+C$ and 

\begin{eqnarray}
		F(x)& = & e^{i\left(-2\arctan(\frac{g_1}{g_2})+C\right)}G(x)=e^{iC}\frac{G(x)}{\left(e^{i\arctan(\frac{g_1}{g_2})}\right)^2}\nonumber\\
		    & = & e^{iC}\frac{G(x)}{\left(\frac{G(x)}{|G(x)|}\right)^2}=e^{iC}\stackrel{-}{G}(x). \nonumber
\end{eqnarray}
 Therefore, the orthogonal transformation  $T$ is given by 
$T=
	\left[\begin{array}{cc}
	       	\cos C & -\sin C\\
	      	\sin C & \cos C	      	        
	      \end{array}
	\right]
	\left[\begin{array}{cc}
	       	1 & 0\\
	      	0 &-1	      	        
	      \end{array}
	\right].
$

\end{proof}

\begin{teo}\label{tslned}
	Let   $G$ be a finite subgroup of $O(n)$ such that $d_{\sigma} \leq 2$ for any 
 $\sigma \in \hat{G}$ 
 and  $\Omega\subset\mathbb{R}^n$ an open  bounded connected   $\mathcal{C}^3$-regular and  $G$-symmetric domain.
  Suppose   $\lambda$ is the unique  eigenvalue for the problem  (\ref{eln}) restricted to the  symmetry spaces  $M_{\sigma_1}$  and  $M_{\sigma_2}$ in the interval  $(\lambda-\delta,\lambda+\delta)$. Suppose also that 
  the action of  $G$  in both  $ker(\Delta|_{M_{\sigma_1}}+\lambda)$  and   $ker(\Delta|_{M_{\sigma_2}}+\lambda)$ is
 irreducible.  Then, for any  $\epsilon>0$, there exists  $h\in Diff_{G}^{3}(\Omega)$, $||h-i_{\Omega}||_{\mathcal{C}^3}<\epsilon$ 
 and  $\delta>0$ such that there are exactly two  $\lambda_1(h), \lambda_2(h)$ $G$-simple eigenvalues 
  for the problem (\ref{elnp}) restricted to the space  $M_{\sigma_1}\oplus M_{\sigma_2}$ in the interval  $(\lambda-\delta,\lambda+\delta)$.  In other words, the natural action of  $G$ on  $ker(h^*\Delta h^{*-1}|_{M_{\sigma_2}\oplus M_{\sigma_1}}+\lambda_1(h))$ and  $ker(h^*\Delta h^{*-1}|_{M_{\sigma_2}\oplus M_{\sigma_1}}+\lambda_2(h))$ is irreducible.
\end{teo}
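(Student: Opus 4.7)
\textbf{Plan for Theorem \ref{tslned}.}

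The proof will follow the scheme of Theorem \ref{tsalnme}, but now Schur's lemma applied between two non-isomorphic irreducible $G$-modules drives the first-order reduction, and the dimension hypothesis $d_{\sigma_i}\leq 2$ enters at the very end through Lemma \ref{lsed}.

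Fix orthonormal bases $\{\phi_j\}_{j=1}^{d_{\sigma_1}}$ of $K_1 := \ker(\Delta|_{M_{\sigma_1}}+\lambda)$ and $\{\psi_k\}_{k=1}^{d_{\sigma_2}}$ of $K_2 := \ker(\Delta|_{M_{\sigma_2}}+\lambda)$, transforming by matrix irreducible representations as in (\ref{invariancian}). For an analytic curve $h(t,\cdot)\in Diff^3_G(\Omega)$ with $G$-invariant normal velocity $\sigma = V\cdot N$, the symmetric bilinear form
\[B_\sigma(u,v) = \int_{\partial\Omega}\sigma(\nabla u\cdot\nabla v - \lambda\,uv)\]
is $G$-invariant, because each $g\in G$ is orthogonal and preserves both $\sigma$ and $\partial\Omega$. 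Since $K_1\not\cong K_2$ as $G$-modules, Schur's lemma forces $B_\sigma|_{K_1\times K_2}\equiv 0$ and each $B_\sigma|_{K_i}$ to be a scalar $a(\sigma)I_{d_{\sigma_1}}$ or $c(\sigma)I_{d_{\sigma_2}}$ of the invariant inner product. Hence the matrix $\stackrel{\circ}{M}$ of Corollary \ref{ed} is block-diagonal scalar, and the eigenvalue splits at first order as soon as $a(\sigma)\neq c(\sigma)$ for some $G$-invariant $\sigma$; in this case each resulting piece is automatically $G$-simple by the irreducibility hypothesis on $K_1,K_2$.

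Assume, for contradiction, that $a(\sigma)=c(\sigma)$ for every $G$-invariant $\sigma$. The functions $\Phi_1 := \sum_j(|\nabla_{\partial\Omega}\phi_j|^2 - \lambda\phi_j^2)$ and $\Phi_2 := \sum_k(|\nabla_{\partial\Omega}\psi_k|^2 - \lambda\psi_k^2)$ are $G$-invariant (by the orthogonality-of-$A_{\sigma_i}(g)$ argument of Theorem \ref{tsalnme}), so taking traces yields the pointwise identity
\[\frac{\Phi_1}{d_{\sigma_1}}=\frac{\Phi_2}{d_{\sigma_2}}\quad\text{on }\partial\Omega.\]
Because $G$-equivariant perturbations preserve each $M_{\sigma_i}$ and the eigenspace within $M_{\sigma_i}$ is irreducible, the perturbed eigenvalue restricted to $M_{\sigma_i}$ remains a single analytic curve $\lambda_i(t)$ of multiplicity $d_{\sigma_i}$. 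The non-separability hypothesis is then $\lambda_1(t)\equiv\lambda_2(t)$, so in particular $\ddot\lambda_1 = \ddot\lambda_2$ at $t=0$. Applying the Schur argument to $\stackrel{\circ\circ}{M}$ gives $\ddot\lambda_i = \frac{1}{d_{\sigma_i}}\mathrm{tr}(\stackrel{\circ\circ}{M}|_{K_i})$, and after cancelling the terms $\sigma^2\frac{\partial}{\partial N}Q$ and $(\frac{\partial\sigma}{\partial t}+\sigma\frac{\partial\sigma}{\partial N}+H\sigma^2)Q$ via the first-order identity, the remaining second-order condition reads
\[\int_{\partial\Omega}\sigma\left[\frac{1}{d_{\sigma_1}}\sum_j\dot Q^{(1)}_{jj}-\frac{1}{d_{\sigma_2}}\sum_k\dot Q^{(2)}_{kk}\right]=0\]
for all $G$-invariant $\sigma$. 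Since $\dot\phi_j$ and $\dot\psi_k$ solve the auxiliary problem (\ref{eofcln}) and depend linearly on $\sigma$, the bracket is bilinear in $\sigma$; polarising and separating the dependence on $\sigma$ itself from that on $\nabla_{\partial\Omega}\sigma$ produces two independent $G$-invariant pointwise identities, which combined with the first-order identity yield, after rescaling,
\[\sum_j\phi_j^2=\sum_k\psi_k^2,\qquad \sum_j|\nabla_{\partial\Omega}\phi_j|^2=\sum_k|\nabla_{\partial\Omega}\psi_k|^2\quad\text{on }\partial\Omega.\]

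Using $d_{\sigma_i}\leq 2$, view $\vec\phi := (\phi_1,\phi_2)$ and $\vec\psi := (\psi_1,\psi_2)$ (padded with zero if some $d_{\sigma_i}=1$) as maps $\partial\Omega\to\mathbb{R}^2$. The identities above read precisely $|\vec\phi|=|\vec\psi|$ and $|\partial\vec\phi/\partial\tau|=|\partial\vec\psi/\partial\tau|$ for every tangent $\tau\in T_x\partial\Omega$. Lemma \ref{lsed} then furnishes an open $U\subset\partial\Omega$ and an orthogonal $T\in O(2)$ with $\vec\phi=T\vec\psi$ on $U$. For each $i$, $u_i := \phi_i-\sum_k T_{ik}\psi_k$ is an eigenfunction of the Neumann Laplacian in $\Omega$ vanishing on $U$ together with $\partial u_i/\partial N$, so Theorem \ref{tuc} forces $u_i\equiv 0$ in $\Omega$. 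Since $\phi_i\in M_{\sigma_1}$ is $L^2$-orthogonal to $\sum_k T_{ik}\psi_k\in M_{\sigma_2}$, both pieces must vanish identically, contradicting the fact that $\{\phi_j\}$ is a basis of a nonzero eigenspace.

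\textbf{Main obstacle.} The delicate step is the second-order computation that produces \emph{two} independent pointwise identities on $\partial\Omega$. Concretely one must exploit the joint dependence of $\dot\phi_j$ on $\sigma$ and on $\nabla_{\partial\Omega}\sigma$ through (\ref{eofcln}) to polarise the quadratic-in-$\sigma$ functional and isolate the $\sum\phi_j^2$-contribution from the $\sum|\nabla_{\partial\Omega}\phi_j|^2$-contribution, rather than obtain only a linear combination of them. Only this pair of identities triggers both hypotheses of Lemma \ref{lsed}, and this is precisely where the dimension restriction $d_{\sigma_i}\leq 2$ becomes essential.
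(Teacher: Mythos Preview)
Your overall architecture matches the paper's: assume non-separability, extract a first-order pointwise identity on $\partial\Omega$ from $\stackrel{\circ}{M}$, push to second order, feed two pointwise identities into Lemma~\ref{lsed}, and conclude by Cauchy uniqueness (Theorem~\ref{tuc}). The first-order step and the endgame are fine. The gap is precisely the step you yourself flag as the ``main obstacle'', and the mechanism you propose for it does not work.

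The second-order condition you write, $\int_{\partial\Omega}\sigma\big[\tfrac{1}{d_{\sigma_1}}\sum_j\dot Q^{(1)}_{jj}-\tfrac{1}{d_{\sigma_2}}\sum_k\dot Q^{(2)}_{kk}\big]=0$, is \emph{not} of the form $\int_{\partial\Omega}\sigma\,R=0$ with $R$ independent of $\sigma$: the bracket contains $\dot\phi_j$, $\dot\psi_k$, which depend on $\sigma$ through the elliptic boundary value problem (\ref{eofcln}). So the map $\sigma\mapsto$ (integrand) is a \emph{nonlocal} quadratic operator in $\sigma$, and ``polarising and separating the dependence on $\sigma$ from that on $\nabla_{\partial\Omega}\sigma$'' is not a legitimate operation: there is no algebraic way to peel off two local pointwise identities from a nonlocal quadratic form. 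This is exactly why the paper does something different at this point: it shows that the resulting boundary operator $\Phi$ in (\ref{ofln3}) has finite range, and then applies the method of rapidly oscillating test functions $\sigma=\gamma\cos(\omega\theta)$ (Section~\ref{OF}, Theorem~\ref{tofln3}). The leading $\omega$-asymptotics of $\Phi(\gamma\cos(\omega\theta))$, combined with Lemma~\ref{levf}, yield the pointwise identity
\[
\frac{1}{d_{\sigma_1}}\sum_{j}\Big(\frac{\partial\phi^1_j}{\partial\tau}\Big)^2
=\frac{1}{d_{\sigma_2}}\sum_{j}\Big(\frac{\partial\phi^2_j}{\partial\tau}\Big)^2
\quad\text{for every }\tau\in T_x\partial\Omega,
\]
which, together with the first-order identity, gives the $\sum\phi^2$-identity.

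There is a second, related problem. Even if your polarisation argument produced $\sum_j|\nabla_{\partial\Omega}\phi_j|^2=\sum_k|\nabla_{\partial\Omega}\psi_k|^2$, this is only the \emph{trace} over tangential directions and does \emph{not} say that $|\partial\vec\phi/\partial\tau|=|\partial\vec\psi/\partial\tau|$ for each individual $\tau$, which is what Lemma~\ref{lsed} actually requires (and what is genuinely needed when $\dim\partial\Omega\ge 2$). The rapidly-oscillating-functions argument delivers the direction-by-direction identity because one can choose $\nabla_{\partial\Omega}\theta$ to point along any prescribed $\tau$; a polarisation argument on the full gradient cannot. So the missing ingredient is not a detail but the key analytic tool of the proof: Theorem~\ref{tofln3}.
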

\begin{proof} Assume that the eigenvalue  $\lambda$ cannot be separated by small $G$-symmetric perturbations.
 Then the matrix of the first derivatives $\stackrel{\circ}{M}$, given by the Corollary \ref{ed}  must be a multiple of the 
 identity. Thus
\begin{eqnarray}\label{dp22}
& & \int_{\partial\Omega}\sigma(|\nabla\varphi_k|^2-\lambda_0\varphi_k^2)=\int_{\partial\Omega}\sigma(|\nabla\varphi_l|^2-\lambda_0\varphi_l^2)
\end{eqnarray}
 where  $\varphi_j=\phi^1_j$ if  $1\leq j\leq d_{\sigma_1}$, \    $\varphi_j=\phi^2_{j-d_{\sigma_1}}$ if  $d_{\sigma_1}+1\leq j\leq d_{\sigma_1}+d_{\sigma_2}$  and the eigenfunctions   $\{\phi_j^1\}_{j=1}^{d_{\sigma_1}}$ and  $\{\phi_j^2\}_{j=1}^{d_{\sigma_2}}$ satisfy  (\ref{invariancian}).
  As in the proof of Theorem \ref{tsalnme}, we build the $G$-invariant functions
\begin{eqnarray}
& & \sum_{j=1}^{d_{\sigma_1}}|\nabla\varphi_j|^2-\lambda(\varphi_j)^2= \sum_{j=1}^{d_{\sigma_1}}|\nabla\phi^1_j|^2-\lambda(\phi^1_j)^2, \nonumber\\
& & \sum_{j=1+d_{\sigma_1}}^{d_{\sigma_1}+d_{\sigma_2}}|\nabla\varphi_j|^2-\lambda(\varphi_j)^2=\sum_{j=1}^{d_{\sigma_2}}|\nabla\phi^2_j|^2-\lambda(\phi^2_j)^2.\nonumber
\end{eqnarray}
 It then  follows from  (\ref{dp22}) that
\begin{equation}\label{rpd}
\frac{1}{d_{\sigma_1}}\sum_{j=1}^{d_{\sigma_1}}|\nabla\phi^1_j|^2-\lambda(\phi^1_j)^2=\frac{1}{d_{\sigma_2}} \sum_{j=1}^{d_{\sigma_2}}|\nabla\phi^2_j|^2-\lambda(\phi^2_j)^2.
\end{equation}
Since we still cannot find a  contradiction, we proceed by computing the second derivative. Arguing   as in Theorem  \ref{tsalnme}, 
 we conclude that the boundary operator  
\begin{eqnarray}\label{ofln3}
\Phi(\sigma) & = & \frac{1}{d_{\sigma_1}}\sum_{j=1}^{d_{\sigma_1}}\sigma\frac{\partial}{\partial N}{\cal{Q}}(\phi^1_k,\phi^1_{k})-2({\cal{Q}}(\phi^1_k,{\cal{C}}_k^1(\sigma))\nonumber\\
             &   &-\frac{1}{d_{\sigma_2}}\sum_{j=1}^{d_{\sigma_2}}\sigma\frac{\partial}{\partial N}{\cal{Q}}(\phi^2_k,\phi^2_{k})-2({\cal{Q}}(\phi^2_k,{\cal{C}}_k^2(\sigma))
\end{eqnarray} 
is of finite range. It follows from  Theorem  \ref{tofln3} that 
\begin{equation}\label{rsd}
\frac{1}{d_{\sigma_1}}\sum_{j=1}^{d_{\sigma_1}}\left(\frac{\partial\phi^1_j}{\partial\tau}\right)^2=\frac{1}{d_{\sigma_2}}\sum_{j=1}^{d_{\sigma_2}}\left(\frac{\partial\phi^2_j}{\partial\tau}\right)^2
\end{equation}
for any  $\tau\in T_{x}(\partial\Omega)$.
 Thus $ \frac{1}{d_{\sigma_1}}  |\nabla\phi^1_j|^2 =
  \frac{1}{d_{\sigma_2}}  |\nabla\phi^2_j|^2 $.

% Let  $\{\tau_i\}_{i=1}^{n-1}$ be an   orthonormal basis for  $T_{x}(\partial\Omega)$. Then
%   $\nabla\phi^i_j=\sum_{k=1}^{n-1}\left(\frac{\partial\phi^i_j}{\partial\tau_k}\right)\tau_k$, thus |\nabla\phi^i_j|^2=\sum_{k=1}^{n-1}\left(\frac{\partial\phi^i_j}{\partial\tau_k}\right)^2$. Substituting this expression in  (\ref{rpd}), we obtain
%$$
%\frac{1}{d_{\sigma_1}}\sum_{j=1}^{d_{\sigma_1}}\sum_{k=1}^{n-1}\left(\frac{\partial\phi^1_j}{\partial\tau_k}\right)^2-\lambda(\partial\phi^1_j)^2=\frac{1}{d_{\sigma_2}} \sum_{j=1}^{d_{\sigma_2}}\sum_{k=1}^{n-1}\left(\frac{\partial\phi^2_j}{\partial\tau_k}\right)^2-\lambda(\phi^2_j)^2. $$

Using   (\ref{rpd}), it follows that  
\begin{equation}
\label{p1}
\frac{1}{d_{\sigma_1}}\sum_{j=1}^{d_{\sigma_1}}(\phi^1_j)^2=\frac{1}{d_{\sigma_2}}\sum_{j=1}^{d_{\sigma_2}}(\phi^2_j)^2.
\end{equation}
 Now, if   $d_{\sigma_i}=2$ for $i=1,2$, define
$$
F(x)=(\phi^1_1,\cdots, \phi^1_{d_{\sigma_1}})= (\phi^1_1,\phi^1_2),
$$ 

$$
G(x)=(\phi^2_1, \cdots,\phi^2_{d_{\sigma_2}})= (\phi^2_1,\phi^2_2) .
$$
 If one of the	 $d_{\sigma_i}$ is equal to   $1$ we just put the two coordinates equal to
  $\phi_{1}^i$.

It follows from  (\ref{p1}) and  (\ref{rsd})  that  $|F|=|G|$ e $|\frac{\partial F}{\partial\tau}|=|\frac{\partial G}{\partial\tau}|$. and then, from Lemma \ref{lsed}, there is   an orthogonal transformation $T$ such that
 $F(x)=TG(x)$ in an  open set  $V$  on  $\partial\Omega$. Thus we have, in particular  $\phi_1^1=\alpha\phi^2_1+\beta\phi_2^2$ on  $V$ and 
	\begin{equation*}
				\left\{
							\begin{array}{llll}
									(\Delta+\lambda)(\phi^1_i-\alpha\phi^2_1-\beta\phi_2^2)=0 &\ \ in \ \ \Omega;\\
									\frac{\partial}{\partial N}(\phi^1_i-\alpha\phi^2_1-\beta\phi_2^2)=0 &\  \ on \ \ \partial\Omega;\\
									\phi^1_i-\alpha_i\phi^2_1-\beta_i\phi_2^2=0 & \ \                    on\ \ V\cap\partial\Omega .
						  \end{array}\right.
	\end{equation*}
	From  Cauchy  uniqueness theorem \ref{tuc}, $\phi^1_i=\alpha\phi^2_1+\beta\phi_2^2$ in  $\Omega$, which is a contradiction since
   $ M_{d_{\sigma_1}} \cap  M_{d_{\sigma_2}} = 0 $.

\end{proof}

\begin{coro}\label{tgaaed1ln}
If  $G$ is a finite subgroup of  $O(n)$  such that $d_{\sigma}\leq2$ for all   $\sigma\in\hat{G}$,
 then, for a residual set set of open  bounded connected   $\mathcal{C}^3$-regular and  $G$-symmetric domains
 the eigenvalues of the problem (\ref{eln}) are all $G$-simple. 
\end{coro}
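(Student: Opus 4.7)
The plan is to combine the two parts of Conjecture~\ref{conject1} that we have already established for the class of groups in question. Part (I), namely $G_\sigma$-simplicity inside each symmetry space $M_\sigma$, is Corollary~\ref{cp1} and holds for arbitrary finite $G$. Part (II), the non-crossing of eigenvalues belonging to distinct symmetry spaces $M_{\sigma_1}$ and $M_{\sigma_2}$, is the content of Theorem~\ref{tslned}, which is exactly where the hypothesis $d_\sigma\le 2$ gets used. The corollary should follow from a standard Baire-category argument built on these two ingredients.

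More precisely, I would fix a reference region $\Omega$ and, for each positive integer $k$, define
\[
\mathcal{C}_k=\{h\in \mathrm{Diff}^3_G(\Omega)\mid \text{every eigenvalue }\lambda\le k\text{ of }(\ref{elnp})\text{ is }G\text{-simple}\}.
\]
The openness of $\mathcal{C}_k$ follows from the continuity of the eigenvalues under $\mathcal{C}^2$-perturbations proved in Theorem~\ref{tcaln}, applied separately in each symmetry space (as in the proof of Corollary~\ref{cp1}): if at $h_0$ the finitely many eigenvalues below $k$ are $G$-simple, then $G$-simplicity persists on a $\mathcal{C}^3$-neighborhood because the number of eigenvalues in a small interval around each $\lambda\le k$ stays equal to $\dim\ker(\Delta+\lambda)\cap M_\sigma$, and that dimension equals $d_\sigma$ by $G_\sigma$-simplicity. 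Residuality then comes from intersecting over $k\in\mathbb{N}$, using that $\mathrm{Diff}^3_G(\Omega)$ is a complete metric space (in the Micheletti sense, restricted to $G$-equivariant embeddings).

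For the density of $\mathcal{C}_k$, I would proceed in two stages. Given $h_0\in\mathrm{Diff}^3_G(\Omega)$ and $\varepsilon>0$, first apply Corollary~\ref{cp1} to find $h_1$ with $\|h_1-h_0\|_{\mathcal{C}^3}<\varepsilon/2$ such that every eigenvalue $\lambda\le k+1$ of the Neumann Laplacian on $h_1(\Omega)$ is $G_\sigma$-simple in each symmetry space $M_\sigma$ it meets. After this step, the only remaining obstruction to $G$-simplicity is the coincidence of eigenvalues living in different symmetry spaces. Since below $k+1$ there are only finitely many eigenvalues and only finitely many symmetry classes $\sigma\in\hat G$ (because $G$ is finite), there are only finitely many coincidence pairs $(\sigma_1,\sigma_2)$ to resolve. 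For each such pair I apply Theorem~\ref{tslned}, which uses precisely the hypothesis $d_\sigma\le 2$ to produce an arbitrarily small $G$-symmetric perturbation that splits the two eigenvalues while keeping each one $G_\sigma$-simple in its own space. A finite iteration of such perturbations, each taken small enough so as not to undo the previous splittings (openness of the $G_\sigma$-simplicity condition guarantees this room), delivers $h_2$ with $\|h_2-h_0\|_{\mathcal{C}^3}<\varepsilon$ belonging to $\mathcal{C}_k$.

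The main technical point to watch is the interaction between the two kinds of perturbations: after separating two coincident eigenvalues from $M_{\sigma_1}$ and $M_{\sigma_2}$, a subsequent perturbation designed to separate another pair could in principle create a new coincidence. The remedy is the quantitative stability furnished by Theorem~\ref{tcaln}: at each stage the eigenvalues one has already separated sit in disjoint small intervals, and by choosing each subsequent correction sufficiently $\mathcal{C}^3$-small the corresponding eigenvalue spectra move by less than half the minimum gap, so all previously achieved splittings (and the $G_\sigma$-simplicity within each $M_\sigma$) are preserved. With openness, density, and countable intersection in hand, $\bigcap_k \mathcal{C}_k$ is the desired residual set of $G$-symmetric regions on which all Neumann eigenvalues are $G$-simple.
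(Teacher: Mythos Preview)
Your proposal is correct and follows essentially the same route as the paper: define the sets $\mathcal{C}_k$ of embeddings for which all eigenvalues below $k$ are $G$-simple, obtain openness from Theorem~\ref{tcaln}, density from the splitting results, and take the countable intersection. The paper's proof is extremely terse (it just says ``completely analogous to Corollary~\ref{cp1}, using Theorem~\ref{tslned} instead of Theorem~\ref{tsalnme}''), whereas you make explicit what is implicit there---namely the two-stage density argument in which one first arranges $G_\sigma$-simplicity within every $M_\sigma$ via Corollary~\ref{cp1} (needed because Theorem~\ref{tslned} \emph{assumes} irreducibility in each $M_{\sigma_i}$), and then iteratively separates coincidences across distinct symmetry spaces, using quantitative stability to ensure that later corrections do not destroy earlier splittings.
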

\begin{proof}
Let
$$ {\cal{C}}_k=\{h\in Diff^2_{G}(\Omega) \, | \, \textrm{ all eigenvalues} \  \lambda \textrm{ of } (\ref{eln})  \textrm{ with }
 \,  \lambda < k \textrm{ are all } G-\textrm{simple} \}.$$
 It is enough to prove that ${\cal{C}}_k$ is open and dense. 
 The proof is completely analogous to the one of Corollary \ref{cp1}, using Theorem 
 \ref{tslned}  instead of Theorem \ref{tsalnme}.
\end{proof}

\begin{obs} \label{planarsimple}
  The results above give a complete answer in the particular case of 
   compact subgroups of the $O(2)$.  In fact, in this case, the irreducible representations must have dimension at most 2. This is well known, and also follows from corollary \ref{amr}, since the eigenvalues of  the  Neumann Laplacian (for example in the disk of $\mathbb{R}^2$) have multiplicity 1 or 2.  Thus, Corollary \ref{tgaaed1ln} applies in the case of finite groups. In the infinite case,  the only invariant subgroups are  $SO(2)$ and  $O(2)$ itself. But then  the only invariant regions  are the disks, for which the result is well known.
     \end{obs}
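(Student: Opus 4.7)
The plan is to verify the three ingredients invoked in the remark and assemble them into a complete justification, separating the finite and infinite cases.

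First I would handle the dimension bound on irreducible representations of any compact subgroup $G\le O(2)$. The standard approach is via the classification of compact subgroups of $O(2)$: the only possibilities are the finite cyclic groups $\mathbb{Z}_k$, the finite dihedral groups $D_k$, the full rotation group $SO(2)$, and $O(2)$ itself. Since $O(2)$ acts on $\mathbb{R}^2$ by a (real) orthogonal representation of dimension $2$, every $G\le O(2)$ inherits a $2$-dimensional faithful orthogonal representation, and the irreducible representations are classified explicitly (one-dimensional characters, plus the $2$-dimensional rotation/reflection blocks for $\mathbb{Z}_k$ with $k\ge 3$, $D_k$ with $k\ge 3$, $SO(2)$ and $O(2)$). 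Alternatively, one can invoke Corollary \ref{amr} together with the explicit spectrum of the Neumann Laplacian on the disk $B\subset\mathbb{R}^2$, whose eigenvalues have multiplicity at most $2$: by Theorem \ref{tam} (applied to the disk, which has free points under the action of any proper subgroup of $O(2)$ leaving it invariant) each $\sigma\in\hat G$ gives rise to eigenvalues whose multiplicity is a multiple of $d_\sigma$, forcing $d_\sigma\le 2$.

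Having established $d_\sigma\le 2$ for every $\sigma\in\hat G$, the case of a finite $G\le O(2)$ is immediate: Corollary \ref{tgaaed1ln} applies verbatim and yields a residual set of bounded, connected, $\mathcal{C}^3$-regular, $G$-symmetric regions in which all eigenvalues of the Neumann Laplacian are $G$-simple.

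For the remaining infinite compact subgroups, I would first argue that $SO(2)$ and $O(2)$ are the only options, which follows because any closed infinite subgroup of the compact Lie group $O(2)$ must contain $SO(2)$. Next I would observe that if $\Omega$ is a bounded, connected, $G$-invariant region for $G\supseteq SO(2)$, then $\Omega$ is invariant under all rotations about the origin; hence $\Omega$ is a (possibly punctured) disk or annulus, and in the $\mathcal{C}^2$-regular bounded connected category the only options are open disks centered at $0$ and open annuli. For these, the separation of variables in polar coordinates gives the spectrum explicitly in terms of derivatives of Bessel functions, and the decomposition of $L^2(\Omega)$ into symmetry spaces $M_\sigma$ coincides with the Fourier decomposition in the angular variable: each eigenspace is precisely one $M_\sigma$, and the representation of $G$ on it is automatically irreducible (the rotational characters in the $SO(2)$ case, and the two-dimensional representations paired by reflection in the $O(2)$ case). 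Thus $G$-simplicity holds for every such $\Omega$, making the residual set in question the whole (one- or two-parameter) family. The main obstacle is not any single one of these steps but cleanly reconciling the statement of Conjecture \ref{conject1}, which asks for a residual set inside a (possibly very small) space of invariant regions, with the degenerate situation in which the space of invariant $\mathcal{C}^2$-regions is essentially finite-dimensional; here I would just note that any residual set in a nonempty complete metric space is nonempty, and that in these degenerate cases the conclusion in fact holds on the entire space.
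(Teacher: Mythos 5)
Your argument follows the same route as the paper's remark: bound $d_\sigma$ by $2$ for every compact $G\le O(2)$ (either via the explicit classification into $\mathbb{Z}_k$, $D_k$, $SO(2)$, $O(2)$, or via Corollary \ref{amr} together with Theorem \ref{tam} applied to the disk), invoke Corollary \ref{tgaaed1ln} for finite $G$, and treat the two infinite closed subgroups $SO(2)$ and $O(2)$ by hand. The genuine addition in your write-up is the observation that a bounded, connected, rotation-invariant $\mathcal{C}^2$-region need not be a disk: annuli also occur, and the paper's remark silently omits them. On that point, however, you overclaim. For the disk, the assertion that each eigenspace is exactly one $M_\sigma$ amounts to saying that the positive zeros of $J_n'$ and $J_m'$ are distinct for $n\neq m$ (the Neumann analogue of Bourget's hypothesis, which Siegel established for the $J_n$ themselves); that is what ``well known'' is standing in for. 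For an annulus, the eigenvalues in angular mode $n$ are roots of a transcendental equation in the two radii, and it is not true without argument that distinct modes never share an eigenvalue for \emph{every} choice of radii; the honest statement is genericity within the two-parameter family of annuli (which is all that a residual set requires, and which can be obtained, e.g., by analyticity of the mode-$n$ eigenvalues in the radii). With that correction --- replace ``$G$-simplicity holds for every such $\Omega$'' by ``for a residual set of annuli'' --- your proof is complete and, modulo the extra care with annuli, identical in strategy to the paper's.
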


%\begin{coro} \label{planarsimple}
%If  $G$ is compact subgroup $O(2)$, então para um conjunto residual de regiões $\Omega$ do $\mathbb{R}^2$, abertas, limitadas $\mathcal{C}^2$-regulares e $G$-simétricas, todos os autovalores para o problema \ref{eln} são $G$-simples.
%\end{coro}
%
%\begin{proof}
%Basta mostrarmos que $O(2)$ não possui subgrupos com dimensão de representação irredutível maior que 2. Com efeito, sabemos que todos os autovalores para a equação de Laplace com condição de fornteira de Neumann são simples ou duplos. Pelo corolário \ref{amr} temos que a multiplicidade dos autovalores é um múltiplo das representações irredutíveis de $G$ , portanto devemos ter que $d_{\sigma}<3$ para todo $G$ finito. Além disso , os únicos subgrupos compactos de $O(2)$ são o $SO(2)$ e o próprio $O(2)$. Portanto  segue o resultado. 
%\end{proof}

 The next result shows that the eigenvalues associated to subspaces 
  $M_{\sigma}$ with  $d_{\sigma}=1$  are generically simple, that is, they can be separated from the eigenvalues in other symmetry spaces. In particular,   generically in the set of $G$-symmetric regions there is an  infinite number of simple eigenvalues for the
   Neumann Laplacian.

	\begin{teo}\label{tsanaed1}
	
	 Let   $G$ be a finite subgroup of  $O(n)$ 
 and  $\Omega\subset\mathbb{R}^n$ an open  bounded connected   $\mathcal{C}^3$-regular and  $G$-symmetric domain.
  Suppose  that  $d_{\sigma_1}=1$  and  $\lambda$ is the unique  eigenvalue for the problem  (\ref{eln}) restricted to the  symmetry spaces  $M_{\sigma_1}$  and  $M_{\sigma_2}$ in the interval  $(\lambda-\delta,\lambda+\delta)$. Suppose also that 
  the action of  $G$  in both  $ker(\Delta|_{M_{\sigma_1}}+\lambda)$  and   $ker(\Delta|_{M_{\sigma_2}}+\lambda)$ is
 irreducible.  Then $\lambda$  can be separated by small $G-$ symmetric perturbations of $\Omega$ in two eigenvalues one of which is simple. More precisely,  for any  $\epsilon>0$, there exists  $h\in Diff_{G}^{3}(\Omega)$, $||h-i_{\Omega}||_{\mathcal{C}^3}<\epsilon$ 
 and  $\delta>0$ such that  there are exactly two  eigenvalues  $\lambda_1(h), \lambda_2(h)$ 
  for the problem (\ref{elnp}) restricted to the space  $M_{\sigma_1}\oplus M_{\sigma_2}$ in the interval  $(\lambda-\delta,\lambda+\delta)$, with $\lambda_1(h)$ simple. In other words, the natural action of  $G$ on  $ker(h^*\Delta h^{*-1}|_{M_{\sigma_2}\oplus M_{\sigma_1}}+\lambda_1(h))$ and  $ker(h^*\Delta h^{*-1}|_{M_{\sigma_2}\oplus M_{\sigma_1}}+\lambda_2(h))$ is irreducible.
	 
	     \footnote{It is important to observe that from the fact the the action of $G$ on  $Ker(\Delta|_{M_{\sigma_1}}+\lambda)$
	      is simple it does not follow that the action in   $Ker(\Delta+\lambda)$ is also simple.}
	\end{teo}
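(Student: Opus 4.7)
The plan is to follow the contradiction scheme of Theorem \ref{tslned}, but to replace its appeal to the planar Lemma \ref{lsed} by an argument tailored to the asymmetric case $d_{\sigma_1}=1$. Suppose the eigenvalue $\lambda$ cannot be separated by any small $G$-symmetric perturbation. For an analytic curve $t\mapsto h(t,\cdot)\in\mathrm{Diff}^3_G(\Omega)$ with normal velocity $\sigma=V\cdot N$ (an arbitrary $G$-invariant function on $\partial\Omega$), Theorem \ref{tecapln} produces analytic curves of eigenvalues bifurcating from $\lambda$ whose derivatives at $t=0$ are the eigenvalues of the symmetric $(1+d_{\sigma_2})\times(1+d_{\sigma_2})$ matrix $\stackrel{\circ}{M}$ of Corollary \ref{ed}; non-separability forces $\stackrel{\circ}{M}$ to be a scalar multiple of the identity. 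Organizing an orthonormal basis as $\{\phi^1_1\}\cup\{\phi^2_j\}_{j=1}^{d_{\sigma_2}}$, with the $\phi^2_j$ transforming through an irreducible matrix representation $A_{\sigma_2}$, and averaging the equality of diagonal entries of $\stackrel{\circ}{M}$ over all $G$-invariant $\sigma$ as in the proof of Theorem \ref{tsalnme}, yields the first-order identity
\[
|\nabla\phi^1_1|^2 - \lambda(\phi^1_1)^2 \;=\; \frac{1}{d_{\sigma_2}}\sum_{j=1}^{d_{\sigma_2}}\bigl(|\nabla\phi^2_j|^2 - \lambda(\phi^2_j)^2\bigr)\qquad\text{on }\partial\Omega.
\]

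Since this does not yet give a contradiction, I pass to the second derivative. Arguing exactly as in Theorem \ref{tslned}, non-separability at second order forces the boundary operator
\[
\Phi(\sigma)\;=\;\sigma\tfrac{\partial}{\partial N}{\cal Q}(\phi^1_1,\phi^1_1)-2{\cal Q}(\phi^1_1,{\cal C}^1_1(\sigma))\;-\;\frac{1}{d_{\sigma_2}}\sum_{j=1}^{d_{\sigma_2}}\Bigl[\sigma\tfrac{\partial}{\partial N}{\cal Q}(\phi^2_j,\phi^2_j)-2{\cal Q}(\phi^2_j,{\cal C}^2_j(\sigma))\Bigr]
\]
to be of finite range, and Theorem \ref{tofln3} then provides
\[
(\partial_\tau\phi^1_1)^2\;=\;\frac{1}{d_{\sigma_2}}\sum_{j=1}^{d_{\sigma_2}}(\partial_\tau\phi^2_j)^2,\qquad x\in\partial\Omega,\ \tau\in T_x\partial\Omega.
\]
Summing over an orthonormal tangent frame and using $\partial_N\phi^i_j=0$ to identify $|\nabla\phi|^2$ with $\sum_\tau(\partial_\tau\phi)^2$ on $\partial\Omega$, then subtracting the first-order identity, gives $\lambda\bigl((\phi^1_1)^2-\tfrac{1}{d_{\sigma_2}}\sum_j(\phi^2_j)^2\bigr)=0$ on $\partial\Omega$. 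Connectedness of $\Omega$ and the fact that the combined multiplicity exceeds $1$ ensure $\lambda>0$, so $(\phi^1_1)^2=\tfrac{1}{d_{\sigma_2}}\sum_j(\phi^2_j)^2$ on $\partial\Omega$.

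Rather than invoking Lemma \ref{lsed} (which handles only $\mathbb{R}^2$-valued targets), I exploit the asymmetry $d_{\sigma_1}=1$ directly. Set $\Psi=\tfrac{1}{\sqrt{d_{\sigma_2}}}(\phi^2_1,\dots,\phi^2_{d_{\sigma_2}})\colon\partial\Omega\to\mathbb{R}^{d_{\sigma_2}}$, so that $(\phi^1_1)^2=|\Psi|^2$ and $(\partial_\tau\phi^1_1)^2=|\partial_\tau\Psi|^2$. If $\phi^1_1\equiv 0$ on $\partial\Omega$, Theorem \ref{tuc} gives $\phi^1_1\equiv 0$ in $\Omega$, impossible; so pick a connected open $V\subset\partial\Omega$ on which $\phi^1_1$, and hence $\Psi$, never vanishes. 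There $\partial_\tau|\Psi|=(\Psi\cdot\partial_\tau\Psi)/|\Psi|$, so the identity $(\partial_\tau|\Psi|)^2=|\partial_\tau\Psi|^2$ is the equality case of Cauchy--Schwarz $(\Psi\cdot\partial_\tau\Psi)^2\leq|\Psi|^2|\partial_\tau\Psi|^2$, forcing $\partial_\tau\Psi\parallel\Psi$ for every tangent $\tau$. Hence the unit direction $\hat\Psi=\Psi/|\Psi|$ has zero tangential derivative and, $V$ being connected, is a fixed vector $e_0\in\mathbb{R}^{d_{\sigma_2}}$; consequently $\phi^2_j|_V = \sqrt{d_{\sigma_2}}\,e_{0,j}\,|\phi^1_1|$.

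To close, pick any nonzero $c\in\mathbb{R}^{d_{\sigma_2}}$ with $c\perp e_0$, which exists whenever $d_{\sigma_2}\geq 2$: the Neumann eigenfunction $\sum_j c_j\phi^2_j$ then vanishes on the open set $V\subset\partial\Omega$, and Theorem \ref{tuc} forces it to vanish throughout $\Omega$, contradicting the linear independence of $\{\phi^2_j\}_{j=1}^{d_{\sigma_2}}$. When $d_{\sigma_2}=1$ the same argument gives $\phi^1_1=\pm\phi^2_1$ on $V$, and Cauchy uniqueness places $\phi^1_1\in\mathrm{span}(\phi^2_1)\subset M_{\sigma_2}$, contradicting $M_{\sigma_1}\cap M_{\sigma_2}=\{0\}$ since $\sigma_1\neq\sigma_2$. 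The main obstacle, and the substantive departure from Theorem \ref{tslned}, is the equality-in-Cauchy--Schwarz step of the previous paragraph: it is exactly the hypothesis $d_{\sigma_1}=1$ that collapses the left-hand map to a scalar, letting us identify $|\Psi|$ with $|\phi^1_1|$ and conclude that $\Psi$ is collinear with a fixed vector, a conclusion that no longer requires the planar framework of Lemma \ref{lsed}.
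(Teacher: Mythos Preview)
Your proof is correct and follows essentially the same route as the paper: the same contradiction setup, the same first- and second-derivative identities via Corollary~\ref{ed} and Theorem~\ref{tofln3}, and the same reduction to showing that the unit direction of $(\phi^2_1,\dots,\phi^2_{d_{\sigma_2}})$ is locally constant on $\partial\Omega$, followed by Cauchy uniqueness. The only cosmetic difference is in that last geometric step: the paper writes $G(x)=\phi^1_1\,A(x)\vec{1}$ with $A(x)$ orthogonal and differentiates to show $A(x)\vec{1}$ is constant, whereas you reach the same conclusion via the equality case of Cauchy--Schwarz for $\Psi$ and $\partial_\tau\Psi$; these are the same computation in different clothing, and your phrasing is arguably the cleaner of the two.
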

\begin{proof}  Assuming that the eigenvalues cannot be separated and following the arguments in the proof of Theorem  \ref{tslned}, we obtain the  functions in $\mathbb{R}^{d_{\sigma_2}}$ 
$$
F(x)=\phi_1^1(1,.....,1)
$$
$$
G(x)=(\phi_1^2,....,\phi^2_{d_{\sigma_2}})
$$
satisfying the relations 
\begin{equation*}
\left\langle G(x),G(x)\right\rangle=\left\langle F(x),F(x)\right\rangle =d_{\sigma_2}(\phi_1^1)^2
\end{equation*}
  and  
\begin{equation}\label{idfv}
 \left\langle \frac{\partial G}{\partial\tau}(x),\frac{\partial G}{\partial\tau}(x)\right\rangle= \left\langle \frac{\partial F}{\partial\tau}(x),\frac{\partial   F}{\partial\tau}(x)\right\rangle=d_{\sigma_2}\left(\frac{\partial \phi_1^1}{\partial \tau}\right)^2,
\end{equation}
for any $x\in\partial\Omega$, and $\tau \in T_x(\partial \Omega)$  Denoting $(1,1,...,1)=\stackrel{\rightarrow}{1}$, we can write  
$$
F(x)=\phi_1^1 A(x)\stackrel{\rightarrow}{1},
$$
where $A(x)$ is an orthogonal linear transformation  $F$. Differentiating, we obtain
\begin{eqnarray}
& & \frac{\partial F}{\partial x_i}=\frac{\partial\phi_1^1}{\partial x_i}A(x)\stackrel{\rightarrow}{1}+\phi_1^1\frac{\partial }{\partial x_i}A(x)\stackrel{\rightarrow}{1}\nonumber 
\end{eqnarray}
 It follows from (\ref{idfv})  that
\begin{eqnarray}
& & 2\frac{\partial\phi_1^1}{\partial x_i}\phi_1^1\left\langle A(x)\stackrel{\rightarrow}{1},\frac{\partial }{\partial x_i}A(x)\stackrel{\rightarrow}{1}\right\rangle+(\phi_1^1)^2\Big|\frac{\partial }{\partial x_i}A(x)\stackrel{\rightarrow}{1}\Big|^2 =0.\nonumber
\end{eqnarray}
Note  that, since   $\left\langle A(x)\stackrel{\rightarrow}{1},A(x)\stackrel{\rightarrow}{1} \right\rangle =\left\langle\stackrel{\rightarrow}{1},\stackrel{\rightarrow}{1}\right\rangle$, it follows that
$$
(\phi_1^1)^2\Big|\frac{\partial }{\partial x_i}A(x)\stackrel{\rightarrow}{1}\Big|^2 =0,
$$
for  $i=1,2,...,n-1$.
 Since $\phi_1^1\neq0$ in a dense set of  $\partial\Omega$, it follows that  $\nabla_{\partial\Omega}(A(x)\stackrel{\rightarrow}{1})=0$ and, therefore $A(x)\stackrel{\rightarrow}{1}$ is constant $\partial\Omega$. This implies that $\phi_j^1=a_j\phi_1^2$  on $\partial\Omega$ which cannot occur, since  $\phi_j^1\notin M_{\sigma_2}$.
\end{proof}

\begin{coro}\label{ias}
		Suppose that  $G$  is a finite subgroup of  $O(n)$ and $d_{\sigma}=1$. Then, for a residual set set of open  bounded connected   $\mathcal{C}^3$-regular,   $G$-symmetric domains
 the eigenvalues of the problem (\ref{eln})  in  the symmetry space $M_{\sigma}$ are simple. 

\end{coro}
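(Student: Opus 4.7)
The plan is to show that, for each positive integer $k$, the set
\[
{\cal C}_k = \{h \in \mathrm{Diff}^3_G(\Omega)\mid \text{every eigenvalue } \lambda < k \text{ of } (\ref{elnp})_h \text{ whose eigenspace meets } M_\sigma \text{ nontrivially is simple}\}
\]
is open and dense in $\mathrm{Diff}^3_G(\Omega)$; the result then follows by intersecting over $k$ and invoking the Baire Category Theorem, exactly as in Corollary \ref{cp1} and Corollary \ref{tgaaed1ln}. The key observation is that because $d_\sigma = 1$, a $G_\sigma$-simple eigenvalue in $M_\sigma$ has a one-dimensional intersection with $M_\sigma$, so the only remaining obstruction to simplicity in the full spectrum is a coincidence with an eigenvalue in another symmetry space $M_{\sigma'}$.

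\emph{Openness.} This is a routine adaptation of Theorem \ref{tcaln} restricted to each symmetry space: only finitely many eigenvalues are relevant below the cut-off $k$, each moves continuously under a $C^2$-small perturbation of $h$, the simplicity within $M_\sigma$ is preserved, and the gaps separating the $M_\sigma$-branches from eigenvalues in the other $M_{\sigma'}$ are preserved for small enough perturbations.

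\emph{Density.} Given $h_0$ and $\varepsilon > 0$, I would proceed in two stages. First, by Corollary \ref{cp1}, the set of $h$ for which all eigenvalues of $(\ref{elnp})_h$ restricted to $M_\sigma$ below $k$ are $G_\sigma$-simple is residual; pick such an $h_1$ within $\varepsilon/2$ of $h_0$. Since $d_\sigma = 1$, each relevant eigenspace intersects $M_\sigma$ in exactly a one-dimensional subspace. Second, enumerate the (finitely many, since $G$ is finite and hence $\hat G$ is finite, and since only finitely many eigenvalues lie below $k$) coincidences of such an eigenvalue $\lambda$ with some eigenvalue in a space $M_{\sigma'}$, $\sigma' \neq \sigma$. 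For each such coincidence, after a preliminary application of Corollary \ref{cp1} inside $M_{\sigma'}$ to ensure $G$ acts irreducibly on $\ker(\Delta|_{M_{\sigma'}} + \lambda)$, Theorem \ref{tsanaed1} (applied with $\sigma_1 = \sigma$, $\sigma_2 = \sigma'$, so that $d_{\sigma_1} = 1$) furnishes an arbitrarily small $G$-equivariant perturbation splitting $\lambda$ into two eigenvalues, the one in $M_\sigma$ being simple.

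The main (and really only) difficulty is the bookkeeping: each perturbation removing one coincidence could, a priori, create new ones among previously separated branches. I would handle this by treating the coincidences one at a time, choosing at the $j$-th step a perturbation small enough (relative to the minimum spectral gap produced after step $j-1$) that, by the openness part, every simplicity and separation already obtained is preserved. Since only finitely many steps are required to reach $h \in {\cal C}_k$, one can arrange $\|h - h_0\|_{\mathcal{C}^3} < \varepsilon$, which proves density.
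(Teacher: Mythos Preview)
Your proposal is correct and follows essentially the same approach as the paper: openness from Theorem~\ref{tcaln} and density from Theorem~\ref{tsanaed1}, assembled into a Baire-category argument via the truncated sets ${\cal C}_k$. Your version is in fact more carefully written than the paper's own proof---you make explicit the preliminary use of Corollary~\ref{cp1} to arrange irreducibility in each $M_{\sigma'}$ before invoking Theorem~\ref{tsanaed1}, and you spell out the finite bookkeeping needed to remove coincidences one at a time without destroying earlier separations---whereas the paper simply cites the two theorems and leaves these details to the reader.
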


\begin{proof} 
Let
 \[ {\cal{C}}= \{h\in Diff_G^2(\Omega)|  \, \textrm{ the eigenvalues of the problem 
 (\ref{eln}) with eigenfunctions in }    M_{\sigma}  \textrm{ are all }  G_{\sigma}-\textrm{simple }\} 
 \]
 Openness follows from Theorem  \ref{tcaln} and density from Theorem
  \ref{tsanaed1} above.

\end{proof}

\section{Boundary operators and the method of rapidly oscillating functions}\label{OF}

 We show  here how   the ``Method of rapidly oscillating functions'', developed in
\cite{hp} can be used   to obtain  necessary conditions for the operators   $\Xi$,
% $\Pi$
  and  $\Phi$, defined in  (\ref{ofln1}),
%(\ref{ofln2})
and  (\ref{ofln3}) 
to be of finite range. 
 We start with an auxiliary result.
\begin{lema}\label{levf}
Suppose  $S$  is  a  $\mathcal{C}^1$ manifold; $A$ and $B\in L^2(S)$ with  compact
 support; $\theta$ is a  $\mathcal{C}^1$ real valued function on   $S$ with  $\nabla_{\partial\Omega}\theta\neq0$ in the union of the supports of  $A$ and  $B$; $E$ is a finite dimensional
 subspace of $L^{2}(S)$ and  $u(\omega)\in E$ for all large $\omega\in\mathbb{R}$ satisfying
$$
u(\omega)=A\cos(\omega\theta)+B\sin(\omega\theta)+o(1)\,\, \textrm{ in }
 \,\, L^2(S) 
$$
as  $\omega\rightarrow\infty$. Then $A=B=0$.
\end{lema}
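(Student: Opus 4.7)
The plan is to combine a Riemann--Lebesgue type argument with the finite dimensionality of $E$. Since $E$ is finite dimensional, weak and strong $L^{2}$-convergence coincide on $E$; on the other hand, the rapidly oscillating terms $A\cos(\omega\theta)$ and $B\sin(\omega\theta)$ should converge weakly to $0$. Combining these with the standard asymptotic
$$\|A\cos(\omega\theta)+B\sin(\omega\theta)\|_{L^{2}(S)}^{2}\;\longrightarrow\;\tfrac{1}{2}\bigl(\|A\|_{L^{2}}^{2}+\|B\|_{L^{2}}^{2}\bigr)$$
as $\omega\to\infty$ will force $\|A\|_{L^{2}}=\|B\|_{L^{2}}=0$, which is the claim.

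First I would establish weak $L^{2}$-convergence to zero of $A\cos(\omega\theta)$ and $B\sin(\omega\theta)$. Pick a test function $\psi\in L^{2}(S)$; by density it suffices to treat $\psi\in \mathcal{C}^{1}_{0}(S)$. Using a finite partition of unity subordinated to a cover of $\mathrm{supp}\,A\cup \mathrm{supp}\,B$, I reduce to the case where $\theta$ may be completed to a local $\mathcal{C}^{1}$ coordinate system $(\theta, y)$; this is possible precisely because $\nabla_{S}\theta\neq 0$ on those supports. In such coordinates $\int_{S}\psi A\cos(\omega\theta)\,dS$ becomes an integral of the form $\int F(\theta,y)\cos(\omega\theta)\,d\theta\,dy$ with $F$ compactly supported and in $L^{1}$, so the classical Riemann--Lebesgue lemma applied in the $\theta$ variable gives vanishing in the limit; the same argument handles the sine term and also the oscillations $\cos(2\omega\theta)$, $\sin(2\omega\theta)$ that arise below.

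By hypothesis $u(\omega)\in E$ and $u(\omega)-A\cos(\omega\theta)-B\sin(\omega\theta)\to 0$ in $L^{2}(S)$, so the previous paragraph yields $u(\omega)\rightharpoonup 0$ weakly in $L^{2}(S)$. Since $E$ is finite dimensional, this weak limit is in fact strong, hence $A\cos(\omega\theta)+B\sin(\omega\theta)\to 0$ in $L^{2}(S)$. Expanding the square and using $2\cos^{2}=1+\cos(2\omega\theta)$, $2\sin^{2}=1-\cos(2\omega\theta)$, $2\sin\cos=\sin(2\omega\theta)$,
$$\|A\cos(\omega\theta)+B\sin(\omega\theta)\|_{L^{2}}^{2}=\int_{S}\frac{A^{2}+B^{2}}{2}+\frac{A^{2}-B^{2}}{2}\cos(2\omega\theta)+AB\sin(2\omega\theta)\,dS,$$
and the last two integrals tend to zero by the Riemann--Lebesgue step applied to the compactly supported $L^{1}$ functions $\tfrac{1}{2}(A^{2}-B^{2})$ and $AB$ (note $A^{2}, B^{2}, AB\in L^{1}$ since $A,B\in L^{2}$). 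Passing to the limit yields $\|A\|_{L^{2}}^{2}+\|B\|_{L^{2}}^{2}=0$, so $A=B=0$ in $L^{2}(S)$.

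The main technical point is the Riemann--Lebesgue step on the manifold $S$: the nonvanishing of $\nabla_{S}\theta$ on the supports of $A$ and $B$ is what permits $\theta$ to serve as a coordinate and rules out stationary phase contributions, while outside those supports the integrands vanish identically so there is nothing to check. Once this localization is in place, every remaining step is a routine manipulation.
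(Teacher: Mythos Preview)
Your argument is correct. The paper does not actually prove this lemma but simply refers the reader to Henry's monograph \cite{hp}; your Riemann--Lebesgue plus finite-dimensionality argument is the standard route and is almost certainly what Henry does. The one place worth a word of care is the localization step: completing $\theta$ to a local $\mathcal{C}^{1}$ coordinate system near a point where $\nabla_{S}\theta\neq 0$ is justified by the $\mathcal{C}^{1}$ rank theorem, and after the change of variables the classical one-variable Riemann--Lebesgue lemma applies to the compactly supported $L^{1}$ integrands $\psi A$, $\tfrac{1}{2}(A^{2}-B^{2})$ and $AB$ exactly as you indicate.
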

\begin{proof}
  See  \cite{hp}.

\end{proof}

 We do the computations in detail for the operator $\Xi$;
 the computations for  
 % $\Pi$  
  $\Phi$ are completely analogous. 

 Recall that    $\Xi$ was    defined 
 in  (\ref{ofln1}) by
\begin{equation}\label{oxi}
 \Xi(\sigma)= \sum_{j=1}^{d_{\sigma}}\sigma\frac{\partial}{\partial N}{\cal{Q}}(\phi_j^1,\phi_{j}^2) + {\cal{Q}}(\phi_j^1,{\cal{C}}_j^2(\sigma))+ {\cal{Q}}(\phi_j^2,{\cal{C}}_j^1(\sigma))
\end{equation}

where  ${\cal{C}}_j^i$  are the solutions  $\dot{\phi}^i_j = \dot{\varphi}_{(i-1)d_{\sigma}+j}$ of  (\ref{eofcln}) as functions of 
 $\sigma$,

%\begin{eqnarray}
%& & M_j^i(\sigma)=\nabla_{\partial\Omega}\phi_j^i\cdot\nabla_{\partial\Omega}\sigma-\sigma\frac{\partial^2\phi_j^ i}{\partial N^2} \label{dom}
%& & e\nonumber\\
% \end{eqnarray}
 
\begin{eqnarray}
& & {\cal{Q}}(u,v)=\nabla v\cdot\nabla u-\lambda vu. \label{dq}
\end{eqnarray}
 We will show that

$$
\Xi(\gamma \cos(\omega\theta))=\omega\gamma \cos(\omega\theta)\sum_{j=1}^{d_{\sigma}}\frac{\partial\phi_j^1}{\partial\theta}\frac{\partial\phi_j^2}{\partial\theta} + O(\omega)
$$
as  $\omega\rightarrow\infty$. Here
 $\frac{\partial }{\partial\theta}=   
 \nabla_{\partial \Omega} \theta \cdot \nabla_{\partial \Omega}    $ 
% $\frac{\partial\phi_j^i}{\partial\theta}= \nabla_{\partial \Omega} \phi_j^i  \nabla_{\partial \Omega} \theta   $ 
is the derivative in the
direction of $\nabla_{\partial \Omega} \theta$.
 If  $\Xi$ is supposed to be of finite rank, we conclude from Lemma  \ref{levf}, that 
$$
\sum_{j=1}^{d_{\sigma}}\frac{\partial\phi_j^1}{\partial\theta}\frac{\partial\phi_j^2}{\partial\theta}=0 \,\,  \textrm{on} \,\, \partial\Omega.
$$

Following the method presented in \cite{hp} we search first formal solutions $u=e^{\omega S(x)}\sum_{k\geq 0}\frac{U_k(x)}{(2\omega)^k}$ of

\begin{equation*}
\left\{\begin{array}{lccc}
       (\Delta+\lambda)u=(2\omega)F  & \ \ in \ \ \Omega;\\
       \frac{\partial u}{\partial N}=2\omega G(x) &\ \  on \ \ \partial\Omega;  
       \end{array}
\right.
\end{equation*}
where $F(x)=e^{\omega S(x)}\sum_{k\geq 0}\frac{F_k(x)}{(2\omega)^k}$, $G(x)=e^{i\theta(x)}\sum_{k\geq 0}\frac{G_k(x)}{(2\omega)^k}$, $\theta|_{\partial\Omega}$ given, $S|_{\partial\Omega}=i\theta$, $Re\frac{\partial S}{\partial N}|_{\partial \Omega}>0$ and $F_k, G_k$ are smooth  functions with values in $\mathbb{C}$.

We choose  the complex-valued $S$ so $\nabla S\cdot\ \nabla S=0$ on a neighborhood of $\partial\Omega$ and the $U_k$ inductively, solving 
\begin{equation*}
\left\{\begin{array}{lccc}
       \Lambda U_k+(\Delta+\lambda)U_{k-1}=F_k  & \ \ in \ \ \Omega;\\
       \frac{\partial U_{k-1}}{\partial N}+\frac{1}{2}\frac{\partial S}{\partial N}U_k=G_k &\ \  on \ \ \partial\Omega;  
       \end{array}
\right.
\end{equation*}
with $U_{-1}=0$, where $\Lambda=\nabla S\cdot\nabla +\frac{1}{2}\Delta S$. 
 They are not ordinarily,  exact solutions, but we only need that $\nabla S\cdot\nabla S$ and the $\Lambda U_k + (\Delta+\lambda)U_{k-1}-F_k$ tend to zero rapidly as $x\rightarrow\partial\Omega$, which is shown in   \cite{hp} (for the Dirichlet case, but the argument also applies here).

Using the notation above, we have 
$$
{\cal{C}}_j^i(\sigma) = e^{i\omega\theta}U^{i,j}_0+ O(1).
$$
Thus
\begin{eqnarray}
(\nabla \phi_j^1\cdot\nabla -\lambda \phi_j^1){\cal{C}}_j^2(\sigma) & = & \nabla_{\partial\Omega}\phi_j^1\cdot\nabla_{\partial\Omega}(e^{i\omega\theta}U^{2,j}_0)-\lambda e^{i\omega\theta}U^{2,j}_0\phi_j^1 \nonumber\\
                                                                & = & e^{i\omega\theta}\left\{i\frac{\partial\phi_j^1}{\partial\theta}U^{2,j}_0\omega+{\cal{Q}}(\phi_j^1,U^{2,j}_0) \right\}\nonumber\\
                                                                & = & ie^{i\omega\theta}\frac{\partial\phi_j^1}{\partial\theta}U^{2,j}_0\omega+O(1)\nonumber,
\end{eqnarray}
that is 
\begin{equation*}
{\cal{Q}}(\phi_j^1,{\cal{C}}_j^2(\sigma))=i\omega e^{i\omega\theta}\frac{\partial\phi_j^1}{\partial\theta}U^{2,j}_0\omega+O(1).
\end{equation*}
Analogously
\begin{equation*}
{\cal{Q}}(\phi_j^2,{\cal{C}}_j^1(\sigma))=i\omega e^{i\omega\theta}\frac{\partial\phi_j^2}{\partial\theta}U^{1,j}_0\omega+O(1).
\end{equation*}
Therefore
\begin{eqnarray}
\Xi(\gamma e^{i\omega\theta}) & = & e^{i\omega\theta}\left\{\sum_{j=1}^{d_{\sigma}}-i\omega\left(\frac{\partial\phi_j^1}{\partial\theta}U^{2,j}_0+\frac{\partial\phi_j^2}{\partial\theta}U^{1,j}_0\right)+\gamma\frac{\partial}{\partial N}{\cal{Q}}(\phi_j^1,\phi_j^2)\right\}\nonumber.
\end{eqnarray}
We want to determine the term 
 $U^{i,j}_0$ in the formal solution. To this end, using the notation
  $ M_j^i(\sigma)=\nabla_{\partial\Omega}\phi_j^i\cdot\nabla_{\partial\Omega}\sigma-\sigma\frac{\partial^2\phi_j^ i}{\partial N^2}$, we write 
$$M_j^i(\gamma e^{i\omega\theta})
  =2\omega e^{i\omega\theta}\sum_{k\geq0}\frac{G_k}{(2\omega)^k}.
$$ 
We have

\begin{eqnarray}
M_j^i(\gamma e^{i\omega\theta}) & = & \nabla_{\partial\Omega}(\gamma e^{i\omega\theta})\cdot\nabla_{\partial\Omega}\phi^i_j-\gamma e^{i\omega\theta}\frac{\partial^2 \phi^i_j}{\partial N^2}\nonumber\\
                                & = & e^{i\omega\theta}\left(\nabla_{\partial\Omega}\gamma\cdot\nabla_{\partial\Omega}\phi^i_j+\omega i\gamma\frac{\partial\phi_j^i}{\partial\theta}-\gamma \frac{\partial^2 \phi^i_j}{\partial N^2}\phi^i_j\right)\nonumber\\ 
                                & = & 2\omega e^{i\omega\theta}\left(i\gamma\frac{1}{2}\frac{\partial\phi_j^i}{\partial\theta}+\frac{1}{2\omega}M_j^ i(\gamma)\right)\nonumber.
\end{eqnarray}
Therefore 
$$
G_0=i\gamma\frac{1}{2}\frac{\partial\phi_j^i}{\partial\theta},
\,\,G_1= M_j^ i(\gamma),
$$
and then  $U_0^{i,j}= i\gamma e^{i\omega\theta}\frac{1}{4}\frac{\partial\phi_j^i}{\partial\theta}$. Therefore
\begin{eqnarray}
\Xi(\gamma e^{i\omega\theta}) & = & \gamma e^{i\omega\theta}\omega\sum_{j=1}^{d_{\sigma}}\frac{\partial\phi_j^1}{\partial\theta}\frac{\partial\phi_j^2}{\partial\theta}+O(1)\nonumber.\nonumber
\end{eqnarray}
Observing that
$$
\Xi(\gamma \cos(\omega\theta))=\frac{1}{2}Re\left\{\Xi(\gamma e^{i\omega\theta})+\Xi(\gamma e^{-i\omega\theta})\right\},
$$
it follows that
\begin{eqnarray}
\Xi(\gamma \cos(\omega\theta)) & = & \omega \gamma \cos(\omega\theta)\sum_{j=1}^{d_{\sigma}}\frac{\partial\phi_j^1}{\partial\theta}\frac{\partial\phi_j^2}{\partial\theta}+O(1)\nonumber.
\end{eqnarray}
If  $\Xi$ is of finite range we obtain, from Lemma \ref{levf}
\begin{equation*}
\sum_{j=1}^{d_{\sigma}}\frac{\partial\phi_j^1}{\partial\theta}\frac{\partial\phi_j^2}{\partial\theta}=0\,\,em\,\, \partial\Omega.
\end{equation*}

%Os próximos dois lemas são fundamentais para podermos concluir que a propriedade acima é válida para todo $\tau\bot T_x(G(x))$, em particular se o grupo $G$ for finito a identidade é verdadeira para $\tau\in\partial\Omega$.
%\begin{lema}\label{gfi}
%Sejam $G$ subgrupo compacto de $O(n)$ tal que existe $x\in\mathbb{R}^n$ livre pela ação de $G$; $\Omega\subset\mathbb{R}^n$, aberto, limitado com fronteira regular e $G$-invariante. Se $u:\partial\Omega\rightarrow\mathbb{R}$ é suave e $G$-invariante então $\nabla_{\partial\Omega}u\bot G(x)$.
%\end{lema}
%\begin{proof}
%Seja $\gamma: \mathbb{R}\rightarrow G$ suave tal que $\frac{d}{dt}\gamma|_{t=0}x=\tau\in T_{x}(G(x))$. Note que $u(\gamma(t)x)=u(x)$, então
%\begin{eqnarray}
%\nabla_{\partial\Omega}u\cdot\tau & = & \frac{d}{dt}|_{t=0}u(\gamma(t)x)\nonumber\\
%                                  & = & \frac{d}{dt}|_{t=0}u(x)\nonumber\\
%                                  & = &0\nonumber
%\end{eqnarray}
%\end{proof}
%\begin{lema}\label{vtf}
%Seja $V_{\epsilon}$ é uma vizinhança do espaço normal a $G(x)$ no ponto $x\in\Omega$, $\Omega$ com a mesmas condições que no lema acima. Dada uma função contínua em $V_{\epsilon}$ é sempre possível estendê-la a uma função $G$-invariante em $\Omega$. Como consequência $\nabla u$ pode ser qualquer vetor normal a $G(x)$. 
%\end{lema}
%\begin{proof}
%Ver \cite{as}.
%\end{proof}

\begin{teo}\label{tofln1}
Let  $G$ be a compact subgroup  of  $O(n)$;   $\Omega$ an open, bounded,  connected
 $\mathcal{C}^3$-regular and e $G$-symmetric region. Suppose the natural action of $G$ on 
${\partial \Omega}$ has a free point $x$ and 
 $\{\phi_j^i\}_{j=1}^{d_{\sigma}}$, $i=1,2$
 are eigenfunctions for the  problem (\ref{eln}) belonging to the symmetry space $M_{\sigma}$, satisfying
$$
\sum_{j=1}^{d_{\sigma}}{\cal{Q}}(\phi_j^1,\phi_j^2)=0
$$
on $\partial\Omega$, where $\cal{Q}$ was given in  (\ref{dq}). If  the operator
 $\Xi$  given in  (\ref{oxi}) is of finite range, then 
\begin{equation*}
\sum_{j=1}^{d_{\sigma}}\frac{\partial\phi_j^1}{\partial\tau}\frac{\partial\phi_j^2}{\partial\tau}=0
\end{equation*}
 in a neighborhood  $V$ of $x$ in   $\partial\Omega$, for all  $\tau \bot T_x(G(x))$.
 In particular,if $G$ is finite,   this is true for any  $\tau \in T_x(\partial\Omega)$.
\end{teo}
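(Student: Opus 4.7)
The proof will be a direct application of the rapid-oscillation asymptotics carried out just above the theorem statement, together with Lemma \ref{levf}. The only real subtlety is that those calculations were done for an arbitrary boundary function $\sigma$, whereas the finite-range hypothesis on $\Xi$ refers only to $G$-invariant $\sigma$ (since only such $\sigma$ arise from $G$-symmetric perturbations $h \in \mathrm{Diff}^3_G(\Omega)$). The task is therefore to engineer a $G$-invariant oscillating perturbation whose leading-order behavior on a small piece of $\partial\Omega$ agrees with the non-symmetric test function $\gamma\cos(\omega\theta)$ used in Section \ref{OF}.

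The plan is as follows. Fix a free point $x \in \partial\Omega$ and a tangent vector $\tau \in T_x(\partial\Omega)$ with $\tau \bot T_x(G(x))$. Because $x$ is free, there is a slice $V$ through $x$ transverse to the orbit; in the finite-group case we may shrink $V$ so that $V \cap gV = \emptyset$ for every $g \neq e$, and in the general compact case we take $V$ inside a $G$-invariant tubular neighborhood of $G(x)$ on which the slice projection is well defined. Choose $\gamma \in \mathcal{C}^\infty_0(V)$ with $\gamma(x) > 0$ and $\theta \in \mathcal{C}^2(\partial\Omega)$ with $\nabla_{\partial\Omega}\theta(x) = \tau$; in the compact case we additionally require $\theta$ to be constant along orbit directions in the tube, which is possible precisely because $\tau \bot T_x(G(x))$. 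Set
$$\sigma_\omega := \int_G \bigl(\gamma\cos(\omega\theta)\bigr)\circ g^{-1}\, dg,$$
which is $G$-invariant by construction.

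Next, by linearity of $\Xi$ and $G$-equivariance of the auxiliary problem (\ref{eofcln}), $\Xi(\sigma_\omega)$ decomposes as an integral over $G$ of $\Xi$ applied to the individual translates of $\gamma\cos(\omega\theta)$. Each such translate is supported in a disjoint neighborhood of another point of the orbit, so the Henry-type asymptotic of Section \ref{OF} (in which the formal solutions $U_k e^{\omega S(x)}$ concentrate exponentially near the support of their boundary data) shows that on $V$ only the $g = e$ contribution has leading order $\omega$, while all other translates are absorbed in the $O(1)$ remainder. Hence
$$\Xi(\sigma_\omega)\big|_V \;=\; \omega\, \gamma\cos(\omega\theta) \sum_{j=1}^{d_\sigma} \frac{\partial \phi_j^1}{\partial \theta}\frac{\partial \phi_j^2}{\partial \theta} \;+\; O(1).$$
The finite-range hypothesis on $\Xi$ now forces $\omega^{-1}\Xi(\sigma_\omega)|_V$ to lie in a fixed finite-dimensional subspace of $L^2(V)$. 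Applying Lemma \ref{levf} to this family kills the $\cos(\omega\theta)$ coefficient, giving $\gamma \sum_j \frac{\partial\phi_j^1}{\partial\theta}\frac{\partial\phi_j^2}{\partial\theta}\equiv 0$ on $V$. Varying $\gamma$ yields the identity pointwise throughout a neighborhood of $x$, and varying $\tau$ over directions in $T_x(\partial\Omega)$ orthogonal to $T_x(G(x))$ yields the stated conclusion; when $G$ is finite the orbit is discrete and this orthogonality constraint is vacuous, so $\tau$ is arbitrary.

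The main obstacle is justifying that the $g\neq e$ contributions to $\Xi(\sigma_\omega)|_V$ are indeed $O(1)$ rather than $O(\omega)$. This relies on locality of the Henry construction: the amplitudes $U_k e^{\omega S(x)}$ solving the eikonal and transport equations decay exponentially away from the boundary support of their data, so the rapid-oscillation term produced by $(\gamma\cos(\omega\theta))\circ g^{-1}$ cannot leak back into $V$ at order $\omega$. This is essentially the same localization argument used in \cite{hp} in the Dirichlet case, adapted to the Neumann boundary operator $\Xi$ above.
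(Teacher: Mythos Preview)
Your argument is essentially correct, but it is more elaborate than the paper's and carries an extra technical burden that the paper sidesteps entirely.

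The paper's proof is one line: it invokes Lemma~10.3 of \cite{as} to produce \emph{$G$-invariant} phase and amplitude functions $\theta,\gamma$ on $\partial\Omega$ with $\nabla_{\partial\Omega}\theta(x)=\tau$ for any prescribed $\tau\perp T_x(G(x))$. With that choice, $\sigma=\gamma\cos(\omega\theta)$ is already $G$-invariant, so the asymptotic computation preceding the theorem applies directly to $\sigma$ as a single oscillating test function on all of $\partial\Omega$, and Lemma~\ref{levf} yields the conclusion in one stroke. The constraint $\tau\perp T_x(G(x))$ is exactly the obstruction to making $\theta$ $G$-invariant: an invariant $\theta$ is constant along orbits, so its tangential gradient is automatically orthogonal to the orbit direction.

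Your route---take a local $\gamma$ and arbitrary $\theta$, average over $G$, then argue that the $g\neq e$ pieces of $\Xi(\sigma_\omega)$ are $O(1)$ on $V$---reaches the same endpoint but trades the clean citation of an existing lemma for a localization argument that you yourself flag as the ``main obstacle.'' That argument is not wrong (the boundary amplitude $U_0$ vanishes where the Neumann data $G_0$ vanishes, so the leading $O(\omega)$ term of the formal solution is indeed supported in $gV$), but it requires unpacking the transport structure of Henry's construction, whereas the paper's approach needs none of this. In the finite-group case your averaged $\sigma_\omega$ is in fact exactly (up to a factor $1/|G|$) the function the paper builds by $G$-invariant extension, so the two constructions coincide; the difference is only that you then analyze it piecewise rather than globally.

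In short: your proof works, but recognizing that one can choose $\gamma,\theta$ $G$-invariant from the start (rather than symmetrizing after the fact) makes the whole localization discussion unnecessary.
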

\begin{proof}
Taking (\ref{levf}) into account, it remains only to show that $\nabla \theta$ can be any
chosen to be any unit vector  $\tau \bot T_x(G(x))$. But this is guaranteed by
 Lemma 10.3 of   \cite{as}.
\end{proof}

Similar arguments lead to similar results for the operator
%$\Pi$
 $\Phi$ 
% definidos em (\ref{ofln2}) e 
defined in (\ref{ofln3}).
 
%\begin{equation}\label{opi}
%\Pi(\sigma)= \sum_{j=k}^{d_{\sigma}}\sigma\frac{\partial}{\partial N}{\cal{Q}}(\phi_k,\phi_{k})-2{\cal{Q}}(\phi_k,{\cal{C}}(M_k(\sigma)))
%\end{equation}  

\begin{eqnarray}\label{oflncof1}
\Phi(\sigma) & = & \frac{1}{d_{\sigma_1}}\sum_{j=1}^{d_{\sigma_1}}\sigma\frac{\partial}{\partial N}{\cal{Q}}(\phi^1_k,\phi^1_{k})-2({\cal{Q}}(\phi^1_k,{\cal{C}}_k^1(\sigma))\nonumber\\
             &   &-\frac{1}{d_{\sigma_2}}\sum_{j=1}^{d_{\sigma_2}}\sigma\frac{\partial}{\partial N}{\cal{Q}}(\phi^2_k,\phi^2_{k})-2({\cal{Q}}(\phi^2_k,{\cal{C}}_k^2(\sigma)).
\end{eqnarray}

%\begin{teo}\label{tofln2}
%Sejam $G$ subgrupo compacto de $O(n)$ que possui um ponto livre;  $\Omega$ aberto, conexo, limitado, $\mathcal{C}^3$-regular e $G$-simétrica; $\{\phi_j\}_{j=1}^{d_{\sigma}}$, são autofunções para o problema (\ref{eln}), pertencente ao espaço $M_{\sigma}$ ,que satisfazem 
%$$
%\frac{1}{d_{\sigma_1}}\sum_{j=1}^{d_{\sigma}}{\cal{Q}}(\phi_j^1,\phi_j^1)=0
%$$
%em $\partial\Omega$, onde $\cal{Q}$ está definido em (\ref{dq}). Se o operador $\Pi$ definido em (\ref{opi}) é de posto finito, então 
%\begin{equation*}
%\frac{1}{d_{\sigma_1}}\sum_{j=1}^{d_{\sigma_1}}\left(\frac{\partial\phi_j}{\partial\tau}\right)^2=0
%\end{equation*}
%em uma vizinhança $V$ de $\partial\Omega$, para todo $\tau \bot T_x(G(x))$. Em particular, $\tau \in T_x(\partial\Omega)$, se $G$ é finito.
%\end{teo}

\begin{teo}\label{tofln3}
Let  $G$ be a compact subgroup  of  $O(n)$;   $\Omega$ an open, bounded,  connected
 $\mathcal{C}^3$-regular and e $G$-symmetric region. Suppose the natural action of $G$ on 
${\partial \Omega}$ has a free point $x$ and 
 $\{\phi_j^i\}_{j=1}^{d_{\sigma}}$, $i=1,2$
 are eigenfunctions for the  problem (\ref{eln}) belonging to the symmetry space $M_{\sigma}$, satisfying
$$
\frac{1}{d_{\sigma_1}}\sum_{j=1}^{d_{\sigma}}{\cal{Q}}(\phi_j^1,\phi_j^1)=\frac{1}{d_{\sigma_2}}\sum_{j=1}^{d_{\sigma_2}}{\cal{Q}}(\phi_j^2,\phi_j^2)
$$
on  $\partial\Omega$, where $\cal{Q}$ was given  (\ref{dq}). If the operator $\Phi$
 given in  (\ref{oflncof1}) is of finite range, then 
\begin{equation*}
\frac{1}{d_{\sigma_1}}\sum_{j=1}^{d_{\sigma_1}}\left(\frac{\partial\phi_j^1}{\partial\tau}\right)^2=\frac{1}{d_{\sigma_2}}\sum_{j=1}^{d_{\sigma_2}}\left(\frac{\partial\phi_j^2}{\partial\tau}\right)^2
\end{equation*}
 in a neighborhood  $V$ of $x$ in   $\partial\Omega$, for all $\tau \bot T_x(G(x))$.
 In particular, if $G$ is finite this is true for any  $\tau \in T_x(\partial\Omega)$.
\end{teo}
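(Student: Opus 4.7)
The plan is to mimic, essentially line by line, the rapidly-oscillating-function analysis carried out just above for the operator $\Xi$, only this time applied to the operator $\Phi$ of (\ref{oflncof1}). I will substitute $\sigma = \gamma \cos(\omega\theta)$ into $\Phi$, extract the leading order in $\omega$ as $\omega \to \infty$, and then combine the finite range hypothesis with Lemma \ref{levf} to force that leading coefficient to vanish pointwise on a neighbourhood of $x$ in $\partial\Omega$.

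First I would reuse the asymptotic expansion already produced for ${\cal{C}}_j^i(\sigma)$. Since ${\cal{C}}_j^i$ is defined by the same Neumann-type boundary value problem (\ref{eofcln}) regardless of which symmetry spaces $M_{\sigma_1}, M_{\sigma_2}$ are under consideration, we again have
$$
{\cal{C}}_j^i(\gamma e^{i\omega\theta}) = e^{i\omega\theta}\,U_0^{i,j} + O(1), \qquad U_0^{i,j} = \frac{i\gamma}{4}\frac{\partial \phi_j^i}{\partial \theta},
$$
and hence
$$
{\cal{Q}}(\phi_j^i, {\cal{C}}_j^i(\gamma e^{i\omega\theta})) = -\frac{\omega\gamma}{4}\,e^{i\omega\theta}\left(\frac{\partial \phi_j^i}{\partial \theta}\right)^{\!2} + O(1).
$$
The remaining terms $\sigma\,\frac{\partial}{\partial N}{\cal{Q}}(\phi_j^i,\phi_j^i)$ in (\ref{oflncof1}) are only $O(1)$. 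Summing the expansions, absorbing the $1/d_{\sigma_i}$ weights, and taking real parts via $\cos(\omega\theta) = \tfrac{1}{2}(e^{i\omega\theta}+e^{-i\omega\theta})$, the two halves of $\Phi$ combine with opposite signs to yield an expression of the form
$$
\Phi(\gamma \cos(\omega\theta)) = c\,\omega\,\gamma\,\cos(\omega\theta)\left[\frac{1}{d_{\sigma_1}}\sum_{j=1}^{d_{\sigma_1}}\!\left(\frac{\partial \phi_j^1}{\partial\theta}\right)^{\!2} - \frac{1}{d_{\sigma_2}}\sum_{j=1}^{d_{\sigma_2}}\!\left(\frac{\partial \phi_j^2}{\partial\theta}\right)^{\!2}\right] + O(1)
$$
for some nonzero universal constant $c$. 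If $\Phi$ has finite range, Lemma \ref{levf} now forces the bracketed expression to vanish wherever $\gamma$ is supported and $\nabla_{\partial\Omega}\theta$ does not vanish.

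The final step is to realise an arbitrary admissible direction $\tau \bot T_x(G(x))$ as $\nabla_{\partial\Omega}\theta(x)$. Just as in the proof of Theorem \ref{tofln1}, I will invoke Lemma 10.3 of \cite{as} to produce a smooth real $G$-invariant pair $(\gamma, \theta)$ on $\partial\Omega$ with $\gamma$ supported in a small neighbourhood of $x$ (and its $G$-translates) and with $\nabla_{\partial\Omega}\theta(x) = \tau$. The main obstacle I anticipate is exactly this construction of a $G$-symmetric rapidly oscillating test function with prescribed tangential gradient at the free point $x$; it is the freeness of $x$ that allows the cutoff $\gamma$ to be symmetrised over the orbit without destroying the oscillatory factor near $x$, and the orthogonality constraint $\tau \bot T_x(G(x))$ is precisely the price paid for this symmetrisation. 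Once this is in place, the preceding analysis yields the desired identity in a neighbourhood $V$ of $x$. When $G$ is finite the orbit $G(x)$ is zero-dimensional, so $T_x(G(x)) = \{0\}$ and the identity holds for every $\tau \in T_x(\partial \Omega)$, completing the argument.
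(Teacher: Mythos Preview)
Your proposal is correct and follows exactly the route the paper intends: the paper itself says that ``similar arguments lead to similar results for the operator $\Phi$'' and gives no separate proof for Theorem~\ref{tofln3}, so your adaptation of the $\Xi$-computation (reusing the asymptotics of ${\cal C}_j^i$, extracting the $O(\omega)$ coefficient, applying Lemma~\ref{levf}, and invoking Lemma~10.3 of \cite{as} at the free point) is precisely what is meant. The only cosmetic difference is that the paper does not spell out the intermediate constant $c$ or the diagonal form ${\cal Q}(\phi_j^i,{\cal C}_j^i(\sigma))$ explicitly, but your version of these steps is in agreement with the detailed $\Xi$ computation.
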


\vspace{1cm}
\noindent \textsc{Ant\^onio Luiz Pereira} \\
Instituto de Matem\'atica e Estat\'istica - Universidade de S\~ao Paulo, Rua do Mat\~ao, 1010 \\
Cidade Universit\'aria - CEP 05508-090 - S\~ao Paulo - SP - Brazil \\
\texttt{alpereir@ime.usp.br} \\

\vspace{.5cm}
\noindent \textsc{Marcus A. M. Marrocos} \\
Universidade Federal do Amazonas, Instituto de Ci\^encias Exatas-ICE, Departamento de Matem\'atica \\
Av. General Rodrigo Oct\'avio Jord\~ao Ramos, 3000, Campus Universit\'ario Coroado I \\
69077-070 - Manaus, AM - Brasil\\
\texttt{marcusmarrocos@gmail.com}


\begin{thebibliography}{99}

\bibitem{arnold}V.I.Arnol'd, Modes and Quasi-modes, Func. Anal. Appl.,6 (1972), pp. 94-101(translation)

\bibitem{adn}S. Agmon, A. Douglis and Nirenberg, Estimates near the boundary for solutions of
elliptic partial differential equations satisfying general boundary conditions, Comm. Pure
Appl. Math., vol. 12 (1959), 623-727. American Journal Mathematics, vol. 98, No. 04
(1976), 1059-1078.

\bibitem{eb}E. Browder, Estimates and existence theorems for elliptic boundary value problems,
Proc. Nat. Acad. Sci. USA, vol. 45 (1959), 385-372.

\bibitem{zp}B. H. Driscoll, Eigenvalues on a Domain with Discrete Rotational Symmetry, SIAM J. Math. Analysis, v. 18, 941-953 (1987)

\bibitem{hadamard}J. Hadamard (1908), M'emoire sur le probl'eme d'analyse relatif `a l'equilibre
des plaques `elastiques encastr`ees, Ouvres de J. Hadamard 2 ed. C.N.R.S. Paris
(1968).

\bibitem{hp}D.B Henry, {Perturbation of the Boundary in Boundary-Value Problems of Partial Differential Equations, Cambridge University Press,2005}.

\bibitem{he}A. Henrot, Extremum Problems for Eigenvalues of Elliptic Operators, Frontiers
in Mathematics, Birkhäuser Verlag, Basel (2006).

\bibitem{rh}E. Hewit and K. A. Ross, Abstract Harmonic Analysis vol. II, Springer Verlag (1970).

\bibitem{h}L. Hormander, Linear Partial Differential Operators, Springer-Verlag, Grundlehren
116 (1964).

\bibitem{kato}T. Kato, {Perturbation Theory of Linear Operators,Springer-Verlag \\ Grundlehren 132, 1966}.

\bibitem{tl}P. D. Lamberti, A few spectral perturbation problems, Doctoral Dissertation, University of
Padova, Italy, 2002.

\bibitem{lions}J. L. Lions and E. Magenes, Nonhomogeneous Boundary Value Problems and Applications,
vol. 1, Springer-Verlag, New York (1972).

\bibitem{tese.mm}M. A. M. Marrocos, Autovalores de alguns operadores elípticos em regiões simétricas, Instituto de Matemática e Estatistica da Universidade de São Paulo, São Paulo, Brasil (2011) (Tese de Doutorado).

\bibitem{micheletti-eliptico} A. M. Micheletti, {Perturbazione dello spettro dell operatore de Laplace in relazione ad una variazione del campo}, Ann. Scuola Norm. Pisa 26(1972), 151-169.

\bibitem{micheletti-laplace}A. M. Micheletti, {Pertubazione dello spettro di un operatore ellittico di tipo variazionale,
in relazione ad una variazone del campo, Ann. Mat. Pura Appl. 4, 97(1973), 267-281.}, Ann. Mat. Pura Appl. 4,97(1973), 267-281.

\bibitem{micheletti-metrica}A. M. Micheletti, Metrica per famiglie di domini limitati e propriet`a generiche degli
autovalori, Annali della Scuola Norm. Sup. Pisa Ser. II, v. 26 (1972), 683-694.


\bibitem{as}A. L.Pereira, Eigenvalues of the Laplacian on symmetric regions, NoDEA-
Nonlinear Differential Equations and Applications 2 (1995) 63-109.

\bibitem{ta}A. L.Pereira, Autovalores do Laplaciano em regiões simétricas, Instituto de Matemática e Estatistica da Universidade de São Paulo, São Paulo, Brasil (Doctoral Thesis).

\bibitem{msb}A. L. Pereira ; M.C. Pereira, An eigenvalue problem for the biharmonic operator on Z2-symmetric regions. Journal of the London Mathematical Society, v. 122, p. 0.1112/jlms/jdm, 2008


%\bibitem{contin}A. L. Pereira ; M.C. Pereira,  Continuity of attractors for a reaction-diffusion problem with nonlinear boundary conditions with respect to variations of the domain. J. Differential Equations 239 (2007), no. 2, 343–370.

%\bibitem{tm}M.C. Pereira, Perturbação de Contorno para o Problema de Dirichlet para o Bilaplaciano,Instituto de Matemática e Estatistica da Universidade de São Paulo, São Paulo, Brasil (Tese de Doutorado).

%\bibitem{mbss}M. C. Pereira, Generic simplicity of the eigenvalues for a supported plate equation, Journal
of Nonlinear Analysis, vol. 67 (2007), 889-900.


\bibitem{rayleigh}J. W. S. Rayleigh, Theory of Sound, Dover (1945) (second edition of 1894).

\bibitem{Rellich}F. Rellich, Perturbation theory of eigenvalue problems, Gordon and Breach Science Publ., New
York, 1969.

\bibitem{rousselet}B. Rousselet, Shape Design Sensitivity of a Membrane, J. Opt. Theory and Appl.,
40 (1983), 595-623.

\bibitem{teman}Saut and Teman, Generic properties of nonlinear boundary value problems, Comm.
Partial Differential Equations, 4(1979) no. 3, 293-319.

\bibitem{simon}J.Simon, Differentiation with respect to the domain in boundary value problems,
Num. Funct. Anal. Optim.  2 (1980) 649-687.

\bibitem{sokolowski}J. Sokolowski, J.P. Zolesio, Introduction to shape optimization. Shape sensitivity analysis, Springer Ser.
Comput. Math., 16, Springer-Verlag, Berlin, 1992.

\bibitem{z3}M. Tanikawa, The Spectrum of the Laplacian of a $\mathbb{Z}_3$ domain, Proc Japan Acad. 57, Ser. A, 13-18 (1981) 

\bibitem{uhlenbeck}K. Uhlenbeck, {Generic Properties Eigenfunctions}, American Journal Mathematics, vol.98, No. 04(1976), 1059-1078.

\bibitem{wall}C.T.C Wall, Singular Points of Plane Curves, London Mathematical Society Student Texts (2004).


\end{thebibliography}
\end{document}